\crefname{mainresult}{Theorem}{Theorems}
\renewcommand{\PrintDOI}[1]{\doi{#1}}
\let\setminus=\smallsetminus
\renewcommand{\leq}{\leqslant}
\renewcommand{\geq}{\geqslant}
\renewcommand{\ge}{\geq}
\renewcommand{\le}{\leq}
\newtheorem{theorem}{Theorem}[section] 
\newtheorem{proposition}[theorem]{Proposition}
\newtheorem{lemma}[theorem]{Lemma}
\newtheorem{mainresult}{Theorem} 
\theoremstyle{definition}
\newtheorem{example}[theorem]{Example}
\theoremstyle{remark}
\newtheorem*{claim*}{Claim}
\crefname{claim}{Claim}{Claims}
\newenvironment{claimproof}{\noindent\textit{Proof of the claim.}}{\hfill\ensuremath{\blacksquare}\medskip}
\newcommand{\COMMENT}[1]{{}}
\let\theta=\vartheta
\let\rho=\varrho
\let\phi=\varphi
\def\N{\mathbb N}
\def\calCommandfactory#1{%
  \expandafter\def\csname c#1\endcsname{\mathcal{#1}}}
\def\frakCommandfactory#1{%
  \expandafter\def\csname frak#1\endcsname{\mathfrak{#1}}}
\newcounter{ctr}
  \edef\X{\@Alph\c@ctr}
\title{Efficiently distinguishing all tangles \linebreak in locally finite graphs}
\author{Raphael W.\ Jacobs \and Paul Knappe}
\address{Universität Hamburg, Department of Mathematics, Bundesstraße 55 (Geomatikum), 20146 Hamburg, Germany}
\email{\{raphael.jacobs, paul.knappe\}@uni-hamburg.de}
\keywords{Tangles, Tree-Decompositions, Infinite Graphs, Ends}
\subjclass[2020]{05C63 (Primary) 05C83, 05C05, 06A07, 05C40 (Secondary)}
\begin{document}

\begin{abstract}
    While finite graphs have tree-decompositions that efficiently distinguish all their tangles, \hbox{locally} finite graphs with thick ends need not have such tree-decompositions.
    We show that every locally finite graph without thick ends admits such a tree-decomposition, in fact a canonical one. 
    Our proof exhibits a thick end at any obstruction to the existence of such tree-decompositions and builds on new methods for the analysis of the limit behaviour of strictly increasing sequences of separations.
\end{abstract}

\maketitle

\section{Introduction}

In their Graph Minors Project~\cite{GM}, Robertson and Seymour introduced the notion of tangles as a novel way to capture highly cohesive substructures, or `clusters', of a graph.
Tangles capture such clusters indirectly in that they encode the clusters' positions in the graph:
a tangle is an orientation of all the low-order separations of the graph, orienting every such separation towards the side where the cluster is located.
Robertson and Seymour's innovation was to view such `consistent orientations' themselves as abstract clusters,  thus providing a general framework for many concrete cluster types.

One of their key results about tangles is the \emph{tree-of-tangles~theorem}, which asserts that the tangles in a finite graph can be arranged in a tree-like way~\cite{GMX}.
Diestel, Hundertmark and Lemanczyk found such a tree-like arrangement which is even \emph{canonical}, that is, the automorphisms of the graph act naturally on it:

\begin{theorem}[\cite{ProfilesNew}*{Theorem 3}] \label{thm:finiteTDToT}
    Every connected finite graph has a canonical tree-decomposition which efficiently distinguishes all its tangles.
\end{theorem}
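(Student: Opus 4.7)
The plan is to reduce the theorem to producing a canonical \emph{nested} family $N$ of separations that efficiently distinguishes every pair of distinct tangles, and then invoke the standard correspondence between nested sets of separations and tree-decompositions. Concretely, if $N$ is a symmetric nested set of separations of $G$ invariant under $\operatorname{Aut}(G)$, then the splittings of $N$ yield a tree-decomposition $(T,\mathcal{V})$ of $G$ whose adhesion sets are exactly the separators corresponding to $N$, and $\operatorname{Aut}(G)$ acts on $(T,\mathcal{V})$ because it acts on $N$. Hence it suffices to construct $N$ canonically.

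To build $N$, I would first, for every unordered pair $\{\tau,\tau'\}$ of distinct tangles, consider the set $D(\tau,\tau')$ of separations of minimum order that distinguish $\tau$ and $\tau'$; call these \emph{efficient distinguishers}. Since tangles form consistent orientations of all low-order separations, any two tangles are distinguished by some separation, so $D(\tau,\tau') \neq \emptyset$, and $\operatorname{Aut}(G)$ permutes the sets $D(\tau,\tau')$ equivariantly. The key step is to show that one can select one separation from each $D(\tau,\tau')$ in such a way that the resulting family is nested. I would prove this via the standard \emph{uncrossing lemma}: if $s \in D(\tau,\tau')$ and $t \in D(\sigma,\sigma')$ cross, then by submodularity of the order function, two of the corner separations $s \wedge t, s \vee t, s \wedge t^*, s \vee t^*$ have order no larger than $|s|$ and $|t|$ respectively, and the tangle axioms together with the minimality of $|s|$ and $|t|$ force one of these corners to distinguish $\tau$ from $\tau'$ and another to distinguish $\sigma$ from $\sigma'$. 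Replacing $s,t$ by these corners thus yields efficient distinguishers that no longer cross.

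The main obstacle is making this choice \emph{canonically}, since naive iterative uncrossing breaks $\operatorname{Aut}(G)$-invariance. I would follow the Diestel--Hundertmark--Lemanczyk strategy: work not with individual separations but with the full sets $D(\tau,\tau')$ simultaneously, and at each order $k$ extract, by an $\operatorname{Aut}(G)$-invariant rule, those separations in $\bigcup_{\tau \neq \tau'} D(\tau,\tau')$ of order $k$ that are nested with all previously chosen separations and with every other efficient distinguisher whose pair of tangles is not yet separated. Iterating the uncrossing argument above shows that such a choice always exists and indeed covers every pair of tangles, by induction on the order of an efficient distinguisher: whenever some pair $\{\tau,\tau'\}$ remains undistinguished at level $k$, uncrossing any $s \in D(\tau,\tau')$ with the already-fixed nested set produces a distinguisher of order at most $k$ that \emph{is} nested with it, contradicting maximality.

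Finally, having produced the canonical nested set $N$, I would check the two required properties of the resulting tree-decomposition. Efficiency of distinguishing is immediate from the construction, since each pair of tangles is separated by an element of $N$ whose order is minimum over all distinguishing separations. Canonicity follows because $N$ was defined purely in terms of $\operatorname{Aut}(G)$-invariant data (the family of tangles and the order function), so $\operatorname{Aut}(G)$ acts on $N$ and hence on $(T,\mathcal{V})$. Since the whole argument is carried out in a finite graph, termination of the level-by-level construction and existence of the tree-decomposition derived from $N$ are not issues here, but they foreshadow precisely the difficulties that will arise in the locally finite setting treated in the remainder of the paper.
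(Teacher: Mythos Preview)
The paper does not give its own proof of this statement: \cref{thm:finiteTDToT} is simply quoted from \cite{ProfilesNew}*{Theorem~3} as a known result of Diestel, Hundertmark and Lemanczyk, and is used only as motivation in the introduction. There is therefore no proof in the paper to compare your proposal against.

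That said, your sketch points in the right direction but glosses over exactly the step that carries all the weight. The reduction to a canonical nested set and the passage from a nested set to a tree-decomposition are both standard and fine. The uncrossing observation via submodularity is also correct. What is missing is an actual specification of the ``$\operatorname{Aut}(G)$-invariant rule'' for selecting separations at each level. You write that one extracts those efficient distinguishers of order~$k$ that are nested with everything already chosen and with every other still-needed efficient distinguisher, and then claim by an uncrossing/maximality argument that this covers every pair. But you have not defined what is being taken maximal, and the inductive sentence ``uncrossing any $s\in D(\tau,\tau')$ with the already-fixed nested set produces a distinguisher of order at most~$k$ that \emph{is} nested with it'' is not justified: uncrossing $s$ with one separation of $N$ can make the resulting corner cross another separation of $N$, so a single uncrossing step does not suffice, and an iterated uncrossing procedure is not canonical unless you say precisely which corner to take at each stage. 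The genuine content of the DHL proof is a concrete, invariant selection (in their paper, via extremal separations among the efficient distinguishers at each level, shown to be automatically nested with one another and with lower levels), and your sketch does not supply such a mechanism.
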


\noindent Here, we say that a separation of~$G$ \emph{distinguishes} two tangles in~$G$ if they orient that separation differently, and it does so \emph{efficiently} if it has minimum order among all the separations of~$G$ that distinguish these two tangles.
If two tangles are distinguished by some separation of~$G$, they are called \emph{distinguishable}.
Now a tree-decomposition~$(T, \cV)$ of a graph~$G$ \emph{efficiently distinguishes} all the tangles in~$G$ if, for every two distinguishable tangles in~$G$, there exists a separation of~$G$ which is induced by an edge of the decomposition tree~$T$ and efficiently distinguishes these two tangles.

The set of separations induced by a tree-decomposition 
is itself tree-like~\cites{TreelikeSpaces,TreeSets} in that the separations are pairwise nested.
In finite graphs, these two tree-like structures are equivalent, since the converse holds as well: every nested set of separations of a finite graph induces a tree-decompo\-sition~\cite{confing}*{Theorem~4.8} that gives rise to it.
This converse, however, fails for locally finite graphs, potentially infinite graphs in which every vertex has only finitely many neighbours.

The resulting difference between nested sets of separations and tree-decompositions of locally finite graphs also appears in the context of tree-like arrangements of tangles.
Elbracht, Kneip and Teegen showed on the one hand that~\cref{thm:finiteTDToT} does not directly extend to such graphs:
they constructed a locally finite graph that does not have a tree-decomposition that efficiently distinguishes all its tangles, canonical or not~\cite{InfiniteSplinters}*{Example~4.9}\footnote{
Carmesin, Hamann and Miraftab independently constructed such an example in their arXiv version \cite{CanonicalTreesofTDs}*{Example~7.4} but it is not present in their journal version.}.
On the other hand, they and, independently, Carmesin, Hamann and Miraftab proved that every locally finite graph still admits a \emph{tree of tangles}, a nested set~$N$ of separations such that every two distinguishable tangles are efficiently distinguished by some element of~$N$ and every separation in~$N$ efficiently distinguishes some two tangles: \looseness=-1

\begin{theorem}[\cite{CanonicalTreesofTDs}*{Theorem 7.3} \& \cite{InfiniteSplinters}*{Theorem 6.6}] \label{thm:ToT}
    Every connected locally finite graph has a canonical tree of tangles.
\end{theorem}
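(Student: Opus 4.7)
The plan is to follow the splinter strategy introduced by Elbracht, Kneip and Teegen. For every pair $\{\tau, \tau'\}$ of distinguishable tangles in~$G$, let $\cE(\tau, \tau')$ denote the set of separations of~$G$ that efficiently distinguish~$\tau$ from~$\tau'$; since separation orders are natural numbers, the minimum distinguishing order is attained and $\cE(\tau, \tau')$ is non-empty. The goal is to extract from $\bigcup \cE(\tau, \tau')$ a nested subset $N$ that still meets every $\cE(\tau, \tau')$ and whose elements each lie in some $\cE(\tau, \tau')$; such an $N$ is then by definition a tree of tangles.

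The central technical step will be to verify that the family $\bigl(\cE(\tau, \tau')\bigr)$ has the splinter property: whenever $s \in \cE(\tau_1, \tau_2)$ and $t \in \cE(\tau_3, \tau_4)$ cross, one of the four corner separations obtained from $\{s, t\}$ can replace~$s$ while still efficiently distinguishing $\tau_1$ from~$\tau_2$. Two ingredients feed into this. First, submodularity of the order function yields that the orders of opposite pairs of corners sum to at most $|s| + |t|$, so at least one corner has order at most~$|s|$. Second, the consistency of tangles forces $\tau_1$ and~$\tau_2$ to orient the crossing separation~$t$ in particular ways, which determines which corner inherits the required distinguishing orientations from~$s$. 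A case analysis along the lines of the classical `fish lemma' for tangles then singles out the required corner.

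With the splinter property in hand, I would apply the canonical splinter theorem to the family $\bigl(\cE(\tau, \tau')\bigr)$ to obtain a nested transversal~$N$. By construction $N$ meets every $\cE(\tau, \tau')$, so it efficiently distinguishes every distinguishable pair of tangles; conversely, each element of~$N$ is chosen from some $\cE(\tau, \tau')$ and thus efficiently distinguishes at least one pair. Canonicity is automatic because the family $\bigl(\cE(\tau, \tau')\bigr)$ is defined purely in terms of tangles and the order function, both invariant under $\mathrm{Aut}(G)$, and the selection procedure in the splinter theorem is itself equivariant.

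The main obstacle is the splinter step in the locally finite setting: because separation orders are unbounded, no finite iteration of uncrossings terminates, so one needs a \emph{uniform} canonical choice of corner replacements rather than a greedy argument. This is precisely what is engineered in the two references, for instance via an argument that simultaneously replaces each efficient distinguisher by a canonically determined corner of it with every other separation in the family. Once this is in place, the nestedness and canonicity of~$N$ follow by the abstract splinter framework, and the distinguishing properties are immediate from the definition of the $\cE(\tau, \tau')$.
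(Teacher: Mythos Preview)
The paper does not prove this theorem at all: it is quoted as a known result from \cite{CanonicalTreesofTDs}*{Theorem~7.3} and \cite{InfiniteSplinters}*{Theorem~6.6} and used as a black box in the derivation of \cref{main:Theorem}. Your outline is an accurate high-level summary of the approach taken in the second of these references, so in that sense your proposal matches the literature the paper relies on.

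One small imprecision worth flagging: the splinter hypothesis does not require a corner of two crossing separations $s\in\cE(\tau_1,\tau_2)$ and $t\in\cE(\tau_3,\tau_4)$ to lie specifically in $\cE(\tau_1,\tau_2)$; it suffices that some corner lies in one of the two sets (and in the infinite setting of \cite{InfiniteSplinters} one actually verifies the stronger `thin splinter' condition tailored to unbounded order). Your case analysis via the fish lemma and submodularity gives exactly this weaker conclusion, so the argument still goes through, but the statement of the property should be relaxed accordingly.
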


So in order to understand to which locally finite graphs~\cref{thm:finiteTDToT} extends, i.e., which locally finite graphs admit tree-decompositions that efficiently distinguish all their tangles, we have to investigate what prevents a tree of tangles, which exists by~\cref{thm:ToT}, from inducing a tree-decomposition.
All known examples of locally finite graphs whose trees of tangles do not induce tree-decompositions share one feature:
such graphs have a \emph{thick} end, that is, an end containing infinitely many disjoint rays.
In this paper, we show that thick ends must appear in all such examples:

\begin{mainresult} \label{main:Theorem}
    Every connected locally finite graph without thick ends has a canonical tree-decomposition which efficiently distinguishes all its tangles.
\end{mainresult}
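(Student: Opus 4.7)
The plan is to take as a starting point the canonical tree of tangles $N$ of $G$ provided by \cref{thm:ToT}, and to prove that $N$ itself already induces the desired tree-decomposition. Since $N$ is canonical, every separation in $N$ efficiently distinguishes some pair of tangles, and every distinguishable pair is efficiently distinguished by some element of $N$, the tree-decomposition induced by $N$, if it exists, will automatically be canonical and efficiently distinguish all tangles. The whole task therefore reduces to proving that $N$ does induce a tree-decomposition of $G$.

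The standard way to pass from a nested set to a tree-decomposition is to take for the decomposition tree $T$ the tree of \emph{star orientations} of $N$ (consistent orientations that look like a node of a tree), with edges corresponding to the separations of $N$, and to define each part $V_t$ as the intersection of the large sides of the separations oriented toward $t$. The only way this construction can fail to yield a tree-decomposition of~$G$ is if some vertex $v$ lies in no part; that is, the orientation of $N$ pointed toward~$v$ has no maximal element, so there is a strictly ascending chain $(A_1,B_1)<(A_2,B_2)<\cdots$ in $N$ with $v$ on the large side $B_i$ of every $(A_i,B_i)$. I would derive from any such ``missing node'' a thick end of $G$, contradicting the assumption.

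To extract the thick end, I would study the separators $S_i:=A_i\cap B_i$ along an ascending chain. Each $(A_i,B_i)$ is an efficient distinguisher of two tangles, so the orders of the $S_i$ are tightly controlled by tangle orders. A key technical step is to prove that the $S_i$ eventually leave every finite vertex set and that $|S_i|\to\infty$: if the orders stayed bounded, local finiteness together with the efficiency of the $(A_i,B_i)$ should allow me to extract a limiting separation that would violate strict ascent in $N$. Once the separators escape all finite sets and their sizes diverge, I would use a compactness argument (an infinity-lemma applied to vertex-disjoint $S_i$-$S_{i+1}$-paths) to build, for each $k\in\N$, at least $k$ pairwise disjoint rays whose tails lie in $B_i\setminus A_i$ for arbitrarily large~$i$. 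Any two such rays share tails arbitrarily far along the chain, hence cannot be separated by any finite vertex set, so they belong to a common end, which is therefore thick.

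The main obstacle will be exactly this analysis of ascending chains: ruling out that the separators $S_i$ stabilise or remain small, and turning the resulting ``escape to infinity'' into infinitely many pairwise disjoint rays inside a single end of $G$. This is where the new methods promised in the abstract --- the analysis of the limit behaviour of strictly increasing sequences of separations --- should carry the proof, once the tree-decomposition side has been reduced to this purely structural question about chains in the tree of tangles.
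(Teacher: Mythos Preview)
Your overall strategy is exactly the paper's: take the canonical tree of tangles~$N$ from \cref{thm:ToT}, show that every strictly increasing chain in~$\vN$ is exhaustive (otherwise exhibit a thick end), and conclude via \cref{lem:ExhaustiveImpliesTreeDecomp} that~$N$ induces a canonical tree-decomposition. Your reduction of ``$N$ fails to induce a tree-decomposition'' to ``there is a non-exhaustive strictly increasing chain'' is correct, and your observation that the orders~$|S_i|$ must tend to infinity is the content of \cref{lem:BoundedOrder}.

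The gap is in how you propose to build the thick end. You want to concatenate, via K\"onig's lemma, families of $k$ disjoint $S_i$--$S_{i+1}$-paths into $k$ disjoint rays. But you have not said why such families exist: the fact that~$\{A_i,B_i\}$ efficiently distinguishes \emph{some} pair of tangles says nothing about the $S_i$--$S_{i+1}$ connectivity, because the tangles witnessing the efficiency of $\{A_i,B_i\}$ and those witnessing the efficiency of~$\{A_{i+1},B_{i+1}\}$ are, a priori, unrelated. A small separator between $S_i$ and $S_{i+1}$ need not distinguish either of those pairs. Your final step, that two rays with tails in every $B_i\setminus A_i$ must lie in a common end, is also not automatic: two such rays can lie in different components of $G[B\setminus A]$ (see \cref{ex:RaysNotEquivalent}), so one still has to argue, using the structure of the chain, that they are equivalent in~$G$.

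The paper fills this gap with the interlacing theorem (\cref{lem:ProfileInterlacing}): it replaces the given chain by an interlaced chain~$((A_i',B_i'))$ together with a \emph{coherent} sequence~$(P_i)$ of tangles such that each $\{A_i',B_i'\}$ efficiently distinguishes $P_i$ from $P_{i+1}$. From this one proves (the Claim in \cref{thm:ThickEnd}) that each $\{A_i',B_i'\}$ in fact efficiently distinguishes $P_i$ from the end-tangle~$P_\omega$ of the unique end~$\omega$ in the closure of~$A\cap B$. The paper then finishes without ever constructing rays: if~$\omega$ were thin, \cref{lem:ThinEndSequence} bounds the order of any efficient $P_\omega$-distinguisher, contradicting~$|A_i',B_i'|\to\infty$. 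The ray-building route you sketch is carried out separately in \cref{prop:ThicknessBeyondLimit}, but there too the disjoint rays come from Menger applied to the efficiency of~$\{A_i',B_i'\}$ against~$P_\omega$, not from $S_i$--$S_{i+1}$-paths.
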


\noindent Note that the converse of~\cref{main:Theorem} does not hold, as the infinite grid witnesses.
More generally, \cite{GraphDec}*{Lemma~8.5} implies that every connected locally finite graph that is quasi-transitive and accessible admits a tree-decomposition as in~\cref{main:Theorem} -- no matter if it has a thick end or not. \\

For a proof of~\cref{main:Theorem}, it suffices to show that some (canonical) tree of tangles, which exists by~\cref{thm:ToT}, induces a (canonical) tree-decomposition if the locally finite graph has no thick end. 
We prove that, in fact, \emph{every} tree of tangles of such a graph does so. 

In order to determine whether a nested set~$N$ of separations, such as a tree of tangles, induces a tree-decomposition or not, we study the limit behaviour of sequences of oriented separations in~$N$ that are strictly increasing with respect to the natural partial order of oriented separations:
Let the \emph{limit} of a strictly increasing sequence~$((A_i, B_i))_{i \in \N}$ of separations be~$(A ,B) := (\bigcup_{i \in \N} A_i, \bigcap_{i \in \N} B_i)$, which is again a separation of the graph.
Elbracht, Kneip and Teegen \cite{InfiniteSplinters}*{Lemma~2.7} showed that if the limit separation~$(A, B)$ of every strictly increasing sequence in a (canonical) nested set~$N$ of separations is trivial, in that~$B = \emptyset$, then $N$ induces a (canonical) tree-decomposition.

Our strategy for the proof~\cref{main:Theorem} now is as follows.
Let~$G$ be a locally finite graph, and let~$N$ be a tree of tangles of~$G$.
If~$N$ does not induce a tree-decomposition, then there exists a strictly increasing sequence~$((A_i, B_i))_{i \in \N}$ of separations in~$N$ whose limit~$(A, B)$ satisfies~$B \neq \emptyset$.
We show that the separator $A \cap B$ of this limit separation must be infinite.
Then some end of~$G$ is in the closure of~$A \cap B$ 
in the Freudenthal compactification of $G$.
It will be easy to see that this end~$\omega$ is unique.

The main effort of the proof will be to show that this end~$\omega$ is thick.
To do so, we carefully analyse the sequence~$((A_i, B_i))_{i \in \N}$ and the interaction with its limit~$(A, B)$.
In particular, we exploit the fact that~$((A_i, B_i))_{i \in \N}$ consists of separations in the tree of tangles~$N$, that is, each~$\{A_i, B_i\}$ efficiently distinguishes some two tangles in~$G$.
Our \emph{interlacing theorem}, \cref{lem:ProfileInterlacing}, obtains from~$((A_i, B_i))_{i \in \N}$ a strictly increasing sequence~$((A_i', B_i'))_{i \in \N}$ of separations in~$N$ with the same limit~$(A, B)$ and an accompanying sequence of tangles in~$G$.
This sequence of tangles not only witnesses that each~$\{A_i', B_i'\}$ efficiently distinguishes some two tangles in~$G$, but is also `interlaced' with~$((A_i', B_i'))_{i \in \N}$.
With this structure at hand, we then deduce that the unique end~$\omega$ 
in the closure of~$A \cap B$ must be thick.

All in all, our proof method finds a thick end at the limit of every obstruction~$((A_i, B_i))_{i \in \N}$ that prevents a tree of tangles from inducing a tree-decomposition:

\begin{mainresult} \label{main:Technical}
    Let~$N$ be a tree of tangles of a connected locally finite graph~$G$, and let~$((A_i, B_i))_{i \in \N}$ be a strictly increasing sequence of separations in~$N$ with limit~$(A, B)$.
    If~$B \neq \emptyset$, then~$G$ has a unique end that is 
    in the closure of~$A \cap B$, and this end is thick.
\end{mainresult}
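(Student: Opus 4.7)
The plan is to establish the three assertions in turn: infinitude of $A \cap B$ together with the existence of an end in its closure, uniqueness of this end, and -- the main obstacle -- its thickness.

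First, I would show that $A \cap B$ is infinite by contradiction. Since $A \cap B = \bigcup_i (A_i \cap B)$ is the union of the increasing sequence of finite sets $A_i \cap B \subseteq A_i \cap B_i$, if $A \cap B$ were finite these sets would stabilise past some index $N$. Combining this stabilisation with $B \neq \emptyset$, local finiteness of $G$, and the finiteness of each separator $A_i \cap B_i$, the sequence $((A_i, B_i))$ would have to become constant past a finite stage, contradicting strict increase. Once $A \cap B$ is infinite, compactness of the Freudenthal compactification of the locally finite graph $G$ produces at least one end $\omega$ in the closure of $A \cap B$.

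For uniqueness, the key observation is that any end $\omega' \in \overline{A \cap B}$ lies on the $B$-side of every $(A_i, B_i)$: otherwise the end-component of $\omega'$ in $G - (A_i \cap B_i)$ would be contained in $A_i \setminus B_i \subseteq V \setminus B$, hence disjoint from $A \cap B \subseteq B$, contradicting $\omega' \in \overline{A \cap B}$. If there were two distinct such ends, they would be separated by some finite $S$; combining $S$ with $A_i \cap B_i$ for $i$ large and exploiting the shrinking $B$-sides forces both ends into the same component of $B_i \setminus A_i$ for large~$i$, contradicting their separation by $S$.

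The main obstacle is showing that $\omega$ is thick, and it is here that the interlacing theorem does the heavy lifting. I would apply \cref{lem:ProfileInterlacing} to replace $((A_i, B_i))$ by a strictly increasing sequence $((A'_i, B'_i))$ with the same limit $(A, B)$ and an accompanying sequence of tangles $\tau_i$ such that $\{A'_i, B'_i\}$ efficiently distinguishes $\tau_i$ from $\tau_{i+1}$, with the $\tau_i$ interlaced with the sequence. Infinitude of $A \cap B$ forces $|A'_i \cap B'_i| \to \infty$. Efficient distinguishing and Menger's theorem then yield, for each $i$, a system of $|A'_i \cap B'_i|$ pairwise internally disjoint paths linking the $\tau_i$-side to the $\tau_{i+1}$-side of $(A'_i, B'_i)$; interlacing ensures that the $B'_i$-side endpoints of these paths can be pushed arbitrarily deep into $B'_j \setminus A'_j$ for all $j > i$, hence into every basic neighbourhood of $\omega$. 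A diagonal extraction in the Freudenthal compactification, using local finiteness to pass to convergent subsequences of path segments, then produces infinitely many pairwise disjoint rays all converging to $\omega$, establishing thickness.
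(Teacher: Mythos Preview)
Your overall plan is in the right spirit, but there are genuine gaps in each of the three parts.

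\textbf{Infinitude of $A\cap B$.} Your argument does not use that the separations in $N$ are tight, and without tightness the conclusion is false. On the ray $0\,1\,2\,\dots$ take $A_i=\{0,\dots,i\}$ and $B_i=\{0,i,i+1,\dots\}$: the sequence is strictly increasing, $B=\{0\}\neq\emptyset$, and $A\cap B=\{0\}$ is finite. In this example $A_i\cap B=\{0\}$ for all $i$, so your stabilisation happens immediately, yet the sequence does not become constant. The paper proves infinitude as \cref{lem:LimitSepHasInfiniteOrder}, and tightness is used essentially: one needs a tight component on the $B_i$-side to find a path from a vertex of $(A_i\cap B_i)\setminus(A\cap B)$ into $B\setminus A$ avoiding $A\cap B$, which yields the contradiction. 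In the setting of \cref{main:Technical} the separations \emph{are} tight (by \cref{lem:EfficientDistinguishersAreTight}), so the hypothesis is available; but your argument as written does not use it and does not work.

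\textbf{Uniqueness.} Your argument does not use that $N$ distinguishes the end-tangles, and the informal step ``combining $S$ with $A_i\cap B_i$ forces both ends into the same component of $B_i\setminus A_i$'' is neither justified nor clearly a contradiction to separation by $S$. The paper's route (\cref{lem:UniqueEnd}) is different: if two ends lay in the closure of $A\cap B$, take $\{C,D\}\in N$ distinguishing their tangles; since each end dominates infinitely many vertices of $A\cap B$, both $(C\setminus D)\cap(A\cap B)$ and $(D\setminus C)\cap(A\cap B)$ are non-empty, so $\{C,D\}$ crosses $\{A,B\}$, hence by \cref{lem:FiniteOrderCrossLimit} crosses some $\{A_i,B_i\}$, contradicting nestedness of $N$.

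\textbf{Thickness.} Here your sketch diverges most from the paper and also has the most serious gap. After applying the interlacing theorem you invoke Menger between the $\tau_i$-side and the $\tau_{i+1}$-side of $(A'_i,B'_i)$. But efficient distinction of $\tau_i$ and $\tau_{i+1}$ by $\{A'_i,B'_i\}$ gives you $|A'_i\cap B'_i|$ disjoint paths across that separator, not paths heading towards $\omega$; there is no reason these paths can be ``pushed arbitrarily deep'' into neighbourhoods of $\omega$, because the tangle $\tau_{i+1}$ need not be $P_\omega$. The paper handles this by first proving the crucial Claim inside \cref{thm:ThickEnd}: each $\{A'_i,B'_i\}$ efficiently distinguishes $P_i$ from the \emph{end-tangle} $P_\omega$. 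This claim is where the structure of $N$ and the interlacing data are really used, via \cref{obs:DistinguisherFormChain}, \cref{lem:FiniteOrderCrossLimit} and \cref{lem:FiniteOrderBelowLimit}. With the claim in hand, the paper finishes by contradiction: if $\omega$ were thin, \cref{lem:ThinEndSequence} bounds the order of any efficient $P_\omega$-distinguisher by some $K$, contradicting $|A'_i,B'_i|\to\infty$. Your constructive ray-extraction idea is closer to the paper's \cref{prop:ThicknessBeyondLimit} in \cref{app:InfRaysBeyondLimit}, but note that even there the Menger-type step is applied between $A'_i\cap B'_i$ and $\omega$ (using the claim), not between consecutive tangles.
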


Our paper is structured as follows.
We first recall some relevant definitions and the corresponding notation in~\cref{sec:Prelimns}.
In~\cref{sec:LimitSeparations} we study limit separations and develop various general techniques for their investigation.
Then we introduce our interlacing theorem in \cref{sec:InterlacingMethod}. Finally, we
combine our results on limit separations with the interlacing theorem to prove our main results, \cref{main:Theorem,main:Technical}, in~\cref{sec:ProofMainTheorem}.

\section{Preliminaries} \label{sec:Prelimns}

For the general definitions and notations around graphs, we follow~\cite{DiestelBook16}.
In this section we recall the relevant definitions around separations of graphs and introduce related notation.

A \emph{separation} of a graph~$G$ is an unordered pair~$\{A, B\}$ of subsets of~$V(G)$ such that~$A \cup B = V(G)$ and there is no edge in~$G$ with one endvertex in~$A \setminus B$ and the other one in~$B \setminus A$.
The \emph{order} $|A, B|$ of a separation~$\{A, B\}$ is the cardinality of its \emph{separator}~$A \cap B$.

The two \emph{orientations} of a separation~$\{A, B\}$ are the ordered pairs~$(A, B)$ and~$(B, A)$ which are \emph{oriented separations} of~$G$ and whose \emph{underlying} separation is~$\{A, B\}$.
Given a set~$S$ of separations, we write~$\vS$ for the set~$\{(A, B), (B, A) \mid \{A, B\} \in S\}$ of all orientations of separations in~$S$.
An \emph{orientation} of a set~$S$ of separations of~$G$ is a subset of~$\vS$ containing precisely one orientation of each separation in~$S$.

We equip the set of oriented separations of a graph~$G$ with their natural partial order:
\begin{equation*}
    (A, B) \le (C, D) : \iff A \subseteq C \text{ and } B \supseteq D.
\end{equation*}
Two separations are \emph{nested} if they have 
orientations which are comparable with respect to~$\le$, and a set of separations is \emph{nested} if its elements are pairwise nested.
We further say that two oriented separations are \emph{nested} if their underlying separations are nested.
If two separations are not nested, then they \emph{cross}.
We will use the following observation which can easily be derived from the definition of the partial order of oriented separations.

\begin{lemma}[\cite{confing}*{Statement~(6)}] \label{lem:NestedWithCorners}
    For two oriented separations~$(A, B)$ and~$(C, D)$ of a graph~$G$, we have~$(A, B) \le (C, D)$ if and only if~$(A \cap D) \setminus S = \emptyset$ where~$S := (A \cap B) \cap (C \cap D)$.
    In particular, two separations~$\{A, B\}$ and~$\{C, D\}$ of~$G$ cross if and only if all four sets~$A \cap C$, $A \cap D$, $B \cap C$ and~$B \cap D$ are non-empty after removing~$S$.
\end{lemma}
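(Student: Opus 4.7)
The plan is to unpack the definitions directly; the lemma is essentially a bookkeeping statement linking the partial order on oriented separations to the corner structure. For the forward direction of the first equivalence, suppose $(A,B) \le (C,D)$, i.e., $A \subseteq C$ and $D \subseteq B$. Then $D \subseteq B$ gives $A \cap D \subseteq A \cap B$, while $A \subseteq C$ gives $A \cap D \subseteq C \cap D$. Combining these yields $A \cap D \subseteq (A \cap B) \cap (C \cap D) = S$, which is what we want.

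For the converse, I would exploit only the weaker covering property of the two separations, namely $A \cup B = V(G) = C \cup D$ (the condition about no crossing edges is not needed here). Assume $(A \cap D) \setminus S = \emptyset$, so $A \cap D \subseteq S \subseteq B \cap C$. To prove $A \subseteq C$, take $v \in A$; if $v \notin C$, then $v \in D$ since $C \cup D = V(G)$, so $v \in A \cap D \subseteq C$, contradicting the choice of $v$. An entirely symmetric argument, swapping the roles of the two separations, shows $D \subseteq B$: any $v \in D \setminus B$ would lie in $A$ by $A \cup B = V(G)$, hence in $A \cap D \subseteq B$, a contradiction.

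For the ``in particular'' part, I would first observe that the set $S = (A \cap B) \cap (C \cap D)$ depends only on the unordered separations and is invariant under reversing either orientation. The separations $\{A,B\}$ and $\{C,D\}$ are nested precisely when at least one of the four inequalities $(A,B) \le (C,D)$, $(A,B) \le (D,C)$, $(B,A) \le (C,D)$, $(B,A) \le (D,C)$ holds. Applying the first equivalence to each choice of orientations, this happens if and only if at least one of the four corners $A \cap D$, $A \cap C$, $B \cap D$, $B \cap C$ is contained in $S$. Negating, $\{A,B\}$ and $\{C,D\}$ cross if and only if none of these corners lies in $S$, i.e., all four are non-empty after removing $S$.

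I anticipate no real obstacle beyond this pure unpacking. The only mild subtlety is noticing in the converse direction that the covering condition $A \cup B = V(G) = C \cup D$ suffices, which is what makes the case distinction on whether $v \in C$ (resp. $v \in B$) go through cleanly.
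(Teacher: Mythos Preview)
Your proof is correct; this is precisely the standard elementary verification. Note that the paper does not supply its own proof of this lemma but cites it from~\cite{confing}, so there is no in-paper argument to compare against.
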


A separation~$\{A, B\}$ of a graph~$G$ is~\emph{proper} if~$A, B \neq V(G)$.
Note that if~$(A, B)$ satisfies~$(A, B) \le (C, D)$ and~$(A, B) \le (D, C)$ for some separation~$\{C, D\}$, then~$B = V(G)$ and hence~$\{A, B\}$ is not proper.
It is immediate from the definition of separation that the separator~$A \cap B$ of a proper separation~$\{A,B\}$ of~$G$ is non-empty if~$G$ is connected.

For better readability, we will often use the term `separation' for both separations and oriented separations, if the meaning is clear from the context.
Analogously, we use terms such as `order' which are defined for separations also for oriented separations if there is no ambiguity in how the definitions transfer.

We finally remark that whenever we speak of (sub)sequences, then we assume them to be infinite.

\section{Limit separations} \label{sec:LimitSeparations}

In this section, we study sequences of (oriented) separations and their limits.
We begin by defining limit separations and then note some direct consequences of the definition.
Next, we investigate the limit separations of strictly increasing sequences of separations which are `non-exhaustive'.
Finally, we describe how the relation of a finite-order separation to such a limit separation is connected to its relation with the separations in the sequence.

\subsection{Definition and basic properties}

Let~$((A_i, B_i))_{i \in \N}$ be a sequence of separations of a graph~$G$.
Its~\emph{supremum}~$(A, B)$ (with respect to the partial order~$\le$ of oriented separations) is given by~$A := \bigcup_{i \in \N} A_i$ and~$B := \bigcap_{i \in \N} B_i$.
We remark that $(A_i, B_i) \le (A, B)$ for all $i \in \N$ by definition.
The next lemma shows that~$(A, B)$ is again a separation of~$G$. We will use it throughout this paper tacitly.

\begin{lemma} \label{lemma:limitsepissep}
    Let~$((A_i, B_i))_{i \in \N}$ be a sequence of separations of a graph~$G$.
    Then its supremum~$(A, B)$ is a separation of~$G$.
\end{lemma}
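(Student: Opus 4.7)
The plan is to verify the two defining conditions of a separation for the pair $(A,B)$: first that $A\cup B=V(G)$, and second that no edge of $G$ has one endvertex in $A\setminus B$ and the other in $B\setminus A$. Neither step requires anything about monotonicity of the sequence, so the hypothesis that $((A_i,B_i))_{i\in\N}$ is merely \emph{a} sequence is enough.

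For $A\cup B=V(G)$, I would fix an arbitrary $v\in V(G)$ and split on whether there exists $i$ with $v\in A_i$. If so, $v\in\bigcup_{i\in\N}A_i=A$. Otherwise, $v\notin A_i$ for all $i$, which forces $v\in B_i$ for every $i$ since each $\{A_i,B_i\}$ is a separation; hence $v\in\bigcap_{i\in\N}B_i=B$.

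For the edge condition, I would argue by contradiction: suppose $uv\in E(G)$ with $u\in A\setminus B$ and $v\in B\setminus A$. Since $u\notin B=\bigcap_{i\in\N}B_i$, there exists an index $k$ with $u\notin B_k$, and since $u\in A\subseteq V(G)=A_k\cup B_k$, this means $u\in A_k\setminus B_k$. On the other hand, $v\in B$ gives $v\in B_k$, while $v\notin A=\bigcup_{i\in\N}A_i$ gives $v\notin A_k$; together, $v\in B_k\setminus A_k$. Thus the edge $uv$ has one endvertex in $A_k\setminus B_k$ and the other in $B_k\setminus A_k$, contradicting the fact that $\{A_k,B_k\}$ is a separation of $G$.

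I do not anticipate any real obstacle here; the whole proof is a bookkeeping exercise using only the definitions of union, intersection, and separation, with the single observation that membership in the complement of an infinite intersection is witnessed by a single index. The only subtlety worth highlighting explicitly is that the same index $k$ can be used for both endvertices of the hypothetical bad edge, which is what rules out the possibility that the local separators $A_i\cap B_i$ shift in a way that would allow such an edge to slip through in the limit.
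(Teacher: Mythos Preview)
Your proof is correct and essentially identical to the paper's: both argue by contradiction, pick an index $k$ at which $u\notin B_k$, and observe that $v\in B_k\setminus A_k$ for the same $k$. The only difference is that you explicitly verify $A\cup B=V(G)$, which the paper leaves implicit.
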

\begin{proof}
    Suppose for a contradiction that there exists an edge~$e \in G$ with endvertices~$a \in A \setminus B$ and~$b \in B \setminus A$.
    It is immediate from the definition of~$A$ and~$B$ that~$b \in B_i \setminus A_i$ for all~$i \in \N$.
    The definition of~$B$ also implies that there exists~$I \in \N$ with~$a \notin B_I$, and hence~$a \in A_I \setminus B_I$.
    So~$e$ is an edge of~$G$ joining~$A_I \setminus B_I$ and~$B_I \setminus A_I$, which contradicts that~$(A_I, B_I)$ is a separation of~$G$.
\end{proof}

If a sequence of separations `dominates' another sequence, then this transfers to their suprema.
To make this precise, a sequence~$((A_i, B_i))_{i \in \N}$ of separations of a graph~$G$ \emph{dominates} a sequence~$((C_j, D_j))_{j \in \N}$ of separations of $G$ if for every~$j \in \N$ there exists~$i = i(j) \in \N$ such that~$(C_j, D_j) \le (A_i, B_i)$.
If two sequences of separations 
dominate each other, then they are \emph{interlaced}.
Note that an increasing sequence of separations is interlaced with each of its subsequences.

The next lemma is an immediate consequence of the above definitions. 
It describes how the suprema of two sequences of separations are related to each other if one sequence dominates the other or the sequences are interlaced.
\begin{lemma} \label{lem:InterlacingLemma}
    Let~$((A_i, B_i))_{i \in \N}$ and~$((C_j, D_j))_{j \in \N}$ be two sequences of separations of a graph, and let~$(A, B)$ and~$(C, D)$ be their suprema, respectively.
    If~$((A_i, B_i))_{i \in \N}$ dominates~$((C_j, D_j))_{j \in \N}$, then
    $(C, D) \le (A, B)$.
    In particular, if~$((A_i, B_i))_{i \in \N}$ and~$((C_j, D_j))_{j \in \N}$ are interlaced, then~$(A, B) = (C, D)$. \qed
\end{lemma}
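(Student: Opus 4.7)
The statement is essentially a direct unpacking of the definitions, so the plan is to verify the two set inclusions $C \subseteq A$ and $B \subseteq D$ that together witness $(C,D) \le (A,B)$, using the domination hypothesis. The "in particular" part will then follow by symmetry.

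Concretely, first I would fix the assignment $j \mapsto i(j)$ supplied by the domination hypothesis, so that $(C_j, D_j) \le (A_{i(j)}, B_{i(j)})$, which by definition of $\le$ means $C_j \subseteq A_{i(j)}$ and $B_{i(j)} \subseteq D_j$ for every $j \in \N$.

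To show $C \subseteq A$, I would take $v \in C = \bigcup_j C_j$, pick some $j$ with $v \in C_j$, and observe that $v \in C_j \subseteq A_{i(j)} \subseteq \bigcup_i A_i = A$. To show $B \subseteq D$, it suffices to verify $B \subseteq D_j$ for every fixed $j$, since then $B \subseteq \bigcap_j D_j = D$; and indeed $B = \bigcap_i B_i \subseteq B_{i(j)} \subseteq D_j$. Together these give $(C, D) \le (A, B)$.

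For the second assertion, I would simply note that interlacing means both sequences dominate each other, so applying the first part in both directions yields $(C, D) \le (A, B)$ and $(A, B) \le (C, D)$, hence equality.

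There is no real obstacle here: the argument is purely a chase through the definitions of supremum and of the partial order $\le$ on oriented separations, and uses only the fact (guaranteed by \cref{lemma:limitsepissep}) that the suprema are well-defined separations in the first place.
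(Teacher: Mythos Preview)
Your proposal is correct and matches the paper's approach: the paper marks the lemma with \qed and states that it is an immediate consequence of the definitions, and your argument is exactly such a direct unpacking.
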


While the above two lemmas deal with general sequences of separations, we are mainly interested in sequences which are increasing or even strictly increasing with respect to the partial order of oriented separations.
For an increasing sequence~$((A_i, B_i))_{i \in \N}$ of separations of a graph~$G$, we refer to its supremum~$(A, B)$ as its~\emph{limit}, as this separation is indeed the limit of the sequence with respect to pointwise convergence. \looseness=-1

It is immediate from this definition of the limit separation~$(A, B)$ that, for every finite set~$X \subseteq A$, there exists~$I \in \N$ such that~$X \subseteq A_i$ for all~$i \ge I$, and that, for every finite set~$X \subseteq A \setminus B = V(G) \setminus B$, there exists~$I \in \N$ such that~$X \subseteq V(G) \setminus B_i = A_i \setminus B_i$ for all~$i \ge I$.
Combining these two observations directly yields the following interplay of finite sets with the limit separation and the corresponding sequence.

\begin{lemma} \label{lem:PushingLemma}
    Let~$((A_i, B_i))_{i \in \N}$ be an increasing sequence of separations of a graph, and let~$(A, B)$ be its limit.
    For every finite set~$X \subseteq A$, there exists~$I \in \N$ such that, for all~$i \ge I$, the intersections of~$X$ with~$A \cap B$ and~$A \setminus B$ equal the respective intersections with~$A_i \cap B_i$ and~$A_i \setminus B_i$. \qed
\end{lemma}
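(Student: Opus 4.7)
The plan is to reduce the lemma to the two observations the authors isolate in the paragraph immediately preceding it, by splitting $X$ according to whether its elements lie in $B$.

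First, I would partition the finite set $X \subseteq A$ as $X_1 := X \cap B$ and $X_2 := X \setminus B$, noting that $X_2 \subseteq A \setminus B$ and that both pieces are finite. Since the sequence $((A_i, B_i))_{i \in \N}$ is increasing, monotonicity of the second coordinate gives $B = \bigcap_j B_j \subseteq B_i$ for every $i$; in particular $X_1 \subseteq B_i$ for all $i \in \N$. Applying the first of the two preceding observations to $X_1 \subseteq A$, I obtain an index $I_1$ such that $X_1 \subseteq A_i$, and hence $X_1 \subseteq A_i \cap B_i$, for all $i \geq I_1$. Applying the second observation to $X_2 \subseteq A \setminus B$ yields an index $I_2$ with $X_2 \subseteq A_i \setminus B_i$ for all $i \geq I_2$.

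Setting $I := \max(I_1, I_2)$ and using that $A_i \cap B_i$ and $A_i \setminus B_i$ partition $A_i$, the inclusions above force $X \cap B_i = X_1$ and $X \setminus B_i = X_2$ for every $i \geq I$, which immediately gives the claimed equalities $X \cap (A_i \cap B_i) = X \cap (A \cap B)$ and $X \cap (A_i \setminus B_i) = X \cap (A \setminus B)$. There is no real obstacle here: the statement is essentially a clean repackaging of the two observations just above, and the only subtlety worth flagging is the use of the monotonicity $B \subseteq B_i$ to guarantee that the elements of $X \cap B$ actually land on the $B$-side of each $(A_i, B_i)$ for large $i$.
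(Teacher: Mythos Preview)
Your proof is correct and is precisely the natural way to make explicit the paper's terse ``combining these two observations directly yields'' that precedes the (unproved, \qed-marked) lemma. The split $X = X_1 \sqcup X_2$ along $B$, together with the monotonicity $B \subseteq B_i$, is exactly the combination the authors have in mind.
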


\subsection{Non-exhaustive sequences and their limit separations}

Our main motivation for the study of limit separations of strictly increasing sequences comes from the next lemma by Elbracht, Kneip and Teegen~\cite{InfiniteSplinters}*{Lemma~2.7} which establishes a condition for when a nested set of separations induces a tree-decomposition.
We will later use this lemma in the proof of~\cref{main:Theorem} to obtain a tree-decomposition as in~\cref{main:Theorem} from the nested set given by~\cref{thm:ToT}.

To state this lemma, let us call a strictly increasing sequence~$((A_i, B_i))_{i \in \N}$ of separations of a graph~$G$ \emph{exhaustive} if its limit~$(A, B)$ satisfies~$B = \emptyset$~\cite{GraphDec}*{Section~7.1}.
Furthermore, a nested set~$N$ of separations of~$G$ is~\emph{($\omega$-)exhaustive} if every strictly increasing sequence in~$\vN$ is exhaustive.

\begin{lemma}[\cite{InfiniteSplinters}*{Lemma 2.7}] \label{lem:ExhaustiveImpliesTreeDecomp}
    Let~$N$ be a nested set of proper finite-order
    separations of a connected graph~$G$.
    If~$N$ is exhaustive, then it induces a tree-decomposition~$(T, \cV)$ of~$G$.
    Moreover, if~$N$ is canonical, then~$(T, \cV)$ is canonical as well.
\end{lemma}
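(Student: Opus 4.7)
The plan is to construct $(T, \cV)$ directly from $N$ in the standard way for nested sets of separations and to use the exhaustive hypothesis to ensure that every vertex of $G$ is covered. Call an orientation $O \subseteq \vN$ of $N$ \emph{consistent} if, whenever $(C, D) \in O$ and $(A, B) \le (C, D)$ with $\{A, B\} \in N$, then $(A, B) \in O$. Take as nodes of $T$ the consistent orientations $O$ of $N$ whose set $\min(O)$ of $\le$-minimal elements covers $N$, i.e., contains an orientation of every separation in $N$; for such an $O$, define the part $V_O := \bigcap_{(A, B) \in \min(O)} A$. Two nodes are adjacent in $T$, along the edge labelled by $\{A, B\} \in N$, if their orientations agree on every separation in $N$ except on $\{A, B\}$.

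The nestedness of $N$ implies that $T$ is a tree and that axioms (T2) and (T3) of a tree-decomposition hold. Axiom (T2) follows from the definition of a separation, since for any edge $uv \in E(G)$ no $\{A, B\} \in N$ has $u \in A \setminus B$ and $v \in B \setminus A$; and axiom (T3), that the set of nodes whose part contains a fixed $v \in V(G)$ is connected in $T$, follows from nestedness together with \cref{lem:NestedWithCorners}. Properness of the separations in $N$ is used to ensure that every node of $T$ has a non-trivial part. The only delicate axiom is (T1), that $V(G) = \bigcup_{O} V_O$, and this is where the exhaustive hypothesis enters.

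To prove (T1), fix $v \in V(G)$ and consider $O_v := \{(A, B) \in \vN : v \in A\}$. This is a consistent orientation of $N$ (with both orientations included for separations having $v \in A \cap B$; such ambiguities do not affect the argument). For $v$ to lie in the part $V_{O_v}$, the minimal elements of $O_v$ must cover $N$. If they do not, one can recursively extract a strictly decreasing sequence $((A_i, B_i))_{i \in \N}$ in $O_v$; reversing gives a strictly increasing sequence $((B_i, A_i))_{i \in \N}$ in $\vN$, whose limit $(\bigcup_i B_i, \bigcap_i A_i)$ satisfies $\bigcap_i A_i = \emptyset$ by exhaustiveness. But $v \in A_i$ for every $i$ by the definition of $O_v$, giving $v \in \bigcap_i A_i$, a contradiction.

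Canonicity is automatic, since the entire construction depends only on $N$: any automorphism of $G$ stabilising $N$ setwise permutes the consistent orientations and the corresponding parts, thereby inducing an automorphism of $(T, \cV)$. The main obstacle in the proof is axiom (T1); the argument above shows precisely how the exhaustive hypothesis supplies what the finite case gets for free from the finiteness of~$N$.
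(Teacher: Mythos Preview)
The paper does not prove this lemma; it is quoted from~\cite{InfiniteSplinters}, so there is no in-paper argument to compare against. Your overall plan --- build $T$ from the consistent orientations of $N$ and use exhaustiveness for axiom~(T1) --- is the standard one and is sound in outline.

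The details, however, are internally inconsistent. You define $O$ to be consistent when it is \emph{down}-closed under~$\le$, but your $O_v=\{(A,B)\in\vN:v\in A\}$ is \emph{up}-closed: from $v\in C$ and $A\subseteq C$ one cannot conclude $v\in A$, so $O_v$ fails your own consistency test. With down-closed orientations the part must be $V_O=\bigcap_{(A,B)\in O}B$, not $\bigcap_{\min(O)}A$; already for two separations with $(A_1,B_1)<(A_2,B_2)$ your formulas yield a single node whose ``part'' is $A_1\cap B_2$, the intersection of the two separators, instead of the three expected parts $A_1$, $A_2\cap B_1$, $B_2$. And the node condition should be that $O$ contains no infinite strictly increasing chain (equivalently here, that every element of $O$ lies below a $\le$-maximal one), not that $\min(O)$ covers~$N$. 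Once you correct $O_v$ to $\{(A,B)\in\vN:v\in B\}$ and the node condition accordingly, your (T1) argument works directly without reversing: a strictly increasing chain $((A_i,B_i))_{i\in\N}$ in $O_v$ would have $v\in\bigcap_i B_i=\emptyset$ by exhaustiveness. The remaining assertions ($T$ is a tree, (T2) and (T3)) are routine but not content-free; a complete proof would still owe a line or two for each.
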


In order to prove that a nested set~$N$ of separations is exhaustive, it 
suffices to consider certain `representative' sequences in~$\vN$ with strong structural properties instead of just arbitrary ones.
\cref{lem:InterlacingLemma} provides us a tool for this:
when we investigate the limit of a strictly increasing sequence in~$\vN$, then~\cref{lem:InterlacingLemma} allows us to instead consider any interlaced strictly increasing sequence as they have the same limit. 
As an important example of this, we will demonstrate such a use of~\cref{lem:InterlacingLemma} in combination with the next~\cref{lem:BoundedOrder} in the paragraph after stating the latter.

To state~\cref{lem:BoundedOrder}, recall that, given a set~$X$ of vertices of a graph~$G$, a component~$K$ of~$G - X$ is \emph{tight} if~$N_G(K) = X$, and a separation~$\{A, B\}$ of~$G$ is \emph{tight} if~$A \setminus B$ and~$B \setminus A$ each contain the vertex set of a tight component of~$G - (A \cap B)$~(cf.~\cite{InfiniteSplinters}*{Section~6}).
Note that tight separations are proper.
We remark that the property `tight' of a component $K$ of $G-X$ heavily depends on the knowledge of the deleted set $X$ via context.

\begin{lemma}[\cite{GraphDec}*{Lemma 7.5}] \label{lem:BoundedOrder}
    Let~$((A_i, B_i))_{i \in \N}$ be a strictly increasing sequence of tight separations of a connected locally finite graph~$G$.
    If there exists~$K \in \N$ such that~$|A_i,B_i| \le K$ for all~$i \in \N$, then~$((A_i, B_i))_{i \in \N}$ is exhaustive.
\end{lemma}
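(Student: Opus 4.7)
The plan is to argue by contradiction: suppose $B \neq \emptyset$ and derive a contradiction from tightness, bounded order, and local finiteness of $G$. I begin by analyzing the limit separator $S := A \cap B$. Every $v \in S$ lies in some $A_j$ and in every $B_i$, hence in $S_i$ for all $i \geq j$; since $|S_i| \le K$, a pigeonhole argument gives $|S| \le K$. Passing to a subsequence (whose limit agrees with $(A,B)$ by~\cref{lem:InterlacingLemma}), I may assume $S \subseteq S_i$ and that $|S_i|$ is constant for every $i$, and I set $T_i := S_i \setminus S$.

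Two easy cases come first. If $S = \emptyset$, then $V(G) = (A \setminus B) \sqcup (B \setminus A)$ with no edges between the two parts (by the separation property of $(A, B)$) and with both parts nonempty — contradicting the connectedness of $G$. After a further subsequence I may assume either $T_i = \emptyset$ for all $i$ or $T_i \neq \emptyset$ for all $i$; in the former case $S_i \equiv S$, and each of $A_i \setminus B_i$ and $B_i \setminus A_i$ is a union of components of $G - S$. Local finiteness together with connectedness forces $G - S$ to have only finitely many components (each sends an edge to the finite set $S$), while strict increase requires a fresh component to migrate from the $B$-side to the $A$-side at every step — impossible.

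So $S \neq \emptyset$ and $T_i \neq \emptyset$ for every $i$. By tightness, for each $i$ there is a component $C_i$ of $G - S_i$ in $B_i \setminus A_i$ with $N_G(C_i) = S_i \supseteq S$, so every $s \in S$ has a neighbor in $C_i$; it follows that $C_i$ lies in a \emph{full} component $K_i$ of $G - S$, meaning $N_G(K_i) = S$. Each full component contains at least one neighbor of any fixed $s \in S$, and $s$ has only finitely many neighbors in $G$ by local finiteness; hence there are only finitely many full components, so a further subsequence gives $K_i = K$ constant. Since every $t \in T_i$ has a neighbor in $C_i \subseteq K$ one sees $T_i \subseteq K$; combined with $T_i \subseteq A$, this rules out $K \subseteq B \setminus A$, and so $K \subseteq A \setminus B$.

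The final contradiction uses local finiteness once more: each $s \in S$ contributes at least one neighbor to every $C_i$ but has only finitely many neighbors in $G$. By the infinite pigeonhole, applied in turn to each of the finitely many $s \in S$, I obtain a subsequence in which, for every $s \in S$, a single fixed vertex $w^s$ is a neighbor of $s$ with $w^s \in C_i \subseteq K$ for every $i$ of this subsequence. Then $w^s \in K \subseteq A$ by the previous step, whereas $w^s \in C_i \subseteq V(G) \setminus A_i$ for every such $i$, and the monotonicity of $(A_i)_{i \in \N}$ then forces $w^s \notin A$ — the desired contradiction. I expect the main obstacle to be this last case: because the tight components $C_i$ may wander through $K$ as $i$ grows, the key idea is to exploit tightness not through the moving sets $C_i$ themselves but through the fixed finite set $S$, whose vertices must each have a neighbor in every $C_i$; local finiteness then powers the pigeonhole that pins down a stationary witness $w^s$.
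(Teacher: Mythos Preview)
The paper does not supply its own proof of this lemma; it is quoted from \cite{GraphDec}*{Lemma~7.5} and used as a black box. So there is no in-paper argument to compare against.

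Your proof is correct. The reductions via pigeonhole to $S \subseteq S_i$ with $|S_i|$ constant, the easy cases $S = \emptyset$ and $T_i = \emptyset$, and the main case all go through as written. In the main case the chain of inclusions is sound: $C_i$ is connected and disjoint from $S_i \supseteq S$, hence lies in a component $K_i$ of $G - S$; tightness gives $N_G(C_i) = S_i \supseteq S$, so $N_G(K_i) = S$ and $K_i$ is full; local finiteness of any fixed $s \in S$ bounds the number of full components; and once $K_i \equiv K$, the nonempty $T_i \subseteq K \cap A$ forces $K \subseteq A \setminus B$, after which the stationary witness $w^s \in C_i \subseteq B_i \setminus A_i$ for all $i$ in the final subsequence contradicts $w^s \in K \subseteq A$.

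Two minor remarks. First, once $S \subseteq S_i$ and $|S_i|$ is constant, the quantity $|T_i| = |S_i| - |S|$ is already constant, so the dichotomy ``$T_i = \emptyset$ for all $i$'' versus ``$T_i \neq \emptyset$ for all $i$'' holds automatically without a further subsequence. Second, in the final step you only need a single $s \in S$ and a single stationary $w^s$; iterating over all of $S$ is unnecessary.
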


We use \cref{lem:InterlacingLemma} to show that
if a nested set~$N$ of separations consists of tight separations, then in order to determine whether~$\vN$ is exhaustive, it suffices to investigate strictly increasing sequences in~$\vN$ whose orders are strictly increasing.
Note that by applying \cref{lem:InterlacingLemma} again on a suitable subsequence we see that it suffices that the orders of the elements of every non-exhaustive strictly increasing sequence $(A_i,B_i)_{i \in \N}$ in~$\vN$ \emph{cofinitely exceed every integer}, that is for every $k \in \N$ there exists $I \in \N$ such that $|A_i, B_i| \geq k$ for every~$i \geq I$.
Indeed, suppose that for some $k \in \N$ there exists no such $I \in \N$. Then there exists a subsequence of $(A_i, B_i)_{i \in \N}$ whose orders are bounded from above by $k$.
This subsequence is exhaustive by \cref{lem:BoundedOrder} which contradicts by \cref{lem:InterlacingLemma} that $(A_i, B_i)_{i \in \N}$ is not exhaustive.

In the remainder of this section, we show that if a strictly increasing sequence of tight separations is non-exhaustive, then its limit separation has infinite order.
\begin{theorem} \label{lem:LimitSepHasInfiniteOrder}
    Let~$((A_i, B_i))_{i \in \N}$ be a strictly increasing sequence of tight separations of a connected locally finite graph~$G$, and let~$(A, B)$ be its limit.
    If~$((A_i, B_i))_{i \in \N}$ is non-exhaustive, then~$A \cap B$ is infinite. \looseness=-1
\end{theorem}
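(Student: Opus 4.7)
The plan is to argue by contradiction: assume $A \cap B$ is finite and derive an absurdity by combining the tightness of the $(A_i, B_i)$ with local finiteness of $G$.

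First, I would reduce to the case where the orders $|A_i, B_i|$ tend to infinity by passing to a suitable subsequence via \cref{lem:BoundedOrder} and \cref{lem:InterlacingLemma}, which preserves the limit. I would next observe that $A \cap B \neq \emptyset$: tightness forces each $A_i \setminus B_i \neq \emptyset$, whence $A \supseteq A_i \neq \emptyset$; together with $B \neq \emptyset$ and the connectedness of $G$, if $A \cap B$ were empty then $(A, B)$ would split $V(G)$ into two non-empty parts with no edge between, which is absurd.

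The main structural step uses tightness together with \cref{lem:PushingLemma}. Write $S_i := A_i \cap B_i$ and let $K_i \subseteq B_i \setminus A_i$ be a tight component of $G - S_i$ with $N_G(K_i) = S_i$. Applying \cref{lem:PushingLemma} to the finite set $X := A \cap B \subseteq A$, I obtain $A \cap B \subseteq S_i$ for all large $i$. I claim that, for all large $i$, in fact $K_i \subseteq A \setminus B$. Indeed, $K_i$ is connected and disjoint from $A \cap B$, so it lies in a single component $D$ of $G - (A \cap B)$; since $(A, B)$ is a separation, $D$ is contained in $A \setminus B$ or in $B \setminus A$. If $D \subseteq B \setminus A$, then $N_G(D) \subseteq A \cap B$, whence $S_i = N_G(K_i) \subseteq D \cup N_G(D) \subseteq (B \setminus A) \cup (A \cap B) \subseteq B$; combined with $S_i \subseteq A_i \subseteq A$, this yields $S_i \subseteq A \cap B$ and hence $|S_i| \leq |A \cap B|$, contradicting the unboundedness of the orders. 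Hence $D \subseteq A \setminus B$, as claimed.

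A pigeonhole argument then produces the contradiction. Fix any $v \in A \cap B$. For all large $i$, $v \in S_i = N_G(K_i)$, so $v$ has a neighbor $n_i \in K_i$. Since $N_G(v)$ is finite, some single vertex $n$ equals $n_i$ for infinitely many $i$. On one hand, $n \in K_i \subseteq B_i \setminus A_i$ for infinitely many $i$, and the monotonicity of $(A_i)_{i \in \N}$ and $(B_i)_{i \in \N}$ then forces $n \in B$ and $n \notin A$, i.e.\ $n \in B \setminus A$. On the other hand, since $K_i \subseteq A \setminus B$ for all sufficiently large $i$, the same $n$ lies in $A \setminus B$. These two conclusions contradict $(A \setminus B) \cap (B \setminus A) = \emptyset$. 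I expect the main obstacle to lie in the structural step where the finiteness of $A \cap B$ must be leveraged against tightness to force $K_i$ onto the $A$-side; once this is established, the pigeonhole conclusion is essentially immediate.
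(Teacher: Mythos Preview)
Your proposal is correct and follows essentially the same approach as the paper: both assume $A\cap B$ finite, use \cref{lem:PushingLemma} to get $A\cap B\subseteq A_i\cap B_i$ for large $i$, invoke \cref{lem:BoundedOrder} with \cref{lem:InterlacingLemma} to force $|A_i,B_i|>|A\cap B|$, and then combine tightness with a pigeonhole on the (finitely many) neighbours of a vertex in $A\cap B$ to produce a vertex of $K_i$ in $B\setminus A$. The only cosmetic difference is in the endgame: the paper picks $u\in (A_i\cap B_i)\setminus(A\cap B)$ and runs a $u$--$w$ path through $K_i$ that avoids $A\cap B$, whereas you observe directly that the connected set $K_i$ lies in a single component of $G-(A\cap B)$ and rule out the $B\setminus A$ side by the order inequality --- both arguments exploit the same fact that $K_i$ avoids $A\cap B$.
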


\noindent Towards the proof of~\cref{lem:LimitSepHasInfiniteOrder}, we show two auxiliary results.
The first lemma observes that~$A \cap B$ is non-empty even under weaker assumptions.
\begin{lemma} \label{lem:LimitSeparatorIsNonEmpty}
    Let~$((A_i, B_i))_{i \in \N}$ be a strictly increasing sequence of separations of a connected graph, and let~$(A, B)$ be its limit.
    If~$((A_i, B_i))_{i \in \N}$ is non-exhaustive, then~$A \cap B$ is non-empty.
\end{lemma}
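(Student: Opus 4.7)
The plan is to derive the result directly from the separation property of the limit together with connectedness of~$G$; the whole proof rests on noting that a separation~$\{A,B\}$ of a connected graph~$G$ with empty separator forces either~$A$ or~$B$ to be empty. By \cref{lemma:limitsepissep} the limit~$(A,B)$ is a separation of~$G$, and by non-exhaustiveness~$B\neq\emptyset$. So the task is to rule out $A \cap B = \emptyset$, and the first thing to check is that~$A\neq\emptyset$ as well. This follows from the strict increase of the sequence: if $A = \bigcup_i A_i$ were empty, then every~$A_i$ would be empty, and since each~$\{A_i,B_i\}$ is a separation this would force~$B_i = V(G)$ for every~$i$, so all terms of the sequence would equal the trivial oriented separation~$(\emptyset, V(G))$, contradicting strictness.

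Now assume for a contradiction that~$A \cap B = \emptyset$. Then $A = A\setminus B$ and $B = B \setminus A$, so the separation condition on~$(A,B)$ says exactly that~$G$ has no edge between~$A$ and~$B$. Together with~$A \cup B = V(G)$, this exhibits a partition of~$V(G)$ into two non-empty sets with no edges between them, contradicting connectedness of~$G$. Hence~$A\cap B\neq\emptyset$, as required. There is no real obstacle here: once~$(A,B)$ is recognised as a separation via \cref{lemma:limitsepissep}, everything reduces to the elementary observation above, the only mildly non-obvious point being the need to separately verify~$A\neq\emptyset$ from strictness before invoking connectedness.
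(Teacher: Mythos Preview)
Your proof is correct and follows essentially the same approach as the paper: establish that both $A$ and $B$ are non-empty (the former from strictness of the sequence, the latter from non-exhaustiveness), then use that $(A,B)$ is a separation of the connected graph $G$ to conclude $A\cap B\neq\emptyset$. Your write-up is somewhat more explicit about why strictness forces some $A_i$ to be non-empty, but the argument is the same.
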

\begin{proof}
    Since the sequence~$((A_i, B_i))_{i \in \N}$ is strictly increasing, some~$A_i$, and thus~$A$, is non-empty.
    The set~$B$ is non-empty, as~$((A_i, B_i))_{i \in \N}$ is non-exhaustive.
    So since~$G$ is connected and~$(A, B)$ is a separation of~$G$, $A \cap B$ has to be non-empty as well.
\end{proof}

The second lemma shows that if the separations in a sequence are tight, then its supremum is again close to being tight.
\begin{lemma} \label{lem:limitsepispseudotight}
    Let~$((A_i, B_i))_{i \in \N}$ be an increasing sequence of tight separations of a locally finite graph~$G$, and let~$(A, B)$ be its supremum.
    If~$B$ is non-empty, then every~$v \in A \cap B$ has a neighbour~$w \in B \setminus A$ such that, for infinitely many~$i \in \N$, $w$ is contained in a tight component~$K_i$ of~$G - (A_i \cap B_i)$ with~$ V( K_i ) \subseteq B_i \setminus A_i$.
    In particular, we have~$N_G(B\setminus A) = A \cap B$.
\end{lemma}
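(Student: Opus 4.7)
The plan is to extract the desired neighbour $w$ of a given $v \in A \cap B$ via a pigeonhole argument over the finitely many neighbours of $v$ in $G$. First I would argue that, for every $v \in A \cap B$, the increasing partial order together with $v \in A = \bigcup_i A_i$ forces $v \in A_i$ for all $i \ge I$ with some $I \in \N$; as $v \in B = \bigcap_i B_i$ anyway, this yields $v \in A_i \cap B_i$ for every $i \ge I$. Since each $(A_i, B_i)$ is tight, I can fix a tight component $K_i$ of $G - (A_i \cap B_i)$ with $V(K_i) \subseteq B_i \setminus A_i$, and by the definition of tightness $N_G(K_i) = A_i \cap B_i$, so $v$ has some neighbour $w_i \in V(K_i)$.

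Local finiteness now supplies the punchline: among the finitely many neighbours of $v$, some fixed $w$ must coincide with $w_i$ for infinitely many $i \ge I$, and for each such $i$ it lies in the prescribed tight component $K_i \subseteq B_i \setminus A_i$. Using that $(A_i)_{i \in \N}$ is increasing and $(B_i)_{i \in \N}$ is decreasing (both from the increasing partial order on the sequence), the condition $w \in B_i \setminus A_i$ for infinitely many $i$ propagates to $w \in B$ and $w \notin A$, so $w \in B \setminus A$, finishing the first part.

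For the `in particular' statement, the inclusion $A \cap B \subseteq N_G(B \setminus A)$ is immediate from what was just proved, since the witness $w \in B \setminus A$ of $v \in A \cap B$ shows that $v$ is adjacent to $B \setminus A$ and evidently lies outside it. For the reverse inclusion, any $u \in N_G(B \setminus A)$ satisfies $u \notin B \setminus A$ by definition of $N_G$ and has a neighbour in $B \setminus A$; since $(A, B)$ is a separation, there is no edge from $A \setminus B$ to $B \setminus A$, so $u$ cannot lie in $A \setminus B$, which forces $u \in A \cap B$. The only item needing real care throughout is the bookkeeping step of choosing the tight component $K_i$ on the correct side (inside $B_i \setminus A_i$) rather than an arbitrary component of $G - (A_i \cap B_i)$, so that the pigeonhole step produces a $w$ on the `$B$-side' where the limit argument can land it in $B \setminus A$.
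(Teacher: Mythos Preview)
Your proof is correct and follows essentially the same approach as the paper: both arguments fix a tight component $K_i \subseteq B_i \setminus A_i$, use that $v \in A_i \cap B_i$ for cofinitely many $i$ to obtain neighbours $w_i \in K_i$, and then apply pigeonhole over the finitely many neighbours of $v$ to extract a single $w$ lying in $B_i \setminus A_i$ for infinitely many $i$, hence in $B \setminus A$. The only cosmetic differences are that the paper invokes \cref{lem:InterlacingLemma} for the final containment $w \in B \setminus A$ where you argue directly via monotonicity of the $A_i$ and $B_i$, and that you spell out the `in particular' clause in full whereas the paper leaves it as immediate.
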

\begin{proof}
    Consider an arbitrary vertex~$v \in A \cap B$.
    For every~$i \in \N$, the separation~$\{A_i, B_i\}$ is tight by assumption and 
    hence there exists a tight component~$K_i$ of~$G-(A_i \cap B_i)$ with~$V( K_i ) \subseteq B_i \setminus A_i$.
    Since~$G$ is locally finite, $v$ has only finitely many neighbours. 
    As $v \in A \cap B$, all but at most finitely many $K_i$ contain $v$ in their neighbourhood.
    Thus, the pigeonhole principle yields a neighbour~$w$ of~$v$ which is contained in infinitely many~$K_i$; let us denote the collection of all such~$i$ by~$\cI$.
    Then we have $w \in \bigcap_{i \in \cI} V( K_i ) \subseteq \bigcap_{i \in \cI} ( B_i \setminus A_i ) = B \setminus A$, where the last equality follows from~\cref{lem:InterlacingLemma} since~$((A_i,B_i))_{i \in \cI}$ is a subsequence of increasing sequence~$((A_i,B_i))_{i \in \N}$.
    The `in particular'-part follows immediately.
\end{proof}

\noindent We briefly remark that the above proof of~\cref{lem:limitsepispseudotight} did not use the fact that there also exists a tight component of~$G - (A_i \cap B_i)$ in~$ G[ A_i \setminus B_i ]$ 
for all~$i \in \N$.
Indeed, it is enough to assume that the separation~$\{A_i, B_i\}$ is proper and that there is a tight component in~$ G[ B_i \setminus A_i ]$.

With~\cref{lem:LimitSeparatorIsNonEmpty} and~\cref{lem:limitsepispseudotight} at hand, we are ready to prove~\cref{lem:LimitSepHasInfiniteOrder}.

\begin{proof}[Proof of~\cref{lem:LimitSepHasInfiniteOrder}]
    Suppose for a contradiction that~$A \cap B$ is finite.
    Then~\cref{lem:PushingLemma} yields~$I_1 \in \N$ such that~$A \cap B \subseteq A_i \cap B_i$ for all~$i \ge I_1$.
    Moreover, since~$((A_i, B_i))_{i \in \N}$ is non-exhaustive, \cref{lem:InterlacingLemma} and~\cref{lem:BoundedOrder} together imply that there is an integer~$I_2 \ge I_1$ such that~$|A_i, B_i| > |A, B|$ for all~$i \ge I_2$; in particular, we have~$A \cap B \subsetneq A_i \cap B_i$ for all~$i \ge I_2$.
    Furthermore, $A \cap B \neq \emptyset$ by~\cref{lem:LimitSeparatorIsNonEmpty}, and~\cref{lem:limitsepispseudotight} thus yields a neighbour~$w \in B\setminus A$ of~$A \cap B$ and an infinite set~$\cI \subseteq \N$ such that~$w$ is contained in a tight component~$K_i$ of~$G - (A_i \cap B_i)$ with~$V( K_i ) \subseteq B_i \setminus A_i$ for all~$i \in \cI$.
    
    Fix any integer~$i \in \cI$ with~$i \ge I_2$.
    Since~$i \ge I_2$, there exists some vertex~$u \in (A_i \cap B_i) \setminus (A \cap B)$.
    So~$u \in A_i \cap B_i = N_G(K_i)$, since~$K_i$ is a tight component of~$G - (A_i \cap B_i)$, and~$w \in K_i$.
    Thus, there exists a $u$--$w$~path~$P$ in~$G$ such that all vertices of~$P$ except~$u$ are contained in~$K_i$.
    We claim that~$P$ contradicts that~$(A, B)$ is a separation of~$G$.

    To see this, recall that~$w \in B \setminus A$.
    We also have~$u \in A \setminus B$, as~$u \in A_i \setminus (A \cap B)$ by its choice and~$A_i \subseteq A$.
    Moreover, $K_i$ is a component of~$G - (A_i \cap B_i)$ and hence disjoint from~$A_i \cap B_i$ and in particular from~$A \cap B$, since~$A \cap B \subseteq A_i \cap B_i$ for~$i \ge I_2 \ge I_1$.
    Altogether, the path~$P$ joins~$u \in A \setminus B$ and~$w \in B \setminus A$, but avoids~$A \cap B$ which contradicts that~$(A, B)$ is a separation of~$G$.
\end{proof}

By~\cref{lem:LimitSepHasInfiniteOrder}, the separator~$A \cap B$ of the limit separation~$(A, B)$ of a non-exhaustive strictly increasing sequence of separations is infinite.
As we will see in~\cref{sec:ProofMainTheorem}, this is already the first step towards~\cref{main:Technical} which asserts that the graph has a thick end which is 
in the closure of~$A \cap B$ in the Freudenthal compactification of $G$.
Indeed, for every infinite set of vertices in a connected locally finite graph, such as~$A \cap B$, there exists an end which is 
in the closure of this set.
So it remains to show that the end arising in this way is thick which will be the main challenge in~\cref{sec:ProofMainTheorem}. \looseness=-1

\subsection{Interplay with the partial order of oriented separations}

Consider the limit separation~$(A, B)$ of an increasing sequence~$((A_i, B_i))_{i \in \N}$ of separations, and a further fixed separation~$\{C, D\}$.
Given the relation of~$(A, B)$ and the orientations of~$\{C, D\}$ with respect to the partial order of oriented separations, what can we say about the relation of the $(A_i, B_i)$ and the orientations of~$\{C, D\}$?
This question is of particular relevance for the analysis of limit separations that we will perform in~\cref{sec:ProofMainTheorem}, and we shall address this question here.

We start with the case that no orientation of a separation~$\{C, D\}$ is comparable with the limit separation~$(A, B)$, i.e.\ that~$\{C, D\}$ crosses~$\{A, B\}$.

\begin{lemma} \label{lem:FiniteOrderCrossLimit}
    Let~$((A_i, B_i))_{i \in \N}$ be an increasing sequence of separations of a graph~$G$, and let~$(A, B)$ be its limit.
    If a separation~$\{C, D\}$ of~$G$ crosses~$\{A, B\}$, then there exists~$I \in \N$ such that~$\{C, D\}$ crosses~$\{A_i, B_i\}$ for all~$i \ge I$.
\end{lemma}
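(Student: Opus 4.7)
The plan is to reduce the statement to the corner-characterisation of crossing in \cref{lem:NestedWithCorners} and then transfer finitely many witnesses along the sequence via \cref{lem:PushingLemma}. Setting $S := (A\cap B)\cap(C\cap D)$, the hypothesis that $\{A,B\}$ crosses $\{C,D\}$ provides witnesses
\[
 a_1 \in (A\cap C)\setminus S,\quad a_2 \in (A\cap D)\setminus S,\quad b_1 \in (B\cap C)\setminus S,\quad b_2 \in (B\cap D)\setminus S.
\]
The goal is to show that these same four vertices witness that $\{A_i,B_i\}$ crosses $\{C,D\}$ for all large enough~$i$, where the relevant `corner-excluded set' is now $S_i := (A_i \cap B_i)\cap(C\cap D)$.

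First I would handle the $b$-witnesses, which survive for every~$i$: since $B = \bigcap_i B_i$, one has $b_1,b_2 \in B_i$ for all $i \in \N$. To verify that $b_1 \notin S_i$, I split on whether $b_1\in A\cap B$. If $b_1\notin A$, then $b_1\notin A_i$ for any~$i$, hence $b_1\notin A_i\cap B_i \supseteq S_i$. Otherwise $b_1\in A\cap B$, and the assumption $b_1\notin S$ forces $b_1\notin C\cap D$; since $S_i \subseteq C\cap D$, again $b_1\notin S_i$. The same argument applies to $b_2$, so $b_1\in (B_i\cap C)\setminus S_i$ and $b_2\in (B_i\cap D)\setminus S_i$ for every~$i$.

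Next I would apply \cref{lem:PushingLemma} to the finite set $X := \{a_1,a_2\} \subseteq A$, obtaining some $I\in\N$ such that for all $i\geq I$ the intersections of $X$ with $A\cap B$ and with $A\setminus B$ agree with those with $A_i\cap B_i$ and $A_i\setminus B_i$, respectively. In particular $a_1,a_2\in A_i$ for $i\geq I$. For $a_1$: if $a_1\in A\setminus B$ then $a_1\in A_i\setminus B_i$, so $a_1\notin A_i\cap B_i \supseteq S_i$; if instead $a_1\in A\cap B$, then $a_1\notin S$ gives $a_1\notin C\cap D \supseteq S_i$. Either way $a_1\in (A_i\cap C)\setminus S_i$ for $i\geq I$, and symmetrically $a_2\in (A_i\cap D)\setminus S_i$.

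Combining the two paragraphs, for every $i\geq I$ each of the four corners of $\{A_i,B_i\}$ and $\{C,D\}$ remains non-empty after deleting $S_i$, so $\{A_i,B_i\}$ crosses $\{C,D\}$ by the second part of \cref{lem:NestedWithCorners}. The only subtlety is that \cref{lem:PushingLemma} alone does not transfer an $a$-witness lying in $A\cap B$ (since such a vertex need not lie in $A_i\cap B_i$ for large~$i$); this is precisely where one needs the assumption that the witness avoids $S$, which then guarantees it avoids every $S_i$. Apart from that bookkeeping point the argument is a routine two-step application of the lemmas already at hand, and I do not expect any serious obstacle.
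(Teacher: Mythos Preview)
Your proof is correct. The four-corner verification via \cref{lem:NestedWithCorners} and \cref{lem:PushingLemma} goes through exactly as you describe; in particular the case split on whether a witness lies in $A\cap B$ (and hence outside $C\cap D$, by $\notin S$) versus in $A\setminus B$ (and hence outside $A_i\cap B_i$ for large $i$) is the right dichotomy. One small slip in your closing commentary: when $a_j\in A\cap B$, the Pushing Lemma actually guarantees that $a_j$ \emph{does} lie in $A_i\cap B_i$ for all $i\ge I$; the worry is precisely that it might therefore fall into $S_i$, and it is the hypothesis $a_j\notin C\cap D$ that rules this out. Your proof steps handle this correctly; only the parenthetical explanation is inverted.

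The paper takes a different and somewhat shorter route: it proves the contrapositive directly at the level of the partial order, without invoking either \cref{lem:NestedWithCorners} or \cref{lem:PushingLemma}. If $\{C,D\}$ is nested with $\{A_i,B_i\}$ for infinitely many $i$, then either some orientation of $\{C,D\}$ lies below some $(A_j,B_j)$ (and hence below $(A,B)$ by monotonicity), or else $(A_j,B_j)$ lies below a fixed orientation of $\{C,D\}$ for infinitely many $j$ (and hence so does $(A,B)$, since the sequence is increasing). Your argument is more hands-on and reuses machinery the paper has already set up; the paper's argument is more conceptual and avoids any vertex-level bookkeeping. Both are perfectly valid, and indeed the paper's acknowledgements suggest its original proof may have been closer in spirit to yours.
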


\begin{proof}
    We prove the contrapositive of this statement.
    Assume that a separation $\{ C,D\}$ of $G$ is nested with infinitely many $\{A_i,B_i\}$.
    Denote the infinite set of these indices $i$ by $I$.
    First, assume that there is some~$j \in I$ such that $(C,D) \leq (A_j,B_j)$ or $(D,C) \leq (A_j, B_j)$.
    Since $((A_i, B_i))_{i \in \N}$ is increasing, $(C,D) \leq (A_i, B_i)$ for every $i \in \N$ with $i \geq j$.
    Thus, $(C,D) \leq (A,B)$ or $(D,C) \leq (A,B)$, respectively.
    Otherwise, we have~$(A_j,B_j) \leq (C,D)$ or $(A_j,B_j) \leq (D,C)$ for every $j \in I$.
    Since $I$ is infinite, we may assume by symmetry that there are infinitely many $j \in I$ with $(A_j,B_j) \leq (C,D)$.
    Thus, $(A,B) \leq (C,D)$.
\end{proof}

We now turn to the case that some orientation of a separation~$\{C, D\}$ is comparable with the limit separation~$(A, B)$, i.e.\ that~$\{C, D\}$ is nested with~$\{A, B\}$.
It is immediate from the definition of limit separation that if an orientation of~$\{C, D\}$, say~$(C, D)$, is greater than~$(A, B)$, then~$(C, D)$ is also greater than all the~$(A_i, B_i)$ in the sequence defining~$(A, B)$.

It remains to consider the case that an orientation of a separation~$\{C, D\}$, say~$(C, D)$, is smaller than the limit separation~$(A, B)$.
For this case, we restrict our attention to finite-order separations~$(C, D)$, as infinite-order separations do not allow any conclusion on their relation with the~$(A_i, B_i)$ in the sequence defining~$(A, B)$.
The next two lemmas describe the relation of~$(C, D)$ with the~$(A_i, B_i)$, depending on whether the sequence~$((A_i, B_i))_{i \in \N}$ is exhaustive or not.

First, let us assume that~$((A_i, B_i))_{i \in \N}$ is exhaustive.
Then both orientations of~$\{C, D\}$ are smaller than~$(A, B)$.
\cref{lem:ExhaustiveFiniteOrderBelowLimit} asserts that if all the~$B_i$ are connected, then some orientation of~$\{C, D\}$ is also smaller than~$(A_i, B_i)$ for all but finitely many~$i \in \N$.

\begin{lemma}\label{lem:ExhaustiveFiniteOrderBelowLimit}
    Let~$((A_i,B_i))_{i \in \N}$ be a strictly increasing sequence of separations of a graph~$G$ such that~$G[B_i]$ is connected for every~$i \in \N$, and let~$\{C,D\}$ be a finite-order separation of~$G$.
    If~$((A_i,B_i))_{i \in \N}$ is exhaustive, then there is an~$I \in \N$ such that either~$(C,D) \le (A_i,B_i)$ for all~$i \ge I$ or~$(D,C) \le (A_i,B_i)$ for all~$i \ge I$.
\end{lemma}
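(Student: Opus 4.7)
The plan is to combine exhaustiveness with the connectedness of each $G[B_i]$: exhaustiveness will eventually place the finite separator $C \cap D$ inside $A_i \setminus B_i$, and then connectedness will pin $B_i$ to a single side of the separation $\{C, D\}$. The main obstacle I anticipate is not any individual step but rather the consistency question, i.e.\ ensuring that the side occupied by $B_i$ does not flip as $i$ grows; this will ultimately follow from the fact that $(V(G), \emptyset)$ can never appear in a strictly increasing sequence.

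Concretely, since the sequence is exhaustive, its limit $(A, B)$ satisfies $A = V(G)$ and $B = \emptyset$. Applying \cref{lem:PushingLemma} to the finite set $X := C \cap D \subseteq A$ yields an index $I \in \N$ such that $C \cap D \subseteq A_i \setminus B_i$ for every $i \ge I$. In particular, each such $B_i$ is disjoint from $C \cap D$, so $B_i \subseteq (C \setminus D) \cup (D \setminus C)$. Because $\{C, D\}$ is a separation, no edge of $G$ joins $C \setminus D$ and $D \setminus C$, and the connectedness of $G[B_i]$ then forces $B_i$ to lie entirely inside one of these two sets.

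To see that the chosen side is the same for every $i \ge I$, I would first observe that no $B_i$ is empty: if $B_j = \emptyset$ for some $j$, then $(A_j, B_j) = (V(G), \emptyset)$ is the maximum of the partial order, contradicting that the sequence is strictly increasing past index $j$. Hence $B_I$ is a non-empty subset of exactly one of $C \setminus D$ or $D \setminus C$, and the monotonicity $B_{i+1} \subseteq B_i$, immediate from $(A_i, B_i) \le (A_{i+1}, B_{i+1})$, propagates this classification forward to every $i \ge I$. If $B_i \subseteq D \setminus C$ throughout, then $V(G) \setminus A_i \subseteq B_i$ is disjoint from $C$, so $C \subseteq A_i$; combined with $B_i \subseteq D$ this is precisely $(C, D) \le (A_i, B_i)$. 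The opposite side symmetrically yields $(D, C) \le (A_i, B_i)$, completing the argument.
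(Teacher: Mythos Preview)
Your proof is correct and follows essentially the same approach as the paper: use exhaustiveness and \cref{lem:PushingLemma} to get $C\cap D\subseteq A_I\setminus B_I$, then use connectedness of $G[B_I]$ to force $B_I$ into one side of $\{C,D\}$, and conclude. The only difference is that your consistency discussion (non-emptiness of the $B_i$ and the side not flipping) is unnecessary: once $B_I\subseteq D\setminus C$, say, you already have $(C,D)\le(A_I,B_I)$, and since the sequence is increasing, transitivity of $\le$ immediately yields $(C,D)\le(A_i,B_i)$ for all $i\ge I$---this is exactly how the paper finishes.
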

\begin{proof}
    Since~$((A_i, B_i))_{i \in \N}$ is exhaustive, we have~$B = \emptyset$ and hence~$C \cap D \subseteq A = A \setminus B$.
    By assumption, $C \cap D$ is finite, so we can apply~\cref{lem:PushingLemma} to~$C \cap D$ and obtain an~$I \in \N$ with~$C \cap D \subseteq A_I \setminus B_I$.
    Since~$G[B_I]$ is connected by assumption and~$C \cap D$ is disjoint from~$B_I$, this implies that~$B_I$ is contained either in~$C \setminus D$ or in~$D \setminus C$.
    If~$B_I \subseteq C \setminus D$, then~$A_I \supseteq V(G) \setminus B_I \supseteq D$ and hence~$(D, C) \le (A_I, B_I)$; analogously, we have~$(C, D) \le (A_I, B_I)$ if~$B_I \subseteq D \setminus C$.
    The claim then follows directly from the fact that the sequence~$((A_i, B_i))_{i \in \N}$ is increasing.
\end{proof}

Secondly, we consider the case that~$((A_i, B_i))_{i \in \N}$ is non-exhaustive.
For sequences of tight separations in connected locally finite graphs, \cref{lem:FiniteOrderBelowLimit} then asserts that~$(C, D)$ is smaller than all but finitely many~$(A_i, B_i)$.

\begin{lemma} \label{lem:FiniteOrderBelowLimit}
    Let~$((A_i, B_i))_{i \in \N}$ be a strictly increasing sequence of tight separations of a connected locally finite graph~$G$, let~$(A, B)$ be its limit, and let~$(C, D)$ be any finite-order separation of~$G$.
    If~$((A_i, B_i))_{i \in \N}$ is non-exhaustive and~$(C, D) \le (A, B)$, then there exists an~$I \in \N$ such that
    $(C, D) \le (A_I, B_I)$.
\end{lemma}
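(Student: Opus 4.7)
The plan is to argue by contradiction. For the reduction, I would invoke \cref{lem:NestedWithCorners}: $(C,D) \leq (A,B)$ yields $C \cap B \subseteq (C \cap D) \cap (A \cap B)$, and hence $C \setminus D \subseteq A \setminus B$. Applying \cref{lem:PushingLemma} to the finite set $C \cap D \subseteq A$ then provides some $I_1 \in \N$ with $(C \cap D) \cap (A_i \cap B_i) = (C \cap D) \cap (A \cap B)$ and $C \cap D \subseteq A_i$ for every $i \geq I_1$. A short check via \cref{lem:NestedWithCorners} shows that for $i \geq I_1$, the condition $(C \setminus D) \cap B_i = \emptyset$ already forces $(C,D) \leq (A_i, B_i)$, so it suffices to prove the former. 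Assuming the contrary, since $C \setminus D \subseteq A \setminus B$ is disjoint from $B = \bigcap_i B_i$, the decreasing sequence $((C \setminus D) \cap B_i)_i$ has empty intersection yet is never empty, so each term must be infinite.

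The main structural input I would exploit is the tightness of the $(A_i, B_i)$. For each $i$, let $K_i$ denote the tight component of $G - (A_i \cap B_i)$ contained in $B_i \setminus A_i$, so that $N_G(K_i) = A_i \cap B_i$. For $i \geq I_1$, $V(K_i)$ is disjoint from $C \cap D$, so $K_i$ is contained in a single component of $G - (C \cap D)$, either within $C \setminus D$ (``$C$-type'') or within $D \setminus C$ (``$D$-type''). Combining \cref{lem:LimitSepHasInfiniteOrder} (giving $A \cap B \neq \emptyset$) with \cref{lem:limitsepispseudotight} produces a vertex $w \in B \setminus A \subseteq D \setminus C$ lying in $K_i$ for infinitely many $i$; at each such $i \geq I_1$, $K_i$ is of $D$-type. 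The heart of the argument is then as follows: at every such index $i$, any $v \in (C \setminus D) \cap (A_i \cap B_i)$ would, by $N_G(K_i) = A_i \cap B_i$, be adjacent to $K_i \subseteq D \setminus C$, producing a forbidden edge between the two sides of $\{C,D\}$. Hence $(C \setminus D) \cap (A_i \cap B_i) = \emptyset$, every $v \in (C \setminus D) \cap B_i$ lies in $B_i \setminus A_i$, and a brief further separation argument forces the component $L^v_i$ of $G - (A_i \cap B_i)$ through $v$ to satisfy $V(L^v_i) \subseteq C \setminus D$ and $N_G(L^v_i) \subseteq C \cap A_i \cap B_i \subseteq C \cap D$. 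So $L^v_i$ is in fact an entire component of $G - (C \cap D)$ sitting inside $B_i \setminus A_i$.

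The finishing blow is a finiteness count. Since $G$ is connected and locally finite and $C \cap D$ is finite, $G - (C \cap D)$ has only finitely many components $\cC_1, \ldots, \cC_n$. Each $\cC_\alpha \subseteq C \setminus D$ is contained in $A$, so some vertex of $\cC_\alpha$ enters $A_j$ at some finite index $m_\alpha$, after which $V(\cC_\alpha)$ cannot lie entirely inside $B_j \setminus A_j$. Taking $M := \max_\alpha m_\alpha$, no component of $G - (C \cap D)$ fits in $B_i \setminus A_i$ once $i \geq M$, contradicting the existence of such an $L^v_i$ at the infinitely many $i \geq \max\{I_1, M\}$ at which $K_i$ is of $D$-type. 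The conceptually subtle step I expect to be the $D$-typeness analysis of the middle paragraph; once that is in place, the endgame is essentially a counting argument driven by local finiteness.
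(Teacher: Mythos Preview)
Your argument is correct. The overall shape matches the paper's proof up to and including the step where you use \cref{lem:limitsepispseudotight} to find a tight component $K_i \subseteq B_i \setminus A_i$ of $D$-type for infinitely many $i$, and then deduce $(C\setminus D)\cap(A_i\cap B_i)=\emptyset$ from $N_G(K_i)=A_i\cap B_i$; this is exactly the paper's treatment of its set~$Y_i$. (One small imprecision: there may be several tight components in $B_i\setminus A_i$, so you should let \cref{lem:limitsepispseudotight} determine~$K_i$ rather than fixing it beforehand. Also, the observation that each $(C\setminus D)\cap B_i$ is infinite is correct but not used.)

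Where you genuinely diverge is in handling the remaining vertices, those in $(C\setminus D)\cap(B_i\setminus A_i)$. The paper works with the corner separation $(A_i\cup D,\,C\cap B_i)$, shows its separator is $S_i=(A_i\cap B_i)\cap(C\cap D)$, and argues that any vertex left over must be a neighbour of the eventually stable set $S=(A\cap B)\cap(C\cap D)$; local finiteness then pushes the finitely many neighbours of~$S$ into~$A_i$. You instead observe that the whole component $L^v_i$ of $G-(A_i\cap B_i)$ through such a vertex is already a full component of $G-(C\cap D)$ inside $C\setminus D$, and finish by the finiteness of the component set of $G-(C\cap D)$. Both endgames hinge on local finiteness, but applied to different finite objects ($N_G(S)$ versus the components of $G-(C\cap D)$); your component-based version is arguably more conceptual, while the paper's version stays closer to a direct corner-diagram computation.
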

\begin{proof}
    The sequence~$((A_i, B_i))_{i \in \N}$ is increasing, so it suffices to find a single~$I \in \N$ with~$(C, D) \le (A_I, B_I)$.
    To do so, let~$S_i := (A_i \cap B_i) \cap (C \cap D)$ for every~$i \in \N$.
    We show the existence of some~$I \in \N$ with \hbox{$(C \cap B_I) \setminus S_I = \emptyset$}, which then yields the claim by~\cref{lem:NestedWithCorners}.
    More precisely, we successively find conditions on~$i \in \N$ for the emptiness of each of the three sets~$X_i := ((C \cap B_i) \setminus S_i) \cap D$, $Y_i := ((C \cap B_i) \setminus S_i) \cap A_i$ and~$Z_i := ((C \cap B_i) \setminus S_i) \setminus (A_i \cup D)$ (see \cref{fig:cornerdiagram}), which partition~$(C \cap B_i) \setminus S_i$, and finally give some~$I \in \N$ satisfying all these conditions.

    \begin{figure}[ht]
        \centering
        \includegraphics{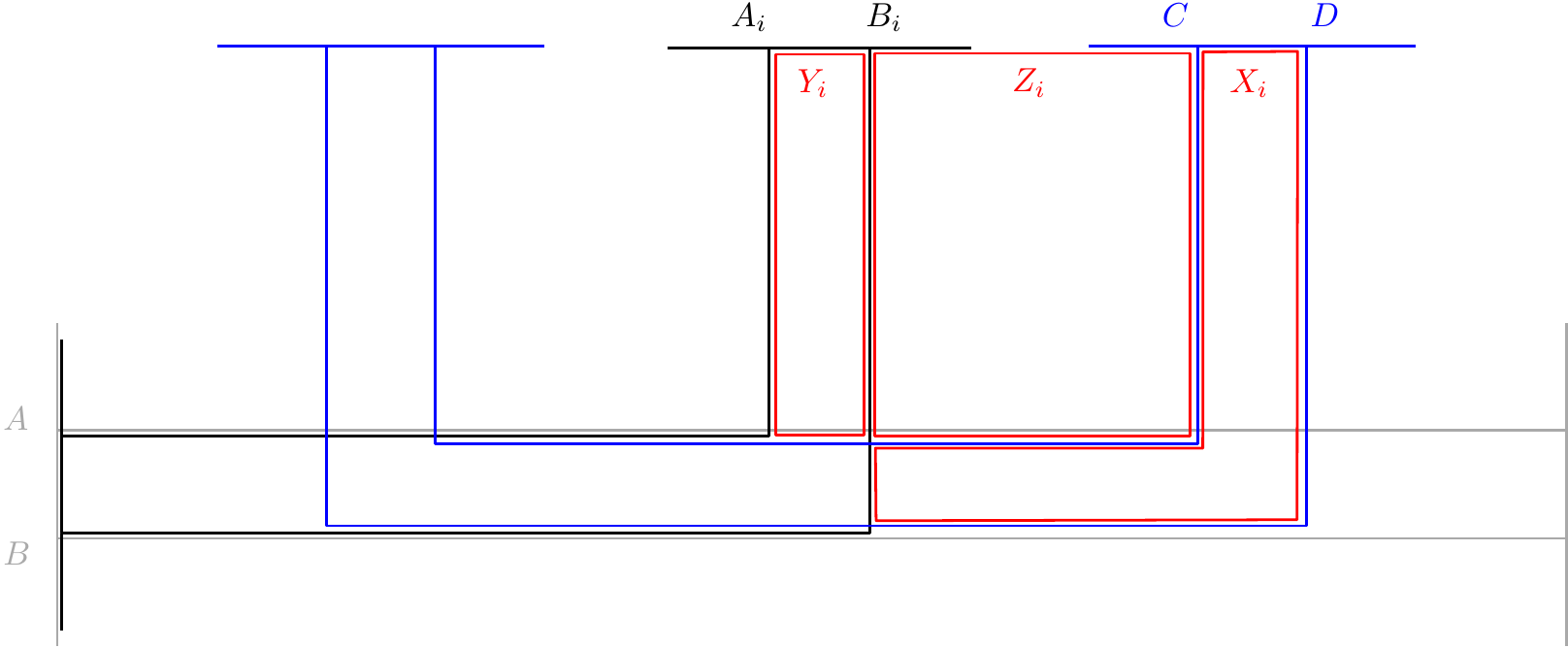}
        \caption{A diagram depicting the three sets $X_i$, $Y_i$ and $Z_i$ from the proof of \cref{lem:FiniteOrderBelowLimit}.}
        \label{fig:cornerdiagram}
    \end{figure}

    First, we consider the set~$X_i = ((C \cap B_i) \setminus S_i) \cap D$.
    Since~$(C, D) \le (A, B)$, we have~$C \cap D \subseteq A$.
    So since~$C \cap D$ is finite, \cref{lem:PushingLemma} yields~$I_1 \in \N$ such that~$C \cap D \subseteq A_i$ for all~$i \ge I_1$.
    This immediately implies $X_i = (C \cap D) \cap (B_i \setminus A_i) = \emptyset$ for all~$i \ge I_1$.
    
    Next, we turn to~$Y_i = ((C \cap B_i) \setminus S_i)  \cap A_i$.
    Recall that~$A \cap B \neq \emptyset$ by~\cref{lem:LimitSeparatorIsNonEmpty}.
    So~\cref{lem:limitsepispseudotight} yields a neighbour~$w \in B \setminus A$ of~$A \cap B$ and an infinite set~$\cI \subseteq \N$ such that~$w$ is contained in a tight component~$K_i$ of~$G - (A_i \cap B_i)$ with~$V( K_i ) \subseteq B_i \setminus A_i$ for all~$i \in \cI$.
    Now let~$i \in \cI$ with~$i \ge I_1$, and suppose for a contradiction that~$
    Y_i$ is non-empty.
    Since~$N_G(K_i) = A_i \cap B_i$, there thus exists a path~$P$ from~$w$ to the set~$Y_i 
    \subseteq A_i \cap B_i$ whose vertices lie in~$K_i$ except its last vertex~$u$.
    Note that~$u \in C \setminus D$, as~$u \in Y_i = (C \cap B_i \cap A_i) \setminus D$, and~$w \in D \setminus C$, as~$w \in B \setminus A$ and~$(C, D) \le (A, B)$.
    Moreover, the path~$P$ does not meet~$C \cap D$ as~$(C \cap D) \cap K_i \subseteq (C \cap D) \cap (B_i \setminus A_i) = X_i$ is empty for~$i \ge I_1$, as shown above.
    Altogether, $P$ starts in~$w \in D \setminus C$ and ends in~$u \in C \setminus D$, but~$P$ avoids~$C \cap D$, which contradicts that~$(C, D)$ is a separation of~$G$.

    It remains to consider~$Z_i = ((C \cap B_i) \setminus S_i) \setminus (A_i \cup D) = (C \cap B_i) \setminus (A_i \cup D)$.
    We first prove that if $Z_i$ is non-empty for~$i \in \cI$ with~$i \ge I_1$, then it must contain a neighbour of~$S_i$.
    Secondly, we show that~$Z_i$ cannot contain a neighbour of~$S_i$ for large enough~$i$, which yields the desired emptiness of~$Z_i$ for large enough~$i \in \cI$.
    
    First, let us prove that if~$Z_i$ is non-empty for~$i \in \cI$ with~$i \ge I_1$, then it contains a neighbour of~$S_i$.
    It is easy to check that $(A_i \cup D, C \cap B_i)$ is again a separation of~$G$ with separator~$(C \cap B_i) \cap (A_i \cup D)$.
    As shown above, the two sets~$X_i = ((C \cap B_i) \setminus S_i) \cap A_i$ and~$Y_i = ((C \cap B_i) \setminus S_i) \cap D$ are empty since~$i \in \cI$ with~$i \ge I_1$.
    Thus, the separator of $(A_i \cup D, C \cap B_i)$ is in fact~$S_i$.
    Both~$Z_i = (C \cap B_i) \setminus (A_i \cup D)$ and~$(A_i \cup D) \setminus (C \cap B_i)$ are non-empty, where the latter follows from~$(A_i, B_i)$ being tight and hence~$A_i \setminus B_i$ being non-empty.
    So since~$(A_i \cup D, C \cap B_i)$ is a separation of the connected graph~$G$, its separator~$S_i$ has to be non-empty and~$Z_i$ contains a neighbour of~$S_i$.
    
    Secondly, let us show that no neighbour of $S_i$ is contained in~$Z_i$ for large enough~$i$.
    By applying~\cref{lem:PushingLemma} to the set~$C \cap D$, which is finite since~$(C, D)$ has finite order, we obtain~$I_2 \in \N$ such that~$S_i$ equals $S := (A \cap B) \cap (C \cap D)$ for all~$i \ge I_2$.
    Thus, it suffices to argue that no neighbour of~$S$ is contained in~$Z_i$ for large~$i$.
    Consider any neighbour~$u$ of~$S$ which is contained in~$C$, and hence in~$A$, since~$(C, D) \le (A, B)$.
    Then there exists~$I_u \in \N$ such that~$u \in A_i$ for all~$i \ge I_u$, and hence~$u \notin B_i \setminus A_i \supseteq (C \cap B_i) \setminus (A_i \cup D) = Z_i$ for~$i \ge I_u$.
    Let~$I_3$ be the maximum over all~$I_u$, and note that this maximum exists since the finite set~$S$ has only finitely many neighbours in the locally finite graph~$G$.
    Then~$Z_i$ contains no neighbour of~$S$ for all~$i \ge I_3$.
    
    Choosing~$I \in \cI$ with~$I \ge \max \{I_1, I_2, I_3\}$ thus completes the proof.
\end{proof}

\section{The interlacing theorem} \label{sec:InterlacingMethod}

Interlaced strictly increasing sequences of separations have the same limit, as~\cref{lem:InterlacingLemma} shows.
So when we investigate the limit separation of a strictly increasing sequence of separations, then we may instead consider it as the limit of an interlaced sequence with stronger structural properties.
One example for this approach is the combination of~\cref{lem:InterlacingLemma} and~\cref{lem:BoundedOrder} which shows that, given a non-exhaustive strictly increasing sequence, there exists an interlaced sequence, and even a subsequence of the given one, whose separations have strictly increasing order.
We remark that this technique was already crucial to the proof of~\cite{GraphDec}*{Lemma~7.1}.

In this section we introduce the \emph{interlacing theorem} which enables us to use the above described method of interlaced sequences for strictly increasing sequences of `tangle-distinguishing' separations.
Before we are ready to state the interlacing theorem, we introduce the notion of pre-tangles; with all the wording below, we imitate the special case of tangles~(cf.~\cite{DiestelBook16}*{Section~12.5}).
A~set~$O$ of oriented separations of a graph~$G$ is \emph{consistent} if we do not have~$(B, A) \le (C, D)$ for any two~$(A, B), (C, D) \in O$ with~$\{A, B\} \neq \{C, D\}$.
A~set~$P$ of oriented separations of~$G$ is a~\emph{pre-tangle in~$G$} if there exists~$k \in \N \cup \{\aleph_0\}$ such that~$P$ is a consistent orientation of the set of all the separations of~$G$ of order less than~$k$.
It is immediate from this definition that if a pre-tangle~$P$ in~$G$ orients some separation, then it also orients every separation of at most the same order.  

A separation of~$G$ \emph{distinguishes} two pre-tangles~$P$ and~$P'$ in~$G$ if both~$P$ and~$P'$ contain one of its orientations but not the same, and the separation does so~\emph{efficiently} if it has minimum order among all the separations distinguishing~$P$ and~$P'$.
Note that every separation which distinguishes two pre-tangles has finite order by the definition of pre-tangle.
Given a set~$\cP$ of pre-tangles in~$G$, a set~$N$ of separations of~$G$ \emph{(efficiently) distinguishes}~$\cP$ if every two pre-tangles in~$\cP$ that are distinguished by some separation of~$G$  are (efficiently) distinguished by some separation in~$N$.
Moreover, a separation of~$G$ is \emph{$\cP$-relevant} if it efficiently distinguishes some two pre-tangles in~$\cP$.

The interlacing theorem roughly speaking asserts that, given a set~$\cP$ of pre-tangles in a graph~$G$ and a nested set~$N$ of~$\cP$-relevant separations which distinguishes~$\cP$ efficiently, every non-exhaustive strictly increasing sequence in~$\vN$ admits an interlaced sequence in~$\vN$ whose~$\cP$-relevance is witnessed in a very strong sense by a corresponding sequence in~$\cP$.
More precisely, the interlacing theorem reads as follows.
\begin{theorem}[Interlacing Theorem] \label{lem:ProfileInterlacing}
    Let~$\cP$ be a set of pre-tangles in a connected locally finite graph~$G$, let~$N$ be a nested set of~$\cP$-relevant proper separations of~$G$ which efficiently distinguishes~$\cP$, and let $((A_i, B_i))_{i \in \N}$ be a strictly increasing sequence in~$\vN$.
    If~$((A_i, B_i))_{i \in \N}$ is non-exhaustive, then there exists a strictly increasing sequence~$((A_i', B_i'))_{i \in \N}$ in~$\vN$ that is interlaced with~$((A_i, B_i))_{i \in \N}$ and a sequence~$(P_i)_{i \in \N}$ in~$\cP$ such that
    \begin{enumerate}[label=(IM\arabic*), leftmargin=1.5cm]
        \item $(B_i', A_i') \in P_i$ and~$(A_i', B_i') \in P_{i+1}$ for all~$i \in \N$, \label{item:Orientation}
        \item every minimal-order separation among~$\{A_i', B_i'\}, \dots, \{A_{j-1}', B_{j-1}'\}$ efficiently distinguishes~$P_i$ \linebreak and~$P_j$ for all~$i, j \in \N$ with~$i < j$. \label{item:DistinguishingSequence}
    \end{enumerate}
    Moreover, the~$\{A_i', B_i'\}$ can be chosen such that~$(|A_i', B_i'|)_{i \in \N}$ is strictly increasing, and then~$\{A_i', B_i'\}$ \linebreak efficiently distinguishes~$P_i$ from every~$P_j$ with $j > i$.
\end{theorem}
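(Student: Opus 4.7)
To prove the Interlacing Theorem, the plan is to construct the two sequences simultaneously by induction on~$i$, aiming directly for the moreover clause (strictly increasing orders), which then implies the stated form of~\cref{item:DistinguishingSequence}.

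\textbf{Preprocessing.} Since every $\cP$-relevant separation may be taken tight (a standard tangle-theoretic argument), \cref{lem:BoundedOrder} together with \cref{lem:InterlacingLemma} allows me to pass from $((A_i, B_i))_{i \in \N}$ to a subsequence whose orders $|A_i, B_i|$ are strictly increasing; being interlaced with the original, this subsequence may replace it. For each~$i$, the $\cP$-relevance of $\{A_i, B_i\}$ furnishes pre-tangles $P_i^-, P_i^+ \in \cP$ that $\{A_i, B_i\}$ efficiently distinguishes, with $(B_i, A_i) \in P_i^-$ and $(A_i, B_i) \in P_i^+$.

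\textbf{Inductive construction.} Initialize with $(A_0', B_0') := (A_0, B_0)$, $P_0 := P_0^-$ and $P_1 := P_0^+$. Given $(A_0', B_0'), \ldots, (A_{j-1}', B_{j-1}')$ and $P_0, \ldots, P_j$ satisfying the desired properties through stage~$j{-}1$ (in particular $(A_{j-1}', B_{j-1}') \in P_j$), pick an index~$n$ so large that both $(A_n, B_n) > (A_{j-1}', B_{j-1}')$ and $|A_n, B_n| > |A_{j-1}', B_{j-1}'|$; choose $P_{j+1} \in \cP$ among the pre-tangles naturally attached to~$(A_n, B_n)$ (typically $P_n^+$, switching to $P_n^-$ or enlarging~$n$ if needed to secure distinguishability from~$P_j$); and let $(A_j', B_j') \in \vN$ be an efficient distinguisher of~$P_j$ and~$P_{j+1}$ supplied by~$N$, oriented so that $(A_j', B_j') \in P_{j+1}$.

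\textbf{Verification and main obstacle.} The technical heart of the proof is to show the strict inequalities $(A_{j-1}', B_{j-1}') \lneq (A_j', B_j')$ and $|A_{j-1}', B_{j-1}'| < |A_j', B_j'|$: the nestedness of $\sigma := \{A_j', B_j'\}$ in~$N$ with both $(A_{j-1}', B_{j-1}')$ and $(A_n, B_n)$, combined with the consistency of $P_j \ni (A_{j-1}', B_{j-1}')$ and $P_{j+1} \ni (A_j', B_j'), (A_n, B_n)$, reduces the a priori possible nesting relations to the desired one by a case analysis, and the strict order inequality is forced by taking~$n$ large enough that any distinguisher of order $\leq |A_{j-1}', B_{j-1}'|$ would contradict the efficiency of an earlier~$\{A_k', B_k'\}$. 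With this in hand, \cref{item:Orientation} is immediate, interlacing with the original sequence follows from the nestedness analysis together with \cref{lem:FiniteOrderBelowLimit}, and the moreover clause is proved by induction on $k - j$: the base $k = j + 1$ is built in, and for $k > j + 1$, $\{A_j', B_j'\}$ distinguishes~$P_j$ and~$P_k$ because $(A_j', B_j') \in P_{j+1}$ propagates to $(A_j', B_j') \in P_k$ via consistency (as $(A_j', B_j') \leq (A_{k-1}', B_{k-1}') \in P_k$ and the rank of~$P_k$ exceeds $|A_j', B_j'|$), while any distinguisher $\sigma'$ of strictly smaller order, once oriented by~$P_{j+1}$, would either distinguish $P_j$ from $P_{j+1}$ (contradicting efficiency of~$\{A_j', B_j'\}$) or distinguish $P_{j+1}$ from $P_k$ (contradicting the inductive moreover for the closer pair $(j+1, k)$).
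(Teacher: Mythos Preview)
Your overall strategy---building the sequence of separations and the sequence of pre-tangles simultaneously by picking, at each step, an \emph{efficient} $N$-distinguisher of the current pre-tangle~$P_j$ from a freshly chosen~$P_{j+1}$---is genuinely different from the paper's approach. The paper instead first enlarges the original sequence by inserting at most one extra separation between consecutive terms so that the resulting sequence becomes \emph{strongly $\cP$-relevant} (\cref{lem:InterlacingStrictlyIncreasingOrder}); this automatically keeps the original sequence as a subsequence, so interlacing is free. Only afterwards are the pre-tangles read off (\cref{lem:InterlacingStronglyRelevant}), \cref{item:DistinguishingSequence} is deduced from its consecutive version (\cref{lem:InterlacingDistinguishingSequence}), and the strictly increasing orders are obtained by a \emph{separate} thinning step (\cref{lem:InterlacingThinOutForStrictlyIncreasing}).

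The main gap in your argument is the claim that $|A_j',B_j'|>|A_{j-1}',B_{j-1}'|$ can be \emph{forced} by choosing~$n$ large. There is no mechanism for this: once $P_j$ is fixed, the order of an efficient distinguisher of~$P_j$ from any~$P_{j+1}$ is bounded above only by the order of~$P_j$, and nothing prevents it from being \emph{smaller} than~$|A_{j-1}',B_{j-1}'|$. Your suggested contradiction---that a distinguisher of order $\le |A_{j-1}',B_{j-1}'|$ would also distinguish some earlier pair $P_k,P_{k+1}$ and thus violate the efficiency of~$\{A_k',B_k'\}$---does not go through, because such a separation may well be oriented the same way by~$P_{j-1}$ and~$P_j$ (there is no consistency obstruction to $(B_j',A_j')\in P_{j-1}\cap P_j$). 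This is precisely why the paper postpones the strictly-increasing-order property to a final thinning lemma rather than building it into the induction.

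There are two secondary issues worth flagging. First, the case analysis pinning down $(A_{j-1}',B_{j-1}')<(A_j',B_j')$ must also exclude the nesting relation $(A_{j-1}',B_{j-1}')\le (B_j',A_j')$; this can be done (using properness and the relation to~$(A_n,B_n)$, much as in the paper's proof of \cref{lem:InterlacingStrictlyIncreasingOrder}), but your sketch does not address it. Second, invoking \cref{lem:FiniteOrderBelowLimit} for interlacing requires both that the original sequence consist of \emph{tight} separations and that each $(A_j',B_j')$ lie below the limit~$(A,B)$; neither is given by the hypotheses of \cref{lem:ProfileInterlacing}, and the latter needs a nestedness argument you have not supplied.
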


\noindent The remainder of this section is devoted to the proof of~\cref{lem:ProfileInterlacing}.

For better readability, we shall in the following denote separations by lower-case letters such as~$s$ and its orientations as~$\vs$ and~$\sv$.
Once we denote one orientation of~$s$ as~$\vs$, we write~$\sv$ for the other one, and vice-versa.
Analogously, we denote oriented separations as lower-case letters with a forward or backward arrow on top such as~$\vs$ and~$\sv$, and we write~$s$ for their underlying separation.
The order of a separation~$s$ is denoted by~$|s|$.

The proof of~\cref{lem:ProfileInterlacing} is divided into the four~\cref{lem:InterlacingDistinguishingSequence,lem:InterlacingStronglyRelevant,lem:InterlacingStrictlyIncreasingOrder,lem:InterlacingThinOutForStrictlyIncreasing}; each of these lemmas may be of independent interest for the analysis of strictly increasing sequences.
As one ingredient for their proofs, we have the following easy observation about those separations in a nested set that distinguish two fixed pre-tangles. \looseness=-1
\begin{lemma} \label{obs:DistinguisherFormChain}
    Let~$N$ be a nested set of separations of a graph~$G$.
    Given two pre-tangles~$P$ and~$Q$ in~$G$, the set of all the oriented separations~$\vs \in P \cap \vN$ with~$\sv \in Q$ forms a chain.
\end{lemma}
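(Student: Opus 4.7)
The plan is straightforward: pick two distinct elements $\vs, \vt$ of the set in question, so $\vs, \vt \in P \cap \vN$ with $\sv, \tv \in Q$, and show that $\vs$ and $\vt$ are comparable with respect to the partial order~$\le$ of oriented separations. Since $N$ is nested, the underlying separations~$s$ and~$t$ are nested, so at least one orientation of~$s$ is $\le$-comparable to some orientation of~$t$. This leaves four a~priori cases: $\vs \le \vt$, $\vs \le \tv$, $\sv \le \vt$, or $\sv \le \tv$.

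Two of these cases immediately give the conclusion: $\vs \le \vt$ is precisely the desired comparability, and $\sv \le \tv$ is equivalent to $\vt \le \vs$ by reversing both sides. The remaining two ``cross'' cases I would rule out using the consistency of the pre-tangles. Suppose $\sv \le \vt$; then $\vs, \vt \in P$ are two elements with distinct underlying separations, and $\sv \le \vt$ is exactly the configuration forbidden by the definition of consistency of~$P$, a contradiction. Symmetrically, $\vs \le \tv$ is impossible by consistency of~$Q$ applied to the distinct $\sv, \tv \in Q$, since the reverse of~$\sv$ is~$\vs$ and consistency then forbids $\vs \le \tv$.

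I do not foresee any real obstacle; the argument is a clean case analysis using only the definition of nestedness of~$N$ together with the definition of consistency, applied once on the $P$~side and once on the $Q$~side. The only conceptual point worth highlighting is the symmetric use of $P$ and $Q$: the two cross cases are precisely those in which one of the pre-tangles $P$ or $Q$ ends up containing the ``wrong'' pair of orientations of~$s$ and~$t$, which consistency of that pre-tangle forbids.
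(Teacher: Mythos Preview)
Your proposal is correct and follows essentially the same argument as the paper: a four-case analysis coming from nestedness of $s$ and $t$, with the two ``good'' cases giving comparability directly and the two ``bad'' cases eliminated by consistency of~$P$ and~$Q$ respectively. The paper's proof is the same argument with strict inequalities and different variable names.
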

\begin{proof}
    In order to show that this set is a chain, we have to prove that for any two~$\vr, \vs \in P \cap \vN$, we have either~$\vr < \vs$ or~$\vs < \vr$.
    Since~$r$ and~$s$ are contained in~$N$ and hence nested, one of $\vr < \vs$, $\vs < \vr$, $\rv < \vs$ and $\vs < \rv$ holds.
    The first and the second case are as desired, so it remains to show that the third and fourth one do not occur.
    We cannot have~$\rv < \vs$, as both~$\vr$ and~$\vs$ are contained in~$P$ which is consistent.
    For~$\vs < \rv$, note that this implies~$\vr < \sv$ which cannot be the case by the consistency of~$Q$, in which both~$\sv$ and~$\rv$ are contained.
\end{proof}

The first~\cref{lem:InterlacingDistinguishingSequence} shows that for the proof of \cref{lem:ProfileInterlacing}, we can replace~\cref{item:DistinguishingSequence} with a weaker property~\cref{item:DistinguishingPairwise} in terms of consecutive pre-tangles.
\begin{lemma} \label{lem:InterlacingDistinguishingSequence}
    Let~$(\vs_i)_{i \in \N}$ be a strictly increasing sequence of separations of a graph~$G$, and let~$(P_i)_{i \in \N}$ be a sequence of pre-tangles in~$G$.
    If~$(\vs_i)_{i \in \N}$ and~$(P_i)_{i \in \N}$ satisfy~\cref{item:Orientation} and~\cref{item:DistinguishingPairwise}, where
    \begin{enumerate}[label=(IM\arabic*'), leftmargin=1.5cm]
        \setcounter{enumi}{1}
        \item the separation~$s_i$ efficiently distinguishes~$P_i$ and~$P_{i+1}$ for all~$i \in \N$,
        \label{item:DistinguishingPairwise}
    \end{enumerate}
    then they also satisfy~\cref{item:DistinguishingSequence}.
\end{lemma}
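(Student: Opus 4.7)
The plan is to fix integers $i < j$ and choose $k \in \{i, \ldots, j-1\}$ minimising $|s_k|$; I then need to show both that $s_k$ distinguishes $P_i$ and $P_j$ and that no separation of smaller order does so. As a preliminary observation, every $P_l$ with $i \leq l \leq j$ orients $s_k$: by~\cref{item:Orientation}, each such $P_l$ contains an orientation of some $s_m$ with $m \in \{i, \ldots, j-1\}$ (take $m = l$ for $l \leq j-1$ and $m = j-1$ for $l = j$), so the pre-tangle level of $P_l$ strictly exceeds $|s_m|$, which in turn is at least $|s_k|$ by minimality; hence $P_l$ orients $s_k$.

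The key step identifies how $P_i$ and $P_j$ orient $s_k$. For $P_i$: if $k = i$, then $\sv_k = \sv_i \in P_i$ directly by~\cref{item:Orientation}. If $k > i$, strict monotonicity gives $\vs_i < \vs_k$ and hence $\sv_k < \sv_i$, with distinct underlying separations. Since $\sv_i \in P_i$ by~\cref{item:Orientation}, assuming $\vs_k \in P_i$ would violate the consistency of $P_i$ through $\sv_k \leq \sv_i$, so we must have $\sv_k \in P_i$. A symmetric argument using $\vs_{j-1} \in P_j$ from~\cref{item:Orientation} yields $\vs_k \in P_j$. Thus $s_k$ distinguishes $P_i$ from $P_j$.

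To establish efficiency, I would take any separation $s^*$ distinguishing $P_i$ and $P_j$ and show $|s^*| \geq |s_k|$, splitting into two cases. If every $P_l$ with $i \leq l \leq j$ orients $s^*$, then the opposite orientations in $P_i$ and $P_j$ force some consecutive pair $P_l, P_{l+1}$ to orient $s^*$ oppositely; so $s^*$ distinguishes that pair, and by~\cref{item:DistinguishingPairwise} we get $|s^*| \geq |s_l| \geq |s_k|$. Otherwise, some $P_l$ does not orient $s^*$, so its pre-tangle level is at most $|s^*|$; but the level argument from the first paragraph shows this level strictly exceeds $|s_m|$ for some $m \in \{i, \ldots, j-1\}$, and thus $|s_k| \leq |s_m| < |s^*|$.

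The main obstacle I anticipate is the consistency argument in the second paragraph: recognising that~\cref{item:Orientation} together with the strict monotonicity of $(\vs_i)_{i \in \N}$ pins down the orientations of $s_k$ inside $P_i$ and $P_j$ purely from pre-tangle consistency, without any need for induction on $j - i$. Once that is seen, the remaining steps reduce to straightforward bookkeeping with pre-tangle levels and a short case analysis invoking~\cref{item:DistinguishingPairwise}.
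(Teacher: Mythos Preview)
Your proof is correct and follows essentially the same approach as the paper: pick a minimal-order $s_k$ among $s_i,\dots,s_{j-1}$, use \cref{item:Orientation} plus consistency to show $\sv_k\in P_i$ and $\vs_k\in P_j$, and then use \cref{item:DistinguishingPairwise} along the chain $P_i,P_{i+1},\dots,P_j$ for efficiency. The only cosmetic difference is in the efficiency step: the paper argues the contrapositive (any $r$ with $|r|<|s_k|$ is oriented identically by every consecutive pair, hence by $P_i$ and $P_j$), which automatically places you in your Case~1 and makes Case~2 unnecessary; your direct argument with the case split on whether all intermediate $P_l$ orient $s^*$ is equally valid but slightly longer.
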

\begin{proof}
    To show~\cref{item:DistinguishingSequence}, consider any two integers~$i, j \in \N$ with~$i < j$, and let~$\ell \in \{i, \dots, j-1\}$ such that~$s_\ell$ has minimal order among~$s_i, \dots, s_{j-1}$.
    Then both pre-tangles~$P_i$ and~$P_j$ in~$G$ orient~$s_\ell$, since~$P_i$ orients~$s_i$ by~\cref{item:Orientation} and~$|s_\ell| \le |s_i|$ by the choice of~$\ell$ and analogously for~$P_j$ and~$s_{j-1}$.
    Since~$\sv_i \in P_i$ and~$\vs_{j-1} \in P_j$ by~\cref{item:Orientation}, the consistency of~$P_i$ and~$P_j$ implies that~$\sv_\ell \in P_i$ and~$\vs_\ell \in P_j$, so~$s_\ell$ distinguishes~$P_i$ and~$P_j$.
    
    It remains to show that~$s_\ell$ distinguishes~$P_i$ and~$P_j$ even efficiently.
    Let~$r$ be an arbitrary separation of~$G$ with~$|r| < |s_\ell|$.
    By the choice of~$\ell$, we have~$|r| < |s_\ell| \le |s_k|$ for all~$k \in \{i, \dots, j-1\}$.
    So 
    for every~$k \in \{i, \dots, j-1\}$, both~$P_k$ and $P_{k+1}$ orient~$r$, and they both contain the same orientation of~$r$ by~\cref{item:DistinguishingPairwise}.
    Repeatedly applying this fact from~$k=i$ up to~$k=j-1$ implies that~$P_i$ and~$P_j$ contain the same orientation of~$r$.
    Thus, $r$ does not distinguish~$P_i$ and~$P_j$.
    So no separation of~$G$ of order less than~$|s_\ell|$ distinguishes~$P_i$ and~$P_j$; in particular, $s_\ell$ distinguishes~$P_i$ and~$P_j$ efficiently.
\end{proof}

The second~\cref{lem:InterlacingStronglyRelevant} asserts, in the setting of~\cref{lem:ProfileInterlacing}, that for every strictly increasing sequence~$(\vs_i)_{i \in \N}$ in~$\vN$, there exists a corresponding sequence in~$\cP$ such that they together satisfy~\cref{item:Orientation} and~\cref{item:DistinguishingPairwise} (and thus also~\cref{item:DistinguishingSequence} by~\cref{lem:InterlacingDistinguishingSequence}) -- if every pair~$\vs_i < \vs_{i+1}$ is~$\cP$-relevant in a strong form.
To make this precise, recall that a separation of a graph~$G$ is~\emph{$\cP$-relevant} for a set~$\cP$ of pre-tangles in~$G$ if it efficiently distinguishes some two pre-tangles in~$\cP$.
We now call a pair~$\vs < \vt$ of two oriented separations of~$G$ \emph{strongly~$\cP$-relevant} if there exist pre-tangles~$O, P, Q \in \cP$ such that
\begin{itemize}
    \item $s$ efficiently distinguishes~$O$ and~$P$ with~$\vs \in P$, and
    \item $t$ efficiently distinguishes~$P$ and~$Q$ with~$\tv \in P$.
\end{itemize}
A strictly increasing sequence~$(\vs_i)_{i \in \N}$ of separations of~$G$ is then called \emph{strongly~$\cP$-relevant} if every two consecutive separations~$\vs_i < \vs_{i+1}$ are strongly~$\cP$-relevant.

\begin{lemma} \label{lem:InterlacingStronglyRelevant}
    Let~$\cP$ be a set of pre-tangles in a graph~$G$, let~$N$ be a nested set of separations of~$G$ which efficiently distinguishes~$\cP$, and let~$(\vs_i)_{i \in \N}$ be a strictly increasing sequence of separations in~$\vN$.
    If~$(\vs_i)_{i \in \N}$ is strongly~$\cP$-relevant, then there exists a sequence~$(P_i)_{i \in \N}$ in~$\cP$ which together with~$(\vs_i)_{i \in \N}$ satisfies~\cref{item:Orientation} and~\cref{item:DistinguishingPairwise}.
\end{lemma}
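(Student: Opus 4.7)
The plan is to construct $(P_i)_{i \in \N}$ by harvesting the ``middle'' witnesses from the strong $\cP$-relevance of each consecutive pair in $(\vs_i)_{i \in \N}$. For each $i \in \N$, I invoke the strong $\cP$-relevance of the pair $(\vs_i, \vs_{i+1})$ to obtain witnesses $O_i, P_i^*, Q_i \in \cP$ with $\sv_i \in O_i$, $\vs_i, \sv_{i+1} \in P_i^*$, and $\vs_{i+1} \in Q_i$, such that $s_i$ efficiently distinguishes $O_i$ from $P_i^*$ and $s_{i+1}$ efficiently distinguishes $P_i^*$ from $Q_i$. I then set $P_0 := O_0$ and $P_{i+1} := P_i^*$ for every $i \in \N$. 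Condition~\cref{item:Orientation} follows immediately from these definitions: $\sv_0 \in P_0$, and for every $i \ge 0$ the middle witness $P_{i+1} = P_i^*$ contains both $\vs_i$ and $\sv_{i+1}$.

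For~\cref{item:DistinguishingPairwise}, the case $i = 0$ is by construction. For $i \ge 1$, the pre-tangles $P_i = P_{i-1}^*$ and $P_{i+1} = P_i^*$ orient $s_i$ oppositely (containing $\sv_i$ and $\vs_i$, respectively), so $s_i$ distinguishes them; the main obstacle will be to show efficiency. I plan to argue by contradiction. Suppose some separation $r$ with $|r| < |s_i|$ distinguishes $P_i$ from $P_{i+1}$. Since $N$ efficiently distinguishes $\cP$, I may replace $r$ by a separation in $N$ of no larger order, and so assume $r \in N$ oriented such that $\vr \in P_{i+1}$ and $\rv \in P_i$. The efficiency of $s_i$ for the pairs $\{P_i, Q_{i-1}\}$ and $\{O_i, P_{i+1}\}$ then forces $r$ to distinguish neither pair, from which I deduce $\rv \in Q_{i-1}$ and $\vr \in O_i$. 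Hence $r$ distinguishes $Q_{i-1}$ from $O_i$, which also orient $s_i$ oppositely via $\vs_i \in Q_{i-1}$ and $\sv_i \in O_i$.

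The decisive step is then to apply~\cref{obs:DistinguisherFormChain} to the pair $\{Q_{i-1}, O_i\}$: the oriented separations in $O_i \cap \vN$ whose reverses lie in $Q_{i-1}$ form a chain under $\le$ containing both $\sv_i$ and $\vr$, so either $\sv_i \le \vr$ or $\vr \le \sv_i$ holds. I close the argument by ruling out both alternatives using pre-tangle consistency: the first alternative $\sv_i \le \vr$ with $\vs_i, \vr \in P_{i+1}$ is precisely the forbidden pattern in the consistency of $P_{i+1}$; the second alternative $\vr \le \sv_i$ with $\sv_i, \rv \in P_i$ is precisely the forbidden pattern in the consistency of $P_i$. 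Thus no such $r$ exists, so $s_i$ efficiently distinguishes $P_i$ from $P_{i+1}$ and~\cref{item:DistinguishingPairwise} is established.
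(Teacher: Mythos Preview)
Your proof is correct and follows essentially the same approach as the paper: you choose the same sequence~$(P_i)$ (middle witnesses from strong $\cP$-relevance, with $P_0$ the initial ``$O$''), and your contradiction argument for \cref{item:DistinguishingPairwise} uses the same three ingredients (the chain observation \cref{obs:DistinguisherFormChain}, the efficiency of~$s_i$ for the two witnessing pairs, and consistency of pre-tangles). The only difference is the order in which you deploy them: the paper first applies \cref{obs:DistinguisherFormChain} to~$P_i$ and~$P_{i+1}$ to get $\vr$ and $\vs_i$ comparable, and then derives a contradiction from efficiency in each case; you instead first push the orientation of~$r$ into your $Q_{i-1}$ and $O_i$ via efficiency, then apply \cref{obs:DistinguisherFormChain} to this outer pair, and finish with a direct consistency contradiction in~$P_i$ or~$P_{i+1}$. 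This is a harmless reordering rather than a genuinely different route.
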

\begin{proof}
    Since~$(\vs_i)_{i \in \N}$ is strongly~$\cP$-relevant, there exist, for every~$i \in \N$, pre-tangles~$O_{i+1}$, $P_{i+1}$ and~$Q_{i+1}$ in~$\cP$~with \looseness=-1
    \begin{enumerate}
        \item \label{prop:Oi+1Pi+1}~$s_{i}$ efficiently distinguishes~$O_{i+1}$ and~$P_{i+1}$ with~$\vs_{i} \in P_{i+1}$, and
        \item \label{prop:Pi+1Qi+1}~$s_{i+1}$ efficiently distinguishes~$P_{i+1}$ and~$Q_{i+1}$ with~$\sv_{i+1} \in P_{i+1}$.
    \end{enumerate}
    Set~$P_0 := O_1$.
    Then the sequence~$(P_i)_{i \in \N}$ together with~$(\vs_i)_{i \in \N}$ immediately satisfies~\cref{item:Orientation} by~\labelcref{prop:Oi+1Pi+1,prop:Pi+1Qi+1}.
    We prove that they also satisfy~\cref{item:DistinguishingPairwise}.

    By~\cref{item:Orientation}, $s_i$ distinguishes~$P_i$ and~$P_{i+1}$.
    Thus, it remains to show that~$s_i$ does so efficiently.
    Suppose for a contradiction that this is not the case.
    We then have~$i \ge 1$ by the definition of~$P_0$ as~$O_1$ and~\cref{prop:Oi+1Pi+1}.
    Since~$N$ efficiently distinguishes~$\cP$, there is a separation~$r \in N$ that efficiently distinguishes~$P_i$ and~$P_{i+1}$; in particular~$|r| < |s_i|$.
    Let~$\vr$ be the orientation of~$r$ in~$P_{i+1}$, then~$\rv \in P_i$.
    Both~$\vs_i$ and~$\vr$ are contained \linebreak in~$P_{i+1} \cap \vN$, so~\cref{obs:DistinguisherFormChain} implies that either~$\vr < \vs_{i}$ or~$\vs_{i} < \vr$.
    We show that both cases lead to contradictions.

    First, suppose~$\vr < \vs_i$.
    Since~$i \ge 1$, the pre-tangle~$Q_i$ is defined, and we have~$\vr \in Q_i$, as~$\vs_i \in Q_i$ by the choice of~$Q_i$ and~$Q_i$ is consistent.
    But~$\rv \in P_i~$, so~$r$ distinguishes~$P_i$ and~$Q_i$.
    This then contradicts~\cref{prop:Pi+1Qi+1} as~$|r| < |s_i|$.

    Secondly, assume that~$\vs_i < \vr$.
    This implies~$\rv < \sv_i$.
    We proceed analogously to the first case, but for~$O_{i+1}$.
    We have~$\sv_i \in O_{i+1}$ by the choice of~$O_{i+1}$, and hence~$\rv \in O_{i+1}$, since~$O_{i+1}$ is consistent.
    But~$\vr \in P_{i+1}$, so~$r$ distinguishes~$O_{i+1}$ and~$P_{i+1}$ which contradicts~\cref{prop:Oi+1Pi+1} due to~$|r| < |s_i|$.
\end{proof}

The third~\cref{lem:InterlacingStrictlyIncreasingOrder} asserts, in the setting of~\cref{lem:ProfileInterlacing}, that a strictly increasing sequence of~$\cP$-relevant separations with strictly increasing orders is interlaced with a strongly~$\cP$-relevant strictly increasing sequence. \looseness=-1

\begin{lemma} \label{lem:InterlacingStrictlyIncreasingOrder}
    Let~$\cP$ be a set of pre-tangles in a graph~$G$, and let~$N$ be a nested set of~$\cP$-relevant proper separations of~$G$ which efficiently distinguishes~$\cP$.
    Let~$(\vs_i)_{i \in \N}$ be a strictly increasing sequence in~$\vN$.
    If the orders of the~$s_i$ are strictly increasing, then there exists a strictly increasing sequence~$(\vs_i')_{i \in \N}$ in~$\vN$ that contains~$(\vs_i)_{i \in \N}$ as a subsequence and is strongly~$\cP$-relevant.
\end{lemma}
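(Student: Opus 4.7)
The plan is to construct $(\vs_i')_{i \in \N}$ by inserting, between each consecutive pair $(\vs_i, \vs_{i+1})$ of the original sequence, finitely many oriented separations from $\vN$ so that every consecutive pair in the resulting chain is strongly $\cP$-relevant; the original $\vs_i$ will then reappear as a subsequence. Throughout, I will use that a pair $(\vs, \vt)$ with $\vs < \vt$ is strongly $\cP$-relevant exactly when $\cP^{\vs} \cap \cP^{\tv} \neq \emptyset$, where
\begin{equation*}
    \cP^{\vs} := \{P \in \cP : \vs \in P \text{ and } s \text{ efficiently distinguishes } P \text{ from some pre-tangle in } \cP\}.
\end{equation*}
The set $\cP^{\vs}$ is non-empty for every $s \in N$ by the $\cP$-relevance of $s$.

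Fix $i$ and consider the pair $(\vs_i, \vs_{i+1})$. If $\cP^{\vs_i} \cap \cP^{\sv_{i+1}} \neq \emptyset$, nothing needs to be inserted. Otherwise I pick $P \in \cP^{\vs_i}$ and $Q \in \cP^{\sv_{i+1}}$ with $P \neq Q$, and use that $N$ efficiently distinguishes $\cP$ to obtain some $r \in N$ efficiently distinguishing $P$ and $Q$; orienting $r$ so that $\vr \in P$ and $\rv \in Q$ gives $P \in \cP^{\vr}$ and $Q \in \cP^{\rv}$. A case analysis of the nested relative positions of $r$ with respect to $s_i$ and $s_{i+1}$, in which the properness of $s_i$ and $s_{i+1}$ rules out the degenerate ``wrong-side'' orderings (essentially the same argument that shows the interval $[\vs_i, \vs_{i+1}] \cap \vN$ is totally ordered under~$\leq$), then pins $\vr$ into the interval $[\vs_i, \vs_{i+1}]$, so I can insert $\vr$ there.

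The main obstacle is that after inserting $\vr$ the new sub-pairs $(\vs_i, \vr)$ and $(\vr, \vs_{i+1})$ may still fail to be strongly $\cP$-relevant: the pre-tangles $P$ and $Q$ sit on the wrong side of $r$ to serve directly as middle witnesses for these sub-pairs. My plan is to iterate the construction on each failing sub-pair, drawing new witness candidates from $\cP^{\vs_i} \cap \cP^{\rv}$ respectively $\cP^{\vr} \cap \cP^{\sv_{i+1}}$, and to prove termination by a minimality argument on the orders of the efficient distinguishers produced at each step, combined with the chain structure of $[\vs_i, \vs_{i+1}] \cap \vN$. Carrying out this bookkeeping carefully---tracking how the witness pre-tangles transform as insertions accumulate, and how the strictly increasing orders of the original sequence provide a controlled regime for the efficient distinguishers---will be the main technical content of the proof.
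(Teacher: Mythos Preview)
Your plan has the right shape---insert separations from $\vN$ between consecutive $\vs_i < \vs_{i+1}$---but there is a genuine gap at the point you yourself flag as ``the main obstacle'': you have no termination argument. After inserting $\vr$, you propose to recurse on the sub-pairs $(\vs_i,\vr)$ and $(\vr,\vs_{i+1})$, drawing new witnesses and new efficient distinguishers, and to control this by ``a minimality argument on the orders of the efficient distinguishers''. But nothing in your setup forces these orders to decrease, or bounds the length of the chain you are building inside $[\vs_i,\vs_{i+1}]\cap\vN$. With $P\in\cP^{\vs_i}$ and $Q\in\cP^{\sv_{i+1}}$ chosen arbitrarily, the order $|r|$ of an efficient $P$--$Q$ distinguisher is not controlled by $|s_i|$ or $|s_{i+1}|$; in particular your claim that $\vr$ lands in the interval $[\vs_i,\vs_{i+1}]$ already needs a nontrivial argument (for instance, nothing prevents $P$ from containing $\vs_{i+1}$, in which case an efficient $P$--$Q$ distinguisher might sit above $\vs_{i+1}$).

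The paper avoids the recursion entirely by choosing the witnesses more carefully. Writing $P_\ell,Q_\ell$ for pre-tangles efficiently distinguished by $s_\ell$ with $\sv_\ell\in P_\ell$ and $\vs_\ell\in Q_\ell$, the hypothesis $|s_\ell|<|s_{\ell+1}|$ forces $P_{\ell+1}$ to orient $s_\ell$, and consistency gives $\vs_\ell\in P_{\ell+1}$. Hence $s_\ell$ already distinguishes $P_\ell$ from $P_{\ell+1}$. If it does so efficiently, then $P_\ell,P_{\ell+1},Q_{\ell+1}$ witness strong $\cP$-relevance of $\vs_\ell<\vs_{\ell+1}$ and nothing is inserted. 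If not, one takes $r\in N$ with $|r|<|s_\ell|$ efficiently distinguishing $P_\ell$ and $P_{\ell+1}$; the order bound $|r|<|s_\ell|$ is exactly what lets one pin $\vr$ strictly between $\vs_\ell$ and $\vs_{\ell+1}$ and, crucially, shows that $r$ also \emph{efficiently} distinguishes $Q_\ell$ from $P_{\ell+1}$ (since $P_\ell$ and $Q_\ell$ agree on all separations of order below $|s_\ell|$). Then $P_\ell,Q_\ell,P_{\ell+1}$ and $Q_\ell,P_{\ell+1},Q_{\ell+1}$ witness strong $\cP$-relevance of $\vs_\ell<\vr$ and $\vr<\vs_{\ell+1}$ respectively. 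So at most one insertion per gap suffices, and no iteration is needed. The missing idea in your proposal is precisely this specific choice of the middle pre-tangle and the use of the strict order increase $|s_\ell|<|s_{\ell+1}|$ to bound $|r|$.
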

\begin{proof}
    We recursively find a strictly increasing function~$f\colon \N \to \N$ and a strictly increasing sequence~$(\vs_i')_{i \in \N}$ in~$\vN$ which is strongly~$\cP$-relevant such that for every~$k \in \N$, we have
    \begin{enumerate}
        \item \label{prop:siissubsequencestrong}~$\vs_{f(k)}' = \vs_k$, and
        \item \label{prop:stronglyOrelevant} the pair~$\vs_i' < \vs_{i+1}'$ is strongly~$\cP$-relevant for all~$i < f(k)$.
    \end{enumerate}
    
    \noindent Clearly, such a sequence~$(\vs_i')_{i \in \N}$ is as desired.
    
    To start the recursion, we set~$f(0) := 0$ and~$\vs_0' := \vs_0$.
    Let~$\ell \in \N$ such that we have already defined~$f(0), \dots, f(\ell)$ and~$(\vs_i')_{i \le f(\ell)}$ which satisfy properties \cref{prop:siissubsequencestrong} and~\cref{prop:stronglyOrelevant} for every~$k \le \ell$.
    We now proceed by defining~$f(\ell+1) > f(\ell)$ and~$\vs_i'$ for the remaining~$i = f(\ell) + 1, \dots, f(\ell+1)$ such that~$(\vs_i')_{i \le f(\ell+1)}$ satisfies the properties \cref{prop:siissubsequencestrong} and~\cref{prop:stronglyOrelevant} for every~$k \le \ell+1$.

    The recursion step roughly works as follows.
    Appending the separation~$\vs_{\ell+1}$ to~$(\vs_i')_{i \le f(\ell)}$ would immediately yield~\cref{prop:siissubsequencestrong}.
    Then either the pair~$\vs_\ell < \vs_{\ell+1}$ is strongly~$\cP$-relevant itself, which implies~\cref{prop:stronglyOrelevant}, or we will insert an additional separation~$\vr$ in between~$\vs_\ell$ and~$\vs_{\ell+1}$ to then get two strongly~$\cP$-relevant pairs~$\vs_\ell < \vr$ and~$\vr < \vs_{\ell+1}$ and hence~\cref{prop:stronglyOrelevant}, which will then conclude the proof.

    Since both~$\vs_\ell$ and~$\vs_{\ell+1}$ are~$\cP$-relevant, there exist pre-tangles~$P_j$ and~$Q_j$ in~$\cP$ for~$j = \ell, \ell+1$ such that~$s_j$ efficiently distinguishes~$P_j$ and~$Q_j$ with~$\vs_j \in Q_j$.
    Since we have~$|s_\ell| < |s_{\ell+1}|$ by assumption, the pre-tangles~$P_{\ell+1}$ and~$Q_{\ell+1}$ in~$G$ orient not only~$s_{\ell+1}$, but also~$s_\ell$, and they do so in the same way as~$s_{\ell+1}$ efficiently distinguishes them.
    We have~$\vs_\ell \in Q_{\ell+1}$, as~$Q_{\ell+1}$ is consistent and~$\vs_\ell < \vs_{\ell+1} \in Q_{\ell+1}$, and hence~$\vs_\ell \in P_{\ell+1}$.
    Altogether, $s_\ell$ distinguishes~$P_\ell$ and~$P_{\ell+1}$.
    
    First, let us assume that~$s_\ell$ distinguishes~$P_\ell$ and~$P_{\ell+1}$ efficiently.
    Then the pre-tangles~$P_\ell, P_{\ell+1}, Q_{\ell+1} \in \cP$ witness the strong~$\cP$-relevance of the pair~$\vs_\ell < \vs_{\ell+1}$.
    So in this case, we set~$f(\ell+1) := f(\ell)+1$ and~$\vs'_{f(\ell+1)} := \vs_{\ell+1}$.
    Then~$(\vs_i')_{i \le f(\ell+1)}$ satisfies \cref{prop:siissubsequencestrong} and~\cref{prop:stronglyOrelevant} for every~$k \le \ell+1$, completing the recursion step. \looseness=-1
    
    Secondly, we assume that~$s_\ell$ does not distinguish~$P_\ell$ and~$P_{\ell+1}$ efficiently.
    Since~$N$ efficiently distinguishes~$\cP$, there exists a separation~$r \in N$ which distinguishes~$P_\ell$ and~$P_{\ell+1}$ efficiently; in particular, $|r| < |s_\ell|$.
    Let~$\vr$ be the orientation of~$r$ in~$P_{\ell+1}$ and~$\rv$ the one in~$P_\ell$.
    In the remainder of the proof, we show that setting~$f(\ell+1) := f(\ell) + 2$, $\vs'_{f(\ell)+1} := \vr$ and~$\vs'_{f(\ell)+2} := \vs_{\ell+1}$ yields the desired~$(\vs_i')_{i \le f(\ell+1)}$ satisfying~\cref{prop:siissubsequencestrong} and~\cref{prop:stronglyOrelevant} for all~$k \le \ell + 1$.
    To do so, it remains to show that~$\vs_\ell < \vr < \vs_{\ell+1}$ and that both pairs~$\vs_\ell < \vr$ and~$\vr < \vs_{\ell+1}$ are strongly~$\cP$-relevant.
    
    Let us begin with~$\vs_\ell < \vr$.
    Recall that both~$r, s_\ell \in N$ distinguish~$P_\ell$ and~$P_{\ell+1}$ with~$\vr, \vs_\ell \in P_{\ell+1}$.
    So by~\cref{obs:DistinguisherFormChain}, we have either~$\vr < \vs_\ell$ or~$\vs_\ell < \vr$; we show that the former cannot occur.
    The pre-tangle~$Q_\ell$ in~$G$ orients~$r$ as it orients~$s_\ell$ and~$|r| < |s_\ell|$.
    So if~$\vr < \vs_\ell$, then we have~$\vr \in Q_\ell$ since~$\vs_\ell \in Q_\ell$ and~$Q_\ell$ is consistent.
    But~$\rv \in P_\ell$, so~$r$ distinguishes~$P_\ell$ and~$Q_\ell$ and~$|r| < |s_\ell|$ which contradicts that~$s_\ell$ efficiently distinguishes~$P_{\ell}$ and~$Q_\ell$ by their choice.
    Thus, we must have~$\vs_\ell < \vr$, as desired.

    Next, we claim that~$\vr < \vs_{\ell+1}$.
    Recall that~$r$ and~$s_{\ell}$ are in~$N$ and hence nested.
    By the definition of nested, it suffices to show that neither of~$\rv < \vs_{\ell+1}$, $\vs_{\ell+1} < \vr$ and~$\vs_{\ell+1} < \rv$ holds; we address all these cases separately. \looseness=-1

    We cannot have
    $\vs_{\ell+1} < \vr$, 
    as we would otherwise have a contradiction to the consistency of the pre-tangle~$P_{\ell+1}$ in~$G$ which by choice already contains both~$\vr$ and
    $\sv_{\ell+1}$.
    For
    $\rv < \vs_{\ell+1}$, 
    observe that~$r$ cannot distinguish~$P_{\ell+1}$ and~$Q_{\ell+1}$ as they are efficiently distinguished by~$s_{\ell+1}$ and~$|r| < |s_{\ell+1}|$.
    Thus, $\vr \in P_{\ell+1}$ 
    yields~$\vr \in Q_{\ell+1}$.
    But by the choice of $Q_{\ell+1}$ it also contains~$\vs_{\ell+1}$  
    which together with $\rv < \vs_{\ell+1}$ then contradicts the consistency of~$Q_{\ell+1}$.
    Finally, if~$\vs_{\ell+1} < \rv$, then this implies~$\vs_{\ell} < \rv$ since~$\vs_\ell < \vs_{\ell+1}$.
    But we also have~$\vs_{\ell} < \vr$ as shown above, a contradiction to~$s_{\ell}$ being proper.
    
    It remains to show that both pairs~$\vs_\ell < \vr$ and~$\vr < \vs_{\ell+1}$ are strongly~$\cP$-relevant.
    More precisely, we claim that this is witnessed by the pre-tangles~$P_\ell, Q_\ell, P_{\ell+1} \in \cP$ and the pre-tangles~$Q_\ell, P_{\ell+1}, Q_{\ell+1} \in \cP$, respectively.
    To show this, it suffices to prove that~$r$ efficiently distinguishes~$Q_\ell$ and~$P_{\ell+1}$ with~$\rv \in Q_\ell$ and~$\vr \in P_{\ell+1}$.
    
    Since~$s_\ell$ efficiently distinguishes~$P_\ell$ and~$Q_\ell$ and~$|r| < s_{\ell}$, the pre-tangle~$Q_\ell$ in~$G$ orients~$r$ in the same way as~$P_\ell$.
    Hence, $\rv \in P_\ell$ yields~$\rv \in Q_\ell$.
    This implies that~$r$ distinguishes~$Q_\ell$ and~$P_{\ell+1}$ with~$\rv \in Q_\ell$ and~$\vr \in P_{\ell+1}$, and we claim that~$r$ does so efficiently.
    Indeed, the pre-tangles~$Q_\ell$ and~$P_\ell$ in~$G$ orient every separation of order less than~$|s_\ell|$ in the same way, since~$s_\ell$ efficiently distinguishes them.
    Hence, every separation of order less than~$|s_\ell|$ that distinguishes~$Q_\ell$ and~$P_{\ell+1}$ also distinguishes~$P_\ell$ and~$P_{\ell+1}$.
    But~$r$ efficiently distinguishes~$P_\ell$ and~$P_{\ell+1}$ by the choice of~$r$ and~$|r| < |s_\ell|$, so there is no separation of order less than~$r$ distinguishing~$Q_\ell$ and~$P_{\ell+1}$, as desired.
\end{proof}

The fourth~\cref{lem:InterlacingThinOutForStrictlyIncreasing} shows that by transition to suitable subsequences, we can maintain both~\cref{item:Orientation} and~\cref{item:DistinguishingSequence} while passing from `the orders cofinitely exceed every integer' to `strictly increasing orders'.

\begin{lemma} \label{lem:InterlacingThinOutForStrictlyIncreasing}
    Let~$(\vs_i)_{i \in \N}$ be a strictly increasing sequence of separations of a graph~$G$, and let~$(P_i)_{i \in \N}$ be a sequence of pre-tangles in~$G$ such that~\cref{item:Orientation} and~\cref{item:DistinguishingSequence} hold.
    If~$(|s_i|)_{i \in \N}$ cofinitely exceeds every integer, then there exists a subsequence~$(\vs_{j(i)})_{i \in \N}$ of strictly increasing orders such that it still satisfies~\cref{item:Orientation} and~\cref{item:DistinguishingSequence} together with~$(P_{j(i)})_{i \in \N}$.
\end{lemma}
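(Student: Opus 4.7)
The plan is to define the subsequence $(j(i))_{i \in \N}$ recursively so that the orders $|s_{j(i)}|$ are not only strictly increasing, but so that each $|s_{j(i)}|$ is also the minimum of the orders $|s_m|$ as $m$ ranges over the block $\{j(i), j(i)+1, \ldots, j(i+1)-1\}$. Once this stronger minimality holds,~\cref{item:Orientation} and~\cref{item:DistinguishingSequence} for the subsequence will follow quickly from the analogous properties of the original sequence.

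For the construction, I use that since the orders $(|s_i|)_{i \in \N}$ cofinitely exceed every integer, the function $f(m) := \min\{|s_{m'}| : m' \geq m\}$ is well-defined on $\N$, non-decreasing, and tends to infinity. Set $j(0)$ to be the largest $m$ with $f(m) = f(0)$; this maximum exists since $f \to \infty$, and one checks that $|s_{j(0)}| = f(0)$ while $|s_m| > |s_{j(0)}|$ for every $m > j(0)$. Inductively, given $j(i)$ with $|s_m| > |s_{j(i)}|$ for all $m > j(i)$, set $j(i+1)$ to be the largest $m > j(i)$ with $f(m) = f(j(i)+1)$; the inductive hypothesis yields $f(j(i)+1) > |s_{j(i)}|$, so $|s_{j(i+1)}| = f(j(i)+1) > |s_{j(i)}|$, and the same argument as at the base case gives $|s_m| > |s_{j(i+1)}|$ for all $m > j(i+1)$. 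The orders of the subsequence are thus strictly increasing. For block-minimality, observe that for any $m$ with $j(i) < m < j(i+1)$, one has $|s_m| \geq f(m) \geq f(j(i)+1) = |s_{j(i+1)}| > |s_{j(i)}|$, as required.

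Next I will verify~\cref{item:DistinguishingSequence} for the subsequence. For indices $i < k$ in the subsequence, the strict monotonicity of the subsequence orders together with the block-minimality from the previous paragraph imply that the minimum of $|s_m|$ over $m \in \{j(i), j(i)+1, \ldots, j(k)-1\}$ is attained uniquely at $m = j(i)$. Applying~\cref{item:DistinguishingSequence} for the original sequence to the original indices $j(i) < j(k)$ therefore shows that $s_{j(i)}$ efficiently distinguishes $P_{j(i)}$ and $P_{j(k)}$. Since the orders in the subsequence are strictly increasing, $s_{j(i)}$ is also the unique minimal-order element of $\{s_{j(i)}, \ldots, s_{j(k-1)}\}$ in the subsequence, so~\cref{item:DistinguishingSequence} for the subsequence is established.

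Finally, I will verify~\cref{item:Orientation} for the subsequence. The first half, $\sv_{j(i)} \in P_{j(i)}$, is immediate from~\cref{item:Orientation} for the original sequence at index $j(i)$. For the second half, $\vs_{j(i)} \in P_{j(i+1)}$, the previous paragraph applied to $k = i+1$ shows that $s_{j(i)}$ efficiently distinguishes $P_{j(i)}$ and $P_{j(i+1)}$, so these two pre-tangles orient $s_{j(i)}$ oppositely; combined with $\sv_{j(i)} \in P_{j(i)}$, this yields $\vs_{j(i)} \in P_{j(i+1)}$. The main obstacle throughout is to engineer the subsequence so that the minimum order in each block $\{j(i), \ldots, j(i+1)-1\}$ is attained at the left endpoint $j(i)$ itself; without this, one cannot directly invoke the original~\cref{item:DistinguishingSequence} to transfer efficient distinguishing to the subsequence. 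The right-minimum construction via $f(m)$ is precisely what ensures this.
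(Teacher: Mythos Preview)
Your proof is correct and follows essentially the same approach as the paper: the subsequence you build via the tail-minimum function~$f$ coincides with the paper's choice of~$j(i)$ as the last index realising the running minimum~$K_i$, and the key property you isolate---that $|s_m|>|s_{j(i)}|$ for all $m>j(i)$---is exactly what the paper uses. The only cosmetic differences are that the paper checks~\cref{item:Orientation} first via consistency of $P_{j(i+1)}$ (using $\vs_{j(i)}\le \vs_{j(i+1)-1}\in P_{j(i+1)}$) and then reduces~\cref{item:DistinguishingSequence} to~\cref{item:DistinguishingPairwise} via~\cref{lem:InterlacingDistinguishingSequence}, whereas you verify~\cref{item:DistinguishingSequence} directly for all $i<k$ and then read off~\cref{item:Orientation} from the case $k=i+1$.
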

\begin{proof}
    We inductively define~$j(i)$ for all~$i \in \N$.
    To do so, note that since~$(|s_\ell|)_{\ell \in \N}$ cofinitely exceeds every integer, there exist only finitely many~$s_\ell$ of some fixed order.
    For~$i = 0$ let~$K_0$ be the minimal order of any~$s_\ell$, and set~$j(0)$ to be the maximal~$\ell \in \N$ with~$|s_\ell| = K_0$; this maximum exists since there are only finitely many such~$\ell$, as noted above.
    For~$i \ge 1$ we analogously let~$K_i$ be the minimal order of any~$s_\ell$ with~$\ell > j(i-1)$ and set~$j(i)$ to be the maximal~$\ell \in \N$ with~$|s_\ell| = K_i$.
    With this construction we ensure that $|s_\ell| > |s_{j(i)}|$ for all~$\ell > j(i)$.
    
    First, we verify~\cref{item:Orientation} for~$(\vs_{j(i)})_{i \in \N}$ and~$(P_{j(i)})_{i \in \N}$.
    The original sequences satisfy~\cref{item:Orientation} which immediately yields~$\sv_{j(i)} \in P_{j(i)}$ and~$\vs_{j(i+1)-1} \in P_{j(i+1)}$.
    By construction, $|s_{j(i)}| \le |s_{j(i+1)-1}|$, so the pre-tangle~$P_{j(i+1)}$ in~$G$ orients~$s_{j(i)}$.
    Since~$\vs_{j(i)} \le \vs_{j(i+1)-1} \in P_{j(i+1)}$, the consistency of~$P_{j(i+1)}$ yields $\vs_{j(i)} \in P_{j(i+1)}$.
    This completes the verification of~\cref{item:Orientation}. \looseness=-1
    
    Secondly, we prove~\cref{item:DistinguishingSequence}, and we do so by checking~\cref{item:DistinguishingPairwise} which suffices by~\cref{lem:InterlacingDistinguishingSequence}.
    Applying~\cref{item:DistinguishingSequence} to~$j(i) < j(i+1)$ in the original sequence yields that every minimal-order separation \linebreak among~$\{s_{j(i)}, \dots, s_{j(i+1)-1}\}$ efficiently distinguishes~$P_{j(i)}$ and~$P_{j(i+1)}$.
    But by the definition of~$j(i)$, $s_{j(i)}$ is the unique such separation.
    Hence, \cref{item:DistinguishingPairwise} holds.
\end{proof}

With~\cref{lem:InterlacingDistinguishingSequence,lem:InterlacingStronglyRelevant,lem:InterlacingStrictlyIncreasingOrder,lem:InterlacingThinOutForStrictlyIncreasing} at hand, we are now ready to prove~\cref{lem:ProfileInterlacing}.

\begin{proof}[Proof of~\cref{lem:ProfileInterlacing}]
    Since~$((A_i, B_i))_{i \in \N}$ is non-exhaustive, \cref{lem:InterlacingLemma} and~\cref{lem:BoundedOrder} together imply that~$(|A_i, B_i|)_{i \in \N}$ cofinitely exceeds every integer.
    In particular, $((A_i, B_i))_{i \in \N}$ contains a subsequence whose orders are even strictly increasing.
    Note that if a sequence of separations is interlaced with a subsequence of~$((A_i, B_i))_{i \in \N}$, then it is also interlaced with~$((A_i, B_i))_{i \in \N}$ itself.
    So by transition to a subsequence, we may assume without loss of generality that~$(|A_i, B_i|)_{i \in \N}$ is strictly increasing.
    
    Thus, we can apply~\cref{lem:InterlacingStrictlyIncreasingOrder} to~$((A_i, B_i))_{i \in \N}$ and obtain a strictly increasing sequence~$((A_i', B_i'))_{i \in \N}$ in~$\vN$ which is strongly~$\cP$-relevant and contains~$((A_i, B_i))_{i \in \N}$ as a subsequence; in particular, $((A_i, B_i))_{i \in \N}$ and~$((A_i', B_i'))_{i \in \N}$ are interlaced.
    \cref{lem:InterlacingStronglyRelevant} then yields a corresponding sequence~$(P_i)_{i \in \N}$ in~$\cP$ such that~\labelcref{item:Orientation,item:DistinguishingPairwise} hold.
    \cref{lem:InterlacingDistinguishingSequence} finally implies that~\cref{item:DistinguishingSequence} holds as well.

    The `moreover'-part can be obtained by applying~\cref{lem:InterlacingThinOutForStrictlyIncreasing} to~$((A'_i, B'_i))_{i \in \N}$ and~$(P_i)_{i \in \N}$.
    In order to apply~\cref{lem:InterlacingThinOutForStrictlyIncreasing}, it remains to check that~$(|A_i', B_i'|)_{i \in \N}$ cofinitely exceeds every integer.
    By~\cref{lem:InterlacingLemma}, both~$((A_i, B_i))_{i \in \N}$ and~$((A_i', B_i'))_{i \in \N}$ have the same limit; in particular, $((A_i', B_i'))_{i \in \N}$ is non-exhaustive as~$((A_i, B_i))_{i \in \N}$ is.
    Then~\cref{lem:InterlacingLemma} and~\cref{lem:BoundedOrder} imply that~$(|A_i', B_i'|)_{i \in \N}$ cofinitely exceed every integer, as desired.
\end{proof}

We remark that~\cref{lem:ProfileInterlacing} and its proof are purely about sequences of separations and do not refer to the underlying graph in any way.
Thus, \cref{lem:ProfileInterlacing} can also be transferred to the more general framework of abstract separation systems which are extensively described in~\cite{ASS}.
There, we would then consider a universe of separations equipped with a (not necessarily submodular) order function and pre-tangles in this universe which are efficiently distinguished by a regular nested set of separations.

\section{Proofs of Theorem 1 and 2} \label{sec:ProofMainTheorem}

In this section, we prove our main results, \cref{main:Theorem} and its structural version~\cref{main:Technical}, from which we will deduce~\cref{main:Theorem}.
To prove~\cref{main:Technical}, we will apply the interlacing theorem, \cref{lem:ProfileInterlacing}, to analyse the limit separation of a non-exhaustive strictly increasing sequence of separations in a tree of tangles, and we then exhibit a thick end in the separator of the limit separation.

Let~$\omega$ be an end of a graph~$G$.
Recall that for every finite set~$X$ of vertices of~$G$, there exists a unique component of~$G-X$ which contains cofinitely many vertices of some (or equivalently every) ray in~$\omega$. 
Thus, the end~$\omega$ \emph{induces} a pre-tangle~$P_\omega$ in~$G$: it orients every finite-order separation of~$G$ to the side which contains cofinitely many vertices of some (or equivalently every) ray in~$\omega$~\cite{EndsAndTangles}.

Recall that an end~$\omega$ is \emph{thick} if it contains infinitely many disjoint rays, and if~$\omega$ is not thick, then it is \emph{thin}.
The next lemma shows that the pre-tangle~$P_\omega$ induced by a thin end~$\omega$ can be particularly well distinguished from all other pre-tangles in~$G$.

\begin{lemma} \label{lem:ThinEndSequence}
    Let~$\omega$ be a thin end of a graph~$G$, and let~$P_\omega$ be the pre-tangle in~$G$ that is induced by~$\omega$.
    Then there exists~$K \in \N$ such that every pre-tangle~$P$ in~$G$ with~$P \nsubseteq P_\omega$ is distinguished from~$P_\omega$ by a~separation of order at most~$K$.
\end{lemma}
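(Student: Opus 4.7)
The plan is to take $K := \deg(\omega)$, the maximum number of pairwise disjoint rays in $\omega$, which is finite because $\omega$ is thin (by Halin's theorem). With this choice I would then show that every pre-tangle $P$ in $G$ with $P \nsubseteq P_\omega$ is already distinguished from $P_\omega$ by a separation of order at most $K$.

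My key auxiliary ingredient is a Menger-type fact for ends: for every finite $Y \subseteq V(G)$ there exists a finite $X \supseteq Y$ with $|N_G(C_\omega(X))| \le K$, where $C_\omega(X)$ denotes the unique component of $G - X$ containing a tail of every ray in $\omega$. Indeed, the maximum number of disjoint $Y$-to-$\omega$ rays is bounded by $\deg(\omega) = K$, so Menger's theorem for ends supplies a finite set $Z$ disjoint from $Y$ with $|Z| \le K$ that separates $Y$ from the tails of the rays in $\omega$, and then $X := Y \cup Z$ satisfies $|N_G(C_\omega(X))| \le |Z| \le K$.

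Now given any pre-tangle $P$ with $P \nsubseteq P_\omega$, I would pick $\vs = (A, B) \in P \setminus P_\omega$; then $\sv \in P_\omega$, so $A$ is the $\omega$-side of $s$. If $|s| \le K$ then $s$ itself already distinguishes $P$ and $P_\omega$ as required, so assume $|s| > K$. Applying the auxiliary fact to $Y := A \cap B$ produces a finite $X \supseteq A \cap B$ with $|N_G(C_\omega(X))| \le K$; I set $C := C_\omega(X)$, $T := N_G(C)$, and consider the separation $t := \{V(G) \setminus C, C \cup T\}$ of order at most $K$, which $P_\omega$ orients as $\vt := (V(G) \setminus C, C \cup T)$.

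The geometric crux is to show $C \subseteq A \setminus B$: since $C$ contains a tail of every ray of $\omega$ and $A$ is the $\omega$-side while the separator $A \cap B$ is finite, $C$ contains some vertex $v \in A \setminus B$; for any $u \in C$, the $u$--$v$ path inside the component $C$ of $G - X$ avoids $A \cap B \subseteq X$ and therefore cannot cross from $A \setminus B$ to $B \setminus A$, forcing $u \in A \setminus B$. This in turn yields $T \subseteq A$, as any $T$-vertex in $B \setminus A$ would have a $C$-neighbour in $A \setminus B$, contradicting that $\{A, B\}$ is a separation. Hence $C \cup T \subseteq A$ and $V(G) \setminus C \supseteq B$, giving $\tv \le \vs$; since $|t| \le K < |s|$ rules out $\{s\} = \{t\}$, the consistency of $P$ then forbids $\vt \in P$, whence $\tv \in P$ and $t$ distinguishes $P$ from $P_\omega$ with order at most $K$, as desired. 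The main obstacle is the Menger-type statement for ends in the second paragraph; once that is in hand, the rest is a short consistency computation.
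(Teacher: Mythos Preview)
Your argument is correct. The consistency computation in the last paragraph is clean, and the geometric step $C \subseteq A \setminus B$ (hence $T \subseteq A$, hence $\tv \le \vs$) is exactly what is needed; once $\tv \le \vs$ with $|t| < |s|$, consistency of $P$ forces $\tv \in P$, and you are done.

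Your route differs from the paper's. The paper does not invoke the end degree or Menger directly. Instead it quotes an external result (from Gollin--Kneip) that a thin end admits an \emph{exhaustive} strictly increasing sequence $((A_i,B_i))_{i\in\N}$ of separations of order at most some $K$ with $(A_i,B_i)\in P_\omega$ and each $G[B_i]$ connected. Given $(C,D)\in P_\omega$, $(D,C)\in P$ of large order, the paper then applies its own \cref{lem:ExhaustiveFiniteOrderBelowLimit} to find $I$ with $(C,D) < (A_I,B_I)$, whence consistency of $P$ gives $(B_I,A_I)\in P$. So the paper trades your single Menger-constructed separation for a whole defining sequence plus a lemma it has already proved about exhaustive sequences; this lets it reuse internal machinery, while your argument is more self-contained but imports a different black box. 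One caveat worth noting: your Menger step implicitly assumes $\omega$ has no (or at most $K$) dominating vertices, which is automatic in the locally finite setting where the lemma is actually applied; the paper's quoted sequence result encodes the same assumption, so neither proof is really more general on this point.
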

\begin{proof}
    Since~$\omega$ is thin, there exists~$K \in \N$ and an exhaustive strictly increasing sequence~$((A_i, B_i))_{i \in \N}$ of separations of order at most~$K$ such that~$(A_i, B_i) \in P_\omega$ and~$G[B_i]$ is connected for all~$i \in \N$~\cite{gollin2022characterising}*{Corollary~5.5}.
    Let~$P$ be a pre-tangle in~$G$ with~$P \nsubseteq P_\omega$. 
    Since~$P_\omega$ orients all finite-order separations of~$G$, there thus is some finite-order separation~$\{C, D\}$ of~$G$ distinguishing~$P$ and~$P_\omega$ in that~$(C, D) \in P_\omega$ and~$(D, C) \in P$.
    If~$\{C, D\}$ has order at most~$K$, then it is as desired; so suppose that~$\{C, D\}$ has order greater than~$K$. \looseness=-1

    Since the sequence~$((A_i, B_i))_{i \in \N}$ is exhaustive and all the~$G[B_i]$ are connected, \cref{lem:ExhaustiveFiniteOrderBelowLimit} yields $I \in \N$ such that either~$(C, D) < (A_I, B_I)$ or~$(D, C) < (A_I, B_I)$.
    We claim that~$\{A_I, B_I\}$ is as desired; since~$\{A_I, B_I\}$ has order at most~$K$, it remains to show that~$\{A_I, B_I\}$ distinguishes~$P$ and~$P_\omega$. 
    Since~$P_\omega$ is consistent and both~$(C, D)$ and~$(A_I, B_I)$ are contained in~$P_\omega$, we must have~$(C, D) < (A_I, B_I)$.
    The separation~$\{A_I, B_I\}$ is also oriented by the pre-tangle~$P$ in~$G$ since it has order at most~$K$ and hence order less than~$(D, C) \in P$.
    The consistency of~$P$ then implies~$(B_I, A_I) \in P$ since~$(B_I, A_I) < (D, C) \in P$.
    Thus, $\{A_I, B_I\}$ distinguishes~$P$ and~$P_\omega$, as desired. \looseness=-1
\end{proof}

A \emph{comb} is a union of a ray~$R$ with infinitely many disjoint finite paths which have precisely their first vertex on~$R$; we call the last vertices of these paths the \emph{teeth} of the comb and refer to~$R$ as its \emph{spine}.
The star-comb lemma~\cite{DiestelBook16}*{Proposition 8.2.2} implies the following simple observation about combs and infinite sets of vertices in locally finite graphs.

\begin{lemma} \label{lem:StarComb}
    If~$U$ is an infinite set of vertices of a locally finite graph~$G$, then~$G$ contains a comb with teeth in~$U$ as a subgraph.
    \qed
\end{lemma}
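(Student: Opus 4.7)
My plan is to apply Diestel's star-comb lemma, \cite{DiestelBook16}*{Proposition 8.2.2}, which asserts that in any connected graph an infinite set $U$ of vertices produces, as a subgraph, either a comb with all teeth in $U$ or a subdivision of an infinite star with all leaves in $U$. The first alternative is precisely the conclusion of \cref{lem:StarComb}, so the task reduces to excluding the second alternative via the local finiteness of $G$.

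To rule out the star alternative, I would observe that in any subdivision $H$ of an infinite star with centre $c$, each edge of the underlying star incident to $c$ is replaced by a path from $c$ to a distinct leaf, and these paths are internally disjoint. The first edges of these paths are therefore pairwise distinct and all incident to $c$, so $c$ has infinitely many distinct neighbours in $H$, hence in $G$. This contradicts the local finiteness of $G$, so the star-comb lemma must produce the desired comb.

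No step here presents a genuine obstacle; the argument is essentially a one-line deduction from the star-comb lemma. The only point of care is that Diestel's lemma is stated for connected graphs, which is consistent with the paper's standing setting where \cref{lem:StarComb} is to be invoked for a connected locally finite $G$.
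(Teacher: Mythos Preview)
Your proposal is correct and matches the paper's approach exactly: the paper simply cites the star-comb lemma \cite{DiestelBook16}*{Proposition~8.2.2} and places a \qedsymbol, leaving the exclusion of the star alternative via local finiteness implicit. Your remark about connectedness is apt; the lemma is only ever applied in the paper to connected locally finite graphs, so this is not an issue in context.
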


Consequently, we say that an end~$\omega$ of the locally finite~$G$ is 
in the \emph{closure} of a set~$U \subseteq V(G)$ if there exists a comb with teeth in~$U$ whose spine is in~$\omega$.\footnote{An end of a locally finite graph $G$ is 
in the closure of
a set~$U$ if and only if it is contained in its topological closure~$\bar{U}$ in the Freudenthal compactification~$|G|$ of $G$ (cf.~\cite{DiestelBook16}*{Section~8.6}).}
It is immediate from this definition that if~$\omega$ is 
in the closure of~$U$ and~$(A, B)$ is a separation in the pre-tangle~$P_\omega$ in~$G$ induced by~$\omega$, then infinitely many vertices of~$U$ are contained in~$B$.
Similarly, if~$\{A, B\}$ is a finite-order separation of~$G$ and the end~$\omega$ is 
in the closure of~$B$, then~$(A, B) \in P_\omega$.

Recall that if a strictly increasing sequence of tight separations of a connected locally finite graph is non-exhaustive, then its limit separation has infinite order by~\cref{lem:LimitSepHasInfiniteOrder}.
Then \cref{lem:StarComb} implies that some end of the graph is 
in the closure of the separator of such a limit separation.
The next lemma shows that this end is unique if the sequence is contained in a nested set distinguishing all the pre-tangles induced by ends in the closure of the separator.

\begin{lemma} \label{lem:UniqueEnd}
    Let~$N$ be a nested set of separations of a connected locally finite graph~$G$, let~$((A_i, B_i))_{i \in \N}$ be a strictly increasing sequence of tight separations in~$\vN$, and let~$(A, B)$ be its limit.
    If~$((A_i, B_i))_{i \in \N}$ is non-exhaustive and~$N$ distinguishes all pre-tangles in~$G$ that are induced by ends 
    in the closure of~$A \cap B$, then precisely one end of~$G$ is 
    in the closure of~$A \cap B$.
\end{lemma}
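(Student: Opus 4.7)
My plan is as follows. For existence, Lemma~\ref{lem:LimitSepHasInfiniteOrder} directly shows that $A \cap B$ is infinite, and the star-comb lemma (Lemma~\ref{lem:StarComb}) then yields a comb with teeth in $A \cap B$ whose spine represents an end of $G$ lying, by definition, in the closure of $A \cap B$.

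For uniqueness, I would argue by contradiction, supposing that two distinct ends $\omega, \omega'$ both lie in the closure of $A \cap B$. Since any two distinct ends of a locally finite graph are separated by a finite vertex set, the induced pre-tangles $P_\omega$ and $P_{\omega'}$ are distinct and are distinguished by some finite-order separation of $G$. The hypothesis on $N$ then provides a separation $\{C, D\} \in N$ distinguishing them, say with $(C, D) \in P_\omega$ and $(D, C) \in P_{\omega'}$. Applying the remark preceding this lemma once to $\omega$ and $(C, D)$ and once to $\omega'$ and $(D, C)$ shows that both $(A \cap B) \cap D$ and $(A \cap B) \cap C$ are infinite; since $C \cap D$ is finite, infinitely many vertices of $A \cap B$ in fact lie in each of $D \setminus C$ and $C \setminus D$.

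The main step is then a case analysis using nestedness. As $\{C, D\} \in N$ is nested with every $\{A_i, B_i\}$, the contrapositive of Lemma~\ref{lem:FiniteOrderCrossLimit} gives that $\{C, D\}$ is nested with $\{A, B\}$ itself, leaving four comparabilities. The cases $(A, B) \leq (C, D)$ and $(A, B) \leq (D, C)$ immediately force $A \cap B \subseteq C$ and $A \cap B \subseteq D$ respectively, contradicting the infinitude of $A \cap B$ in $D \setminus C$, respectively $C \setminus D$. The remaining, symmetric cases $(C, D) \leq (A, B)$ and $(D, C) \leq (A, B)$ are the subtler ones and constitute the main obstacle: here I would invoke Lemma~\ref{lem:FiniteOrderBelowLimit} (whose hypotheses are met because $((A_i, B_i))_{i \in \N}$ is a non-exhaustive strictly increasing sequence of tight separations in the connected locally finite graph $G$) to produce an index $I$ with $(C, D) \leq (A_I, B_I)$, respectively $(D, C) \leq (A_I, B_I)$. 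Combined with $B \subseteq B_I$ and with the comparability $C \subseteq A$ (respectively $D \subseteq A$), this rewrites $(A \cap B) \cap C$ as $B \cap C$ (respectively $(A \cap B) \cap D$ as $B \cap D$) and confines it inside the finite separator $A_I \cap B_I$, yielding the desired contradiction with the infinitude established above.
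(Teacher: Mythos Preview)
Your proof is correct, but it takes a longer route than the paper's. Both proofs establish that $(A\cap B)\cap(C\setminus D)$ and $(A\cap B)\cap(D\setminus C)$ are infinite, and both use \cref{lem:FiniteOrderCrossLimit}. The difference lies in what happens next.

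The paper observes that a single vertex in $(A\cap B)\cap(C\setminus D)$ already lies in both corners $A\cap C$ and $B\cap C$ (and outside $S=(A\cap B)\cap(C\cap D)$), and symmetrically for $(A\cap B)\cap(D\setminus C)$; hence all four corners are non-empty after removing $S$, and \cref{lem:NestedWithCorners} gives directly that $\{C,D\}$ crosses $\{A,B\}$. Then \cref{lem:FiniteOrderCrossLimit} (used in the forward direction) yields that $\{C,D\}$ crosses some $\{A_i,B_i\}$, contradicting the nestedness of $N$. No case analysis and no appeal to \cref{lem:FiniteOrderBelowLimit} are needed.

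You instead run \cref{lem:FiniteOrderCrossLimit} contrapositively to conclude that $\{C,D\}$ is nested with $\{A,B\}$, and then eliminate the four comparabilities one by one, invoking \cref{lem:FiniteOrderBelowLimit} for the two cases $(C,D)\le(A,B)$ and $(D,C)\le(A,B)$. This is sound, and your reduction of $(A\cap B)\cap C$ to a subset of the finite set $A_I\cap B_I$ is a nice trick; but it costs an extra lemma and a four-way case split that the paper avoids with the single observation about \cref{lem:NestedWithCorners}.
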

\begin{proof}
    Since~$((A_i, B_i))_{i \in \N}$ is non-exhaustive, $A \cap B$ is infinite by~\cref{lem:LimitSepHasInfiniteOrder}.
    So at least one end of~$G$ is 
    in the closure of~$A \cap B$ by~\cref{lem:StarComb}.
    Suppose for a contradiction that two distinct ends~$\omega$ and~$\omega'$ of~$G$ are 
    in the closure of~$A \cap B$, and let~$P_\omega$ and~$P_{\omega'}$ be the pre-tangles in~$G$ that are induced by~$\omega$ and~$\omega'$, respectively.
    By assumption, there exists a finite-order separation~$\{C, D\} \in N$ which distinguishes~$P_\omega$ and~$P_{\omega'}$ with~$(D, C) \in P_\omega$ and~$(C, D) \in P_{\omega'}$.
    It suffices to show that~$\{C, D\}$ must cross~$\{A, B\}$.
    Indeed, \cref{lem:FiniteOrderCrossLimit} then implies that~$\{C, D\}$ crosses some~$\{A_i, B_i\}$ which contradicts the nestedness of~$N$ in which they are both contained.

    To show that~$\{C, D\}$ crosses~$\{A, B\}$, observe that there exist infinitely many elements of~$A \cap B$ in~$C$, since~$\omega$ is 
    in the closure of~$A \cap B$.
    This implies~$(C \setminus D) \cap (A \cap B) \neq \emptyset$ as~$C \cap D$ is finite.
    Arguing analogously for~$(C, D) \in P_{\omega'}$, we obtain~$(D \setminus C) \cap (A \cap B) \neq \emptyset$.
    But then~$\{A, B\}$ and~$\{C, D\}$ cross by~\cref{lem:NestedWithCorners}.
\end{proof}

We now use the interlacing theorem, \cref{lem:ProfileInterlacing}, to show~\cref{thm:ThickEnd}.
It asserts that given a non-exhaustive strictly increasing sequence in a nested set distinguishing all pre-tangles of a locally finite graph, the (by~\cref{lem:UniqueEnd} unique) end which is 
in the closure of the separator of its limit separation cannot be thin.

\begin{theorem} \label{thm:ThickEnd}
    Let~$\cP$ be a set of pre-tangles in a connected locally finite graph~$G$, and let~$N$ be a nested set of~$\cP$-relevant tight separations which efficiently distinguishes~$\cP$.
    Let~$((A_i, B_i))_{i \in \N}$ be a strictly increasing sequence in~$\vN$, and let~$(A, B)$ be its limit.

    If~$((A_i, B_i))_{i \in \N}$ is non-exhaustive
    and~$\cP$ contains all the pre-tangles in~$G$ which are induced by ends
    in the closure of~$A \cap B$, then there is precisely one end~$\omega$ in the closure of $A \cap B$ and it is thick.
\end{theorem}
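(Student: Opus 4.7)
The plan is to handle existence and uniqueness of $\omega$ first, and then to derive the thickness assertion by contradiction, via a comb argument supported by the interlacing theorem.

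For existence and uniqueness: first apply \cref{lem:LimitSepHasInfiniteOrder} to conclude that $A \cap B$ is infinite, then \cref{lem:StarComb} to produce at least one end in its closure, and finally \cref{lem:UniqueEnd} for uniqueness -- its distinguishing hypothesis follows because, by assumption, $\cP$ contains $P_\omega$ for every end in the closure of $A\cap B$ and $N$ efficiently (hence in particular) distinguishes $\cP$.

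For thickness, assume for contradiction that $\omega$ is thin. Apply \cref{lem:ProfileInterlacing} to the sequence $((A_i, B_i))_{i \in \N}$ to obtain an interlaced strictly increasing sequence $((A_i', B_i'))_{i \in \N}$ of oriented separations with all $\{A_i', B_i'\} \in N$, strictly increasing orders, and an accompanying sequence $(P_i)_{i \in \N}$ in $\cP$ satisfying \cref{item:Orientation,item:DistinguishingSequence}, together with the moreover clause that $\{A_i', B_i'\}$ efficiently distinguishes $P_i$ from every $P_j$ with $j > i$. By \cref{lem:InterlacingLemma} the new sequence has limit $(A, B)$. Since $\omega \in \overline{A \cap B} \subseteq \overline{B_i'}$, we have $(A_i', B_i') \in P_\omega$ for every $i$, while $(B_i', A_i') \in P_i$ by \cref{item:Orientation}; in particular $P_i \not\subseteq P_\omega$. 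Now \cref{lem:ThinEndSequence} supplies a constant $K$ such that each $P_i$ is distinguished from $P_\omega$ by a separation of order at most~$K$, and since $P_\omega \in \cP$ and $N$ efficiently distinguishes $\cP$, there is an efficient distinguisher $t_i \in N$ of $P_i$ from $P_\omega$ with $|t_i| \leq K$. For all sufficiently large $i$ this gives $|t_i| \leq K < |A_i', B_i'|$.

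The contradiction will come from a comb argument once a finite-order oriented separation $\vec t = (T, T') \in P_\omega$ with $(A, B) \leq \vec t$ has been produced. From $A \subseteq T$ and $B \supseteq T'$ we get $A \cap T' \subseteq T \cap T'$, which is finite, so cofinitely many of the infinitely many vertices of $A \cap B$ lie in $T \setminus T'$; on the other hand, $\vec t \in P_\omega$ places cofinitely many vertices of any ray in $\omega$ inside $T' \setminus T$. Applying \cref{lem:StarComb} to $A \cap B$ and using the uniqueness of $\omega$ yields a comb with teeth in $A \cap B$ and spine in $\omega$, so cofinitely many of its pairwise vertex-disjoint spine-to-tooth paths run from $T' \setminus T$ to $T \setminus T'$. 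By the separation property each such path enters $T \setminus T'$ from a vertex of $T \cap T'$, and vertex-disjointness forces these transition vertices to be pairwise distinct, contradicting $|T \cap T'| \leq |t|$.

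The principal task is therefore to extract such a $\vec t$ from the $t_i$. Nestedness of $t_i$ with $\{A_i', B_i'\}$ in $N$ combined with the consistency of $P_i$ and $P_\omega$ (which disagree on both separations) narrows the possibilities to $\vec{t_i} \leq (A_i', B_i')$ or $(A_i', B_i') \leq \vec{t_i}$. In the latter case, if $(A_j', B_j') \leq \vec{t_i}$ can be shown for all $j$, then taking the limit yields $(A, B) \leq \vec{t_i}$ and we are done. The main obstacle will be to rule out the alternative scenarios; here I expect to leverage the strengthened efficient-distinguishing property of the interlacing theorem (so that the $t_i$ do not "escape" past $\{A_i', B_i'\}$ without detection), \cref{obs:DistinguisherFormChain} (so that distinguishers of $P_\omega$ in $N$ line up into chains), and \cref{lem:BoundedOrder} (so that any strictly increasing chain of tight separations of order at most $K$ is exhaustive, which clashes with the persistent presence of the infinite $A \cap B$ in the $B_i'$-side). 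Executing this chain analysis to force the comparison $(A, B) \leq \vec t$ for some finite-order $\vec t \in P_\omega$ is the step I expect to be the hardest.
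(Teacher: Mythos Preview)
Your setup mirrors the paper's closely: you invoke \cref{lem:ProfileInterlacing} for the interlaced sequence with strictly increasing orders and the accompanying pre-tangles, you pin down the unique end via \cref{lem:UniqueEnd}, and you pull in \cref{lem:ThinEndSequence} for the bound~$K$. The divergence is in the endgame.

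The paper does \emph{not} try to produce a finite-order~$\vec t\in P_\omega$ with $(A,B)\le\vec t$; in fact its argument shows this never happens. Instead, it proves the stronger intermediate \emph{Claim} (independent of~$\omega$ being thin) that each $\{A_i',B_i'\}$ already efficiently distinguishes $P_i$ from~$P_\omega$. Granting the Claim, the contradiction with \cref{lem:ThinEndSequence} is a single line: some $|A_i',B_i'|>K$, yet it is an efficient $P_i$--$P_\omega$ distinguisher. The Claim is proved by taking a hypothetical cheaper distinguisher $(C,D)\in\vN\cap P_\omega$, using \cref{obs:DistinguisherFormChain} to get $(A_i',B_i')<(C,D)$ (your ``Case~A'' is ruled out via \cref{item:DistinguishingSequence}), and then---this is the tool you are missing---using \cref{lem:FiniteOrderCrossLimit} plus \cref{lem:FiniteOrderBelowLimit} to force $(C,D)\le(A,B)$ and hence $(C,D)<(A_I',B_I')$ for some~$I>i$, which contradicts the ``moreover'' clause of \cref{lem:ProfileInterlacing}.

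Your plan is not wrong, just circuitous: if you actually carry out the case analysis you sketch, the ``alternative scenarios'' you want to rule out already yield contradictions (precisely the ones the paper exploits), so your comb argument only ever fires in a vacuous branch. Concretely, once you know $\overleftarrow{t_i}\in P_j$ for all $j>i$ (from the moreover clause), \cref{obs:DistinguisherFormChain} makes $\vec t_i$ comparable with every $(A_j',B_j')$; if $\vec t_i\le(A_j',B_j')$ for \emph{some}~$j$, the moreover clause is violated and you are done---no need to push on to $(A,B)\le\vec t_i$. Your invocation of \cref{lem:BoundedOrder} is a red herring here; the decisive lemma from \S\ref{sec:LimitSeparations} is \cref{lem:FiniteOrderBelowLimit}, which collapses your residual case entirely and makes the comb argument unnecessary.
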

\begin{proof}
    The assumptions on~$N$ and~$((A_i, B_i))_{i \in \N}$ allow us to apply~\cref{lem:ProfileInterlacing} to obtain a strictly increasing sequence~$((A_i', B_i'))_{i \in \N}$ in~$\vN$ and a corresponding sequence~$(P_i)_{i \in \N}$ in~$\cP$ satisfying \cref{item:Orientation} and~\cref{item:DistinguishingSequence}.
    Furthermore, the sequence~$((A_i', B_i'))_{i \in \N}$ is interlaced with~$((A_i, B_i))_{i \in \N}$, which implies by \cref{lem:InterlacingLemma} that~$(A, B)$ is also the limit of~$((A_i', B_i'))_{i \in \N}$.
    In particular, this sequence is non-exhaustive.
    By the `moreover'-part of \cref{lem:ProfileInterlacing}, we may additionally assume that~$(|A_i', B_i'|)_{i \in \N}$ is strictly increasing and then~$(A_i', B_i')$ efficiently distinguishes~$P_i$ from every~$P_j$ with~$j > i$.

    By~\cref{lem:UniqueEnd}, there is a unique end $\omega$ in the closure of $A \cap B$.
    Let~$P_\omega$ be the pre-tangle in~$G$ which is induced by  
    $\omega$.
    The following claim shows that every separation in our new sequence~$((A_i', B_i'))_{i \in \N}$ efficiently distinguishes~$P_\omega$ from the respective~$P_i$.
    \begin{claim*}
        Every~$\{A_i', B_i'\}$ efficiently distinguishes~$P_i$ and~$P_\omega$.
    \end{claim*}
    \begin{claimproof}
        We first show that~$\{A_i', B_i'\}$ distinguishes~$P_i$ and~$P_\omega$.
        Indeed, $(B_i', A_i') \in P_i$ by~\cref{item:Orientation}.
        To see~$(A_i', B_i') \in P_\omega$, recall that~$(A_i', B_i') \le (A, B)$.
        Thus, $A \cap B$ is contained in~$B_i'$.
        Since~$\omega$ is 
        in the closure of~$A \cap B$, it is thus 
        in the closure of~$B_i'$ as well, and this implies~$(A_i', B_i') \in P_\omega$ by definition.

        Suppose that~$\{A_i', B_i'\}$ does not distinguish~$P_i$ and~$P_\omega$ efficiently.
        The nested set~$N$ efficiently distinguishes~$\cP$, so~$N$ contains a separation~$\{C, D\}$ which efficiently distinguishes~$P_i$ and~$P_\omega$ in that $(C, D) \in P_\omega$ and~$(D, C) \in P_i$.
        In particular, we have~$|C, D| < |A_i', B_i'|$. 
        We now claim that~$(A_i', B_i') < (C, D)$ and that there exists some integer~$I > i$ with~$(C, D) < (A_I', B_I')$.
        
        First, let us show that~$(A_i', B_i') < (C, D)$.
        The nested separations~$\{C, D\}$ and~$\{A_i', B_i'\}$ both distinguish~$P_i$ and~$P_\omega$ with~$(C, D), (A_i', B_i') \in P_\omega$ as above.
        Thus, \cref{obs:DistinguisherFormChain} yields that either~$(A_i', B_i') < (C, D)$ or~$(C, D) < (A_i', B_i')$.
        Suppose for a contradiction that~$(C, D) < (A_i', B_i')$.
        Recall that~$(A_i', B_i') \in P_{i+1}$ by~\cref{item:Orientation}.
        The consistency of the pre-tangle~$P_{i+1}$ in~$G$ thus yields~$(C, D) \in P_{i+1}$, since~$|C, D| < |A_i', B_i'|$.
        We also have~$(D, C) \in P_i$ by the choice of~$\{C, D\}$, so~$\{C, D\}$ distinguishes~$P_i$ and~$P_{i+1}$.
        But~$|C, D| < |A_i', B_i'|$ which contradicts \cref{item:DistinguishingSequence} for~$i < i+1$.
        Thus, we must have~$(A_i', B_i') < (C, D)$, as desired.

        Secondly, we find an integer~$I > i$ with~$(C, D) < (A_I', B_I')$.
        By~\cref{lem:FiniteOrderBelowLimit}, it suffices to show that~$(C, D) \le (A, B)$.
        Since~$\{C, D\}$ and all the~$\{A_j', B_j'\}$ are contained in the nested set~$N$, \cref{lem:FiniteOrderCrossLimit} implies that~$\{C, D\}$ has to be nested with~$\{A, B\}$ as well.
        By the choice of~$\{C, D\}$, we have $(C, D) \in P_\omega$, so~$D$ contains infinitely many elements of~$A \cap B$, and since~$\{C, D\}$ has finite order, we have~$(D \setminus C) \cap (A \cap B) \neq \emptyset$.
        Hence, either~$(C,D) \le (A,B)$ or~$(C,D) \le (B,A)$ holds by~\cref{lem:NestedWithCorners} because~$\{C, D\}$ and~$\{A, B\}$ are nested.
        Suppose for a contradiction that~$(C,D) \le (B,A)$.
        Then~$(A_i', B_i') < (C, D) \le (B, A)$, but we also have~$(A_i', B_i') \le (A,B)$ which contradicts the fact that~$\{A_i', B_i'\}$ is tight and hence proper.

        We complete the proof of the claim by obtaining a contradiction from~$(A_i', B_i') < (C, D) < (A_I', B_I')$.
        By~\cref{item:Orientation}, we have that~$(B_i', A_i') \in P_i$ and~$(A_I', B_I') \in P_{I+1}$.
        Since~$|C, D| < |A_i', B_i'| < |A_I', B_I'|$, both pre-tangles~$P_i$ and~$P_{I+1}$ in~$G$ orient~$\{C, D\}$.
        The consistency of the pre-tangles~$P_i$ and~$P_{I+1}$ together with $(A_i', B_i') < (C, D) < (A_I', B_I')$ then yields~$(D, C) \in P_i$ and~$(C, D) \in P_{I+1}$; in particular, $\{C, D\}$ distinguishes~$P_i$ and~$P_{I+1}$.
        But~$|C, D| < |A_i', B_i'|$, which contradicts that~$\{A_i', B_i'\}$ efficiently distinguishes~$P_i$ and~$P_{I+1}$ by the choice of the sequence~$((A_i',B_i'))_{i \in \N}$.
    \end{claimproof}
    
    Now suppose for a contradiction that~$\omega$ is thin.
    Then~\cref{lem:ThinEndSequence} yields some~$K \in \N$ such that~$P_\omega$ is distinguished from every other pre-tangle~$P \nsubseteq P_\omega$ in~$G$ by a separation of order at most~$K$.
    But the orders of the~$\{A_i', B_i'\}$ are strictly increasing, so there exists~$j \in \N$ such that~$(A_j', B_j')$ has order greater than~$K$.
    By the claim, this~$\{A_j', B_j'\}$ efficiently distinguishes~$P_j$ and~$P_\omega$, which then contradicts the choice of~$K$ and hence the thinness of~$\omega$.
    So~$\omega$ is a thick end of~$G$, as desired.
\end{proof}

\noindent Instead of finishing the proof of~\cref{thm:ThickEnd} by a contradiction in terms of cardinality via~\cref{lem:ThinEndSequence}, one can also perform a structural analysis to obtain that the thickness of~$\omega$ is even witnessed by an infinite family of disjoint rays in~$G[B \setminus A]$ (see~\cref{app:InfRaysBeyondLimit}).

It remains to deduce our main results, \cref{main:Theorem,main:Technical}, by applying~\cref{thm:ThickEnd} to a tree of tangles of the graph~$G$.
To do so, recall that a~\emph{tangle~$P$ in~$G$} is a pre-tangle in~$G$ such that for every three -- not necessarily distinct -- separations~$(A_1, B_1), (A_2, B_2), (A_3, B_3) \in P$, we have~$G[A_1] \cup G[A_2] \cup G[A_3] \neq G$.
We remark that every pre-tangle induced by some end is in fact a tangle.
Recall further that a \emph{tree of tangles} of~$G$ is a nested set of separations which efficiently distinguishes the set~$\cP$ of all the tangles in~$G$ and which contains only~$\cP$-relevant separations.

The next lemma asserts that every separation in a tree of tangles, and even every~$\cP$-relevant separation, is tight.

\begin{lemma}[\cite{InfiniteSplinters}*{Lemma~6.1 and its proof}] \label{lem:EfficientDistinguishersAreTight}
    Let~$\{A, B\}$ be a separation of a locally finite graph~$G$ which efficiently distinguishes a pre-tangle~$P$ and a tangle~$Q$ in~$G$.
    Suppose that~$(A, B) \in Q$.
    Then there exists a tight component~$K$ of~$G - (A \cap B)$ with~$V( K ) \subseteq B \setminus A$.
    In particular, if~$P$ is also a tangle, then~$\{A, B\}$ is tight. \looseness=-1
\end{lemma}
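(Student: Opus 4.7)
The plan is to prove the first assertion by contradiction; the in particular part then follows by applying it to $(B, A) \in P$ once $P$ is known to be a tangle. Suppose no component of $G - X$ contained in $B \setminus A$ is tight, where $X := A \cap B$, and let $\cK$ denote the set of such components, which is finite by the local finiteness of $G$ and the finiteness of $X$. For every nonempty $L \subseteq \cK$ I introduce the compound separations $\{A_L, B_L\} := \{A \cup V(L), B \setminus V(L)\}$, of order $|X|$, and $\{C_L, D_L\} := \{V(G) \setminus V(L), V(L) \cup N_G(L)\}$, of order $|N_G(L)|$.

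I then establish by induction on $|L|$ that both $(A_L, B_L), (D_L, C_L) \in Q$ for every nonempty $L \subseteq \cK$. In the base case $L = \{K\}$, non-tightness gives $|C_K, D_K| < |A, B|$; combined with $(A, B) \leq (C_K, D_K)$ and $(B, A) \in P$, the consistency of $P$ forces $(C_K, D_K) \notin P$, hence $(D_K, C_K) \in P$, and by efficient distinguishing $(D_K, C_K) \in Q$. The triple $(A, B), (B_K, A_K), (D_K, C_K)$ in $Q$ would then violate the tangle property, as their first coordinates cover $G$ edge-by-edge---every edge leaving $V(K)$ terminates in $N_G(K) \subseteq D_K$---so $(B_K, A_K) \notin Q$, i.e., $(A_K, B_K) \in Q$. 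The induction step writes $L = L' \sqcup \{K^*\}$ and applies the tangle property twice: first to $(C_L, D_L), (A_{L'}, B_{L'}), (A_{K^*}, B_{K^*})$ to force $(D_L, C_L) \in Q$, and then to $(A, B), (B_L, A_L), (D_L, C_L)$ to force $(A_L, B_L) \in Q$. Each edge-cover argument rests on the observation that distinct components of $G - X$ share no edges, so any edge from $V(L)$ enters $V(L) \cup N_G(L)$.

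Taking $L = \cK$ gives $(A_\cK, B_\cK) = (V(G), X) \in Q$; three copies of this orientation in $Q$ then satisfy $G[V(G)] \cup G[V(G)] \cup G[V(G)] = G$, directly contradicting the tangle property. This proves the first assertion. For the in particular part, applying the first assertion to the orientation $(B, A) \in P$---now a tangle---produces a tight component in $A \setminus B$, which together with the tight component on the $B$-side gives the tightness of $\{A, B\}$. I expect the main obstacle to be the repeated edge-cover verifications in the tangle-property applications: while each is a routine check, careful bookkeeping of which separations must already be known to lie in $Q$ before each invocation is essential for the induction to close up cleanly.
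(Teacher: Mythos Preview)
The paper does not give its own proof of this lemma; it simply cites \cite{InfiniteSplinters}*{Lemma~6.1 and its proof}, so there is nothing in the paper to compare your argument against directly.

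Your argument is sound and follows the standard route: show that $Q$ must point into some component on the $B$-side, and then use efficiency to force that component to be tight. Your inductive packaging over all nonempty $L \subseteq \cK$ is more elaborate than strictly necessary---once you have $(D_K,C_K)\in Q$ for a single $K$ you could already derive a contradiction to efficiency if $K$ were non-tight, without ever touching the $(A_L,B_L)$---but the edge-cover verifications you outline all go through exactly as you describe.

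One small gap: your claim that $\cK$ is finite ``by the local finiteness of $G$ and the finiteness of $X$'' only follows if every $K\in\cK$ satisfies $N_G(K)\neq\emptyset$, which requires each such $K$ to lie in a connected component of $G$ meeting $X$. The lemma as stated does not assume $G$ connected. This is harmless in all the paper's applications (where $G$ is always connected), and isolated components of $G$ inside $B\setminus A$ can in any case be absorbed by the same consistency-plus-efficiency argument you use in your base case (their component separation has order $0<|X|$); but you should either add the hypothesis or say a word about this.
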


\begin{proof}[Proof of \cref{main:Technical}]
    Each separation in the tree of tangles~$N$ efficiently distinguishes some two tangles in~$G$ and is hence tight by~\cref{lem:EfficientDistinguishersAreTight}.
    Since every tangle in~$G$ is also a pre-tangle in~$G$, \cref{thm:ThickEnd} implies that the unique end 
    in the closure of~$A \cap B$ is thick, as desired.
\end{proof}

\begin{proof}[Proof of \cref{main:Theorem}]
    By~\cref{thm:ToT}, there exists a tree of tangles~$N$ of~$G$.
    Since~$G$ does not have any thick end, \cref{main:Technical} implies that every strictly increasing sequence in~$\vN$ is exhaustive, so~$N$ is itself exhaustive.
    Therefore, $N$ induces a canonical tree-decomposition of~$G$ by~\cref{lem:ExhaustiveImpliesTreeDecomp}, as all the separations in~$N$ are tight and hence proper.
    This tree-decomposition of~$G$ then efficiently distinguishes all distinguishable tangles in~$G$ by construction and hence is as desired.
\end{proof}

If one considers, instead of tangles, the more general `profiles' as introduced in~\cite{ProfilesNew}, then there is a direct generalisation of~\cref{thm:ToT}:
Elbracht, Kneip and Teegen showed in~\cite{InfiniteSplinters}*{Theorem 6.6} that for a robust set~$\cP$ of regular profiles in a connected locally finite graph, there is a canonical nested of~$\cP$-relevant separations which efficiently distinguishes~$\cP$.
Since profiles are also pre-tangles and every tangle, in particular the ones induced by ends, are profiles, one can then obtain direct analogues of~\cref{main:Theorem} and~\cref{main:Technical} for sets~$\cP$ of profiles with the same proofs.

\section{Infinitely many rays beyond the limit} \label{app:InfRaysBeyondLimit}

In this section we prove a structurally refined version of~\cref{thm:ThickEnd}:
while~\cref{thm:ThickEnd} asserts that the (unique) end~$\omega$ of~$G$ 
in the closure of the separator of the limit separation~$(A, B)$ is thick, we here show in~\cref{prop:ThicknessBeyondLimit} that there are infinitely many disjoint rays in~$G[B \setminus A]$ which are contained in the end~$\omega$; in other words, these rays witness the thickness of~$\omega$ `beyond' the limit separation~$(A, B)$.

\begin{proposition} \label{prop:ThicknessBeyondLimit}
    Let~$\cP$ be a set of pre-tangles in a connected locally finite graph~$G$, and let~$N$ be a nested set of~$\cP$-relevant tight separations of~$G$ which efficiently distinguishes~$\cP$.
    Let~$((A_i, B_i))_{i \in \N}$ be a strictly increasing sequence in~$\vN$, and let~$(A, B)$ be its limit.

    If~$((A_i, B_i))_{i \in \N}$ is non-exhaustive, then the unique end~$\omega$ of~$G$ which is 
    in the closure of~$A \cap B$ contains an infinite family of disjoint rays in~$G[B \setminus A]$.
\end{proposition}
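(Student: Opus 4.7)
The plan is to apply the interlacing theorem \cref{lem:ProfileInterlacing} to $((A_i, B_i))_{i \in \N}$ and obtain an interlaced strictly increasing sequence $((A_i', B_i'))_{i \in \N}$ in $\vN$ with strictly increasing orders $k_i := |A_i', B_i'|$, together with a sequence $(P_i)_{i \in \N}$ of pre-tangles in $\cP$ such that each $\{A_i', B_i'\}$ efficiently distinguishes $P_i$ from every $P_j$ with $j > i$. By \cref{lem:InterlacingLemma}, the new sequence still has limit $(A, B)$ and is thus non-exhaustive, so that \cref{lem:UniqueEnd} (with the implicit additional assumption that $\cP$ contains the pre-tangles induced by ends in the closure of $A \cap B$, as in \cref{thm:ThickEnd}) yields the unique end $\omega$ of $G$ in the closure of $A \cap B$. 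Writing $P_\omega$ for the pre-tangle induced by $\omega$, the argument in the proof of \cref{thm:ThickEnd} shows that each $\{A_i', B_i'\}$ efficiently distinguishes $P_i$ and $P_\omega$.

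Next I would analyse the end-components. For each $i$, let $S_i := A_i' \cap B_i'$ and let $C_i$ be the component of $G - S_i$ containing a tail of every ray in $\omega$, so $C_i \subseteq B_i' \setminus A_i'$. Considering the separation $(V(G) \setminus C_i, C_i \cup N_G(C_i))$, which distinguishes $P_i$ and $P_\omega$, the efficiency of $\{A_i', B_i'\}$ forces $|N_G(C_i)| \geq |S_i|$ and hence $N_G(C_i) = S_i$, so that the end-component $C_i$ is tight. Moreover, the $C_i$ are nested: for $i < j$ the set $C_j \subseteq B_j' \setminus A_j' \subseteq B_i' \setminus A_i'$ is connected, disjoint from $S_i$, and contains a tail of every ray in $\omega$, so it lies entirely in the end-component $C_i$ of $G - S_i$. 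Since any vertex lying in every $C_i$ lies in every $B_i' \setminus A_i'$, we have $\bigcap_i C_i \subseteq B \setminus A$, so it suffices to produce an infinite family of pairwise disjoint rays in $\omega$ inside $G[\bigcap_i C_i]$.

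For this final construction, I would exploit that $k_i$ is the minimum order of a separation distinguishing $P_i$ from $P_\omega$, so that Menger's theorem inside $G[C_i \cup S_i]$ yields $k_i$ pairwise disjoint paths from $S_i$ reaching arbitrarily deep into the nested subcomponents $C_j$ with $j > i$; combined with the fact that $\omega$ is thick (as already established in \cref{thm:ThickEnd}), this gives, for each $n$, arbitrarily many pairwise disjoint rays in $\omega$ inside $G[C_n]$. A König's lemma argument, applied to the finitely-branching tree whose nodes are consistent finite initial segments of such disjoint-path configurations and whose finite branching is guaranteed by local finiteness of $G$, then extracts the desired infinite family of pairwise disjoint rays in $\omega \cap G[\bigcap_i C_i] \subseteq G[B \setminus A]$. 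The main obstacle I expect is the bookkeeping for this König extraction: the path-extensions must remain simultaneously compatible across all nested tight components $C_i$ while the number of disjoint ray-candidates grows without bound with $i$, and this is where the nestedness of the $C_i$ together with $k_i \to \infty$ must be leveraged carefully.
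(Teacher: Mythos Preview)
Your setup through the first two paragraphs matches the paper's: apply \cref{lem:ProfileInterlacing}, pass to the interlaced sequence with strictly increasing orders, invoke the claim from the proof of \cref{thm:ThickEnd} to get that each $\{A_i',B_i'\}$ efficiently distinguishes $P_i$ from $P_\omega$, and observe that $\bigcap_i C_i \subseteq B\setminus A$. Your tightness argument for the end-components $C_i$ is correct, though the paper does not introduce the $C_i$ explicitly.

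The gap is in your final step. You propose a single K\"onig extraction that produces an \emph{infinite} family of disjoint $\omega$-rays directly, with the number of ray-candidates growing along the tree. That is exactly where the argument breaks: K\"onig's lemma needs a tree that is locally finite and has nonempty levels, and there is no natural way to organise ``growing families of disjoint initial segments'' into such a tree while keeping both the parent relation and the finite branching honest. Local finiteness of $G$ controls the number of extensions of a \emph{fixed} finite family of paths, not of families whose cardinality increases from level to level; and the start vertices of new rays have no canonical choice compatible with a tree structure. You flag this yourself as the main obstacle, and it is a real one rather than mere bookkeeping.

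The paper avoids this entirely by inserting a reduction you are missing: it first argues (via Halin's ubiquity of rays, applied inside an end of $G[B\setminus A]$ after a pigeonhole over the finitely or infinitely many such ends meeting $\omega$) that it suffices to find, for every fixed $k$, a family of $k$ disjoint $\omega$-rays in $G[B\setminus A]$. With $k$ fixed, one chooses a $k$-set $Z\subseteq A\cap B$, uses the end-version of Menger to get $|A_i'\cap B_i'|$ disjoint $(A_i'\cap B_i')$--$\omega$ rays for each large $i$, restricts to the $k$ rays starting in $Z$, and then runs a completely standard K\"onig argument on families of $k$ paths of length $\ell$ starting in $Z$---finite branching is now immediate from local finiteness. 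The resulting $k$ rays are shown to lie in $G[B\setminus A]$ (except for their startvertices in $Z$) and to belong to $\omega$. So the missing idea in your plan is precisely this ``fix $k$ first, then K\"onig, then Halin'' decoupling.
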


As preparation for the proof, we remark that following the proof of the ubiquity of rays~\cite{halin65}*{Satz~1} by Halin one easily obtains the following (see also \cite{DiestelBook16}*{Theorem~8.2.5 \& Exercise~43}).
\begin{theorem}\label{ubiquityofrays}
    An end $\omega$ in a graph $G$ contains infinitely many disjoint rays, if it contains $k$ many disjoint rays for every $k \in \N$. \qed
\end{theorem}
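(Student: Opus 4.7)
The plan is to carry out the structural analysis foreshadowed in the paragraph after \cref{thm:ThickEnd}, explicitly constructing the infinite family of disjoint rays in $G[B\setminus A]$ instead of deriving a contradiction from thinness.

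I begin exactly as in the proof of \cref{thm:ThickEnd}. Applying the Interlacing Theorem (\cref{lem:ProfileInterlacing}), I pass to an interlaced strictly increasing sequence $((A_i',B_i'))_{i\in\N}$ in $\vN$ with strictly increasing orders and an accompanying sequence $(P_i)_{i\in\N}$ in $\cP$ such that each $\{A_i',B_i'\}$ efficiently distinguishes $P_i$ from $P_\omega$; here $\omega$ is the unique end in the closure of $A\cap B$ provided by \cref{lem:UniqueEnd}, and $P_\omega$ is its induced pre-tangle. By \cref{lem:InterlacingLemma}, both sequences share the limit $(A,B)$.

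Next I isolate a nested sequence of tight $\omega$-components. For each $i$, let $K_i$ be the component of $G-(A_i'\cap B_i')$ with $V(K_i)\subseteq B_i'\setminus A_i'$ containing tails of rays in $\omega$. I would first show that $K_i$ is tight, i.e.\ $N_G(K_i)=A_i'\cap B_i'$: otherwise the separation $(V(G)\setminus V(K_i),\,V(K_i)\cup N_G(K_i))$ has order $|N_G(K_i)|<|A_i',B_i'|$, lies in $P_\omega$ on its second side, and, by consistency of $P_i$ with $(B_i',A_i')$, has its reverse orientation in $P_i$; this distinguishes $P_i$ from $P_\omega$ strictly more efficiently than $\{A_i',B_i'\}$, a contradiction. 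Since $V(K_{i+1})\subseteq B_{i+1}'\setminus A_{i+1}'\subseteq B_i'\setminus A_i'$ and $K_{i+1}$ contains tails of $\omega$-rays, connectivity forces $V(K_{i+1})\subseteq V(K_i)$. The crucial inclusion is
\begin{equation*}
    \bigcap_{i\in\N}V(K_i)\subseteq\bigcap_{i\in\N}(B_i'\setminus A_i')=V(G)\setminus\bigcup_{i\in\N}A_i'=B\setminus A,
\end{equation*}
so any ray whose tail eventually enters every $V(K_i)$ becomes, after truncation, a ray in $G[B\setminus A]$.

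The heart of the argument is then a recursive Menger construction on this onion of nested tight components, carried out so as to keep the paths in $B\setminus A$. For each $k\in\N$, I choose $i_0$ with $|N_G(K_{i_0})|\geq k$; invoking \cref{lem:limitsepispseudotight}, I pick $k$ vertices of $N_G(K_{i_0})$ with $k$ distinct neighbours in $V(K_{i_0})\cap (B\setminus A)$, to serve as anchors. I then iteratively extend $k$ vertex-disjoint paths through the $(B\setminus A)$-part of each annulus $V(K_{i_\ell})\setminus V(K_{i_{\ell+1}})$ until reaching $k$ corresponding anchors at stage $\ell+1$, for a suitably chosen index $i_{\ell+1}>i_\ell$. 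The main obstacle is verifying that Menger's theorem actually furnishes these extensions at each stage; the plan is to argue that any $(k-1)$-vertex cut in the annular $(B\setminus A)$-subgraph could be completed, together with suitable vertices of $A\cap B$, into a proper separation of $G$ of order less than $|A_{i_\ell}',B_{i_\ell}'|$ that still distinguishes $P_{i_\ell}$ from $P_\omega$, contradicting efficient distinction. Concatenating the path segments yields $k$ vertex-disjoint rays in $\omega$ lying entirely inside $B\setminus A$.

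Since the resulting rays all funnel through the same nested sets $V(K_i)\cap(B\setminus A)$, they represent a common end $\omega^*$ of the locally finite subgraph $G[B\setminus A]$. Applying \cref{ubiquityofrays} to $G[B\setminus A]$ and $\omega^*$ finally produces an infinite family of vertex-disjoint rays in $G[B\setminus A]$ belonging to $\omega^*$, each lying in $\omega$ when viewed as a ray of $G$, thereby establishing the proposition.
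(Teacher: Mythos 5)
You have proved the wrong statement. The theorem labelled \cref{ubiquityofrays} is Halin's ubiquity theorem for ends and is a fully general assertion: for \emph{any} graph $G$ and \emph{any} end $\omega$ of $G$, if $\omega$ contains $k$ disjoint rays for every $k \in \N$, then it contains infinitely many disjoint rays. Its hypotheses mention no limit separation $(A,B)$, no nested set $N$, no sequence $((A_i,B_i))_{i\in\N}$, and no set $\cP$ of pre-tangles. Your proposal, however, opens by applying the Interlacing Theorem (\cref{lem:ProfileInterlacing}), isolates tight $\omega$-components $K_i$ relative to $((A_i',B_i'))_{i\in\N}$, steers the rays into $G[B\setminus A]$, and closes with ``thereby establishing the proposition''. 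What you have sketched is a proof of \cref{prop:ThicknessBeyondLimit}, not of \cref{ubiquityofrays}. Moreover the argument is circular with respect to the assigned statement: in your final paragraph you apply \cref{ubiquityofrays} itself as a black box to pass from `$k$ disjoint rays in $\omega^*$ for every $k$' to `infinitely many disjoint rays in $\omega^*$'. That invocation is illegitimate if \cref{ubiquityofrays} is what you are asked to prove.

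For reference, the paper gives no proof of \cref{ubiquityofrays}; it is stated with a terminal $\square$ and attributed to Halin's ubiquity-of-rays argument \cite{halin65}*{Satz~1}, with pointers to \cite{DiestelBook16}*{Theorem~8.2.5 \& Exercise~43}. A self-contained proof would follow Halin's recursion: maintain, for each $n$, a family of $n$ pairwise disjoint finite initial segments, each extendable to an $\omega$-ray and all of length at least $n$; to pass from stage $n$ to $n+1$, take the $n+1$ disjoint $\omega$-rays given by the hypothesis, discard their beginnings so that they lie beyond the current finite configuration, and reroute them along the existing segments to obtain $n+1$ disjoint, longer initial segments of $\omega$-rays extending the previous ones; a König-type limiting argument then produces infinitely many disjoint $\omega$-rays. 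None of this machinery (nor anything equivalent) appears in your proposal, so the gap is not a repairable detail but a misdirected target.
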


\begin{proof}[Proof of \cref{prop:ThicknessBeyondLimit}]
    We first proceed as in the proof of~\cref{thm:ThickEnd} to obtain a strictly increasing sequence~$((A_i', B_i'))_{i \in \N}$ in~$\vN$ with the same limit~$(A, B)$ and a corresponding sequence~$(P_i)_{i \in \N}$ in~$\cP$ satisfying \cref{item:Orientation} and~\cref{item:DistinguishingSequence}.
    Moreover, the~$(A_i', B_i')$ have strictly increasing order and, by the claim, every~$(A_i', B_i')$ efficiently distinguishes~$P_i$ and~$P_\omega$, the pre-tangle in~$G$ induced by~$\omega$.

    We claim that it suffices that there are~$k$~disjoint rays in~$G[B \setminus A]$ which belong to~$\omega$ in~$G$ for every~$k \in \N$.
    Indeed, let $Y$ be the set of all ends of $G[B \setminus A]$ which contain a ray in $\omega$.
    If $Y$ is infinite, then one greedily finds the desired infinitely many disjoint rays in $G[B\setminus A]$ in $\omega$.
    Otherwise $Y$ is finite.
    Thus, some end $\omega' \in Y$ contains $k$ disjoint rays for every $Y$ by the pigeonhole principle.
    Hence by \cref{ubiquityofrays}, $\omega'$ contains infinitely many disjoint rays.
    These are also contained in $\omega$ by the definition of $Y$.

    So consider some fixed~$k \in \N$.
    Let~$X:= A \cap B$, and fix any set~$Z \subseteq X$ of size~$k$.
    By~\cref{lem:PushingLemma}, there exists~$I \in \N$ such that~$Z \subseteq X_i := A'_i \cap B'_i$ for all~$i \ge I$.
    Since every~$\{A_i', B_i'\}$ efficiently distinguishes~$P_i$ from~$P_\omega$, no~$X_i$--$\omega$ separator of~$G$ of size less than~$|X_i|$ exists.
    Thus, it follows from~\cite{eme}*{Theorem~1.1} that there is a family~$\cR_i$ of~$|X_i|$ many disjoint~$X_i$--$\omega$~rays in~$G$ for every~$i \in \N$.
    We remark that every vertex in~$X_i$ is the starting vertex of some ray in~$\cR_i$.
    For every~$i \ge I$, we fix the subfamily~$\cR'_i$ of~$\cR_i$ containing precisely those rays starting in~$Z$.

    For every~$\ell \in \N$, let~$\cM^\ell$ be the set consisting of those families of~$k$ paths of length~$\ell$ which appear jointly as initial segments of the rays in~$\cR'_i$ for infinitely many~$i \geq I$; note that the paths in such families are disjoint and start in~$Z$.
    Since the graph~$G$ is locally finite, there are only finitely many paths of length~$\ell$ in~$G$ which start in~$Z$.
    To see that~$\cM^\ell$ is non-empty, we can thus apply the pigeonhole principle to the infinite collection of families of~$k$ paths of length~$\ell$ starting in~$Z$ which are given by the initial segments of the rays in the~$\cR_i'$.
    
    Given any family~$\{R_1, \dots, R_k\} \in \cM^\ell$, we have~$\{R_1', \dots, R_k'\} \in \cM^{\ell-1}$ where~$R_i'$ is the subpath of~$R_i$ of length~$\ell-1$ which still starts in~$Z$.
    Thus, König's infinity lemma~\cite{InfLemma}*{VI. Kapitel~\S\,2,~Satz~6} (see also \cite{DiestelBook16}*{Lemma~8.1.2})
    applied to~$(\cM^\ell)_{\ell \in \N}$ yields for every~$\ell \in \N$ a family~$\{R^\ell_1, \dots, R^\ell_k\} \in \cM^\ell$ such that~$R^{\ell-1}_i$ is the subpath of~$R^\ell_i$ of length~$\ell-1$ starting in~$Z$ for~$i = 1, \dots, k$.
    Let~$R_i$ be the ray~$\bigcup_{\ell \in \N} R^\ell_i$ for~$i = 1, \dots, k$.
    These rays~$R_i$ are disjoint as the~$R^\ell_i$ are disjoint for every~$\ell \in \N$ by the definition of~$\cM^\ell$.
    We claim that~$R_1, \dots, R_k$ are as desired, i.e.\ a family of~$k$ disjoint rays in~$\omega$ which are contained in~$G[B \setminus A]$ (except their startvertices).
    
    First, $R_1, \dots, R_k$ are contained in~$G[B \setminus A]$ except their startvertices which are in~$Z \subseteq A \cap B$.
    Indeed, let~$v$ be a vertex of some~$R_i$ which is not its startvertex.
    By the definition of~$R_i$ and~$(\cM^\ell)_{\ell \in \N}$, $v$ is contained in a ray in~$\cR_i$ for infinitely many~$i \geq I$. 
    All the rays in~$\cR_i$ avoid~$X_i$ except in their startvertices.
    Hence, every ray in~$\cR_i$ is contained in~$G[B'_i \setminus A'_i]$ except its startvertex, since~$\omega$ is 
    in the closure of~$A \cap B \subseteq B'_i$. 
    Altogether, $v$ is contained in~$B_i \setminus A_i$ for infinitely many~$i \geq I$ and thus in~$B \setminus A$ by \cref{lem:PushingLemma}.

    It remains to show that~$R_1, \dots, R_k$ belong to the end~$\omega$ of~$G$.
    Consider any~$R := R_i$, and let~$Q$ be an arbitrary ray in~$\omega$.
    We now inductively exhibit infinitely many disjoint paths in~$G$ joining~$R$ and~$Q$ which then yields that they are in the same end of~$G$ and, in particular, that $R \in \omega$.
    Every vertex in~$R \cap Q$ is a trivial path joining~$R$ and~$Q$.
    So if~$R \cap Q$ is infinite, then we are done.
    Suppose that~$R \cap Q$ is finite.

    Let us assume that we have already found disjoint paths~$P_1, \dots, P_j$ joining~$R$ and~$Q$ for some~$j \in \N$.
    We seek an additional path~$P_{j+1}$ joining~$R$ and~$Q$ which is disjoint from all the previous~$P_i$.
    Since~$P_1, \dots, P_j$ are finite paths, there are finite initial segments~$R^*$ and~$Q^*$ of~$R$ and~$Q$, respectively, such that~$R - R^*$ and~$Q - Q^*$ are both disjoint from~$P_1, \dots, P_j$.
    By the construction of the~$R_i$, the initial segment of~$R$ which consists of~$R^*$ together with the following edge is also the initial segment of some ray~$Q_{j+1} \in \omega$.
    Since both $Q$ and $Q_{j+1}$ are in $\omega$, there are infinitely many disjoint~$Q$--$Q_{j+1}$~paths, and we especially find one such path~$P_{j+1}'$ which is disjoint from both~$R^*$ and~$Q^*$ as well as the paths~$P_1, \dots, P_j$.
    We then let~$P_{j+1}$ be the (unique) path 
    which first follows~$Q_{j+1} - R^*$ and then~$P_{j+1}'$.
    This path~$P_{j+1}$ starts in~$R$ and ends in~$Q$ by the choice of~$Q_{j+1}$ and~$P_{j+1}'$, and it is disjoint from~$P_1, \dots, P_j$ by construction, as desired.
    Iterating this recursive construction yields the desired set of infinitely many disjoint~paths starting in~$R$ and ending in~$Q$.
\end{proof}

A natural strengthening of~\cref{prop:ThicknessBeyondLimit} would assert that the infinitely many rays in~$G[B \setminus A]$ not only belong to the same end~$\omega$ of~$G$, but even to the same end of~$G[B \setminus A]$.
This, however, does not hold as the following example demonstrates; this example is an adaption of~\cite{InfiniteSplinters}*{Example~4.9}.

\begin{example} \label{ex:RaysNotEquivalent}
    We construct a graph~$G$ (see \cref{fig:example}) as follows.

    \begin{figure}[ht]
        \centering
        \includegraphics{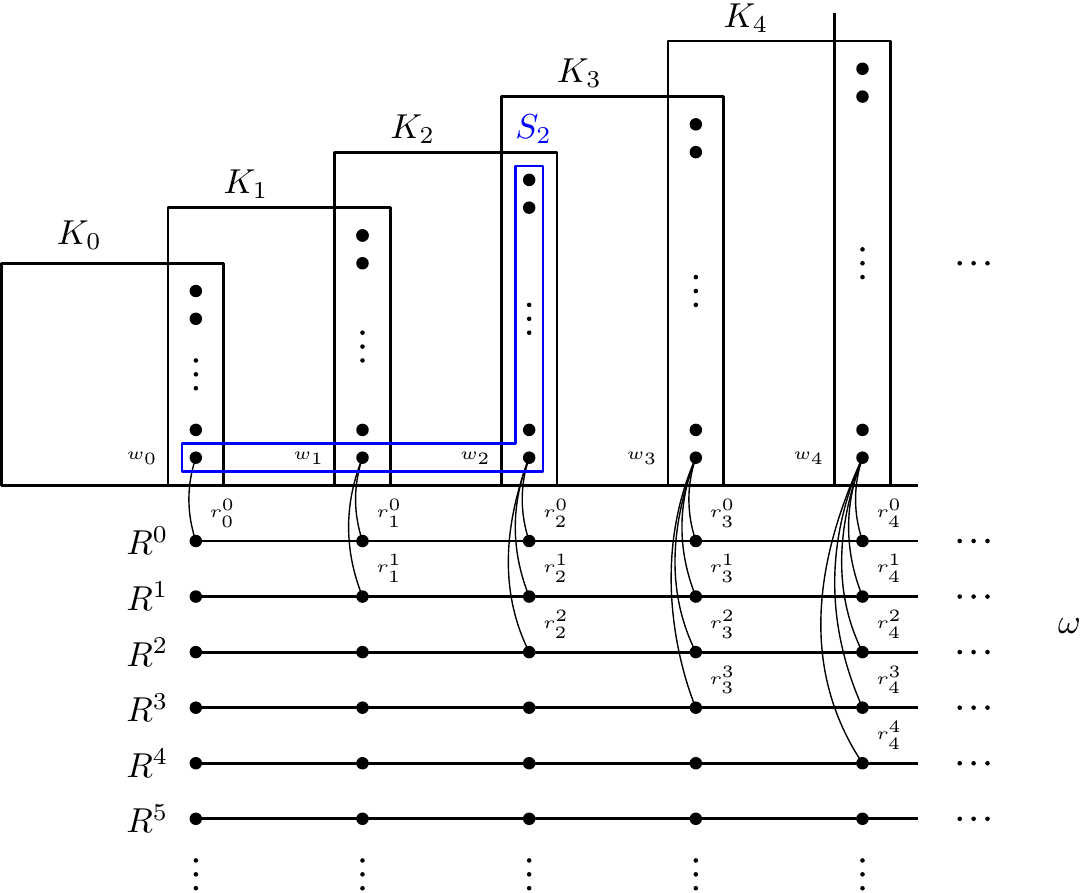}
        \caption{The graph $G$ from \cref{ex:RaysNotEquivalent}}
        \label{fig:example}
    \end{figure}
    
    For every~$n \in \N$, we pick a copy $K_n$ of the complete graph~$K^{2^{n+4}}$
    on~$2^{n+4}$
    vertices.
    In this~$K^{2^{n+4}}$, 
    we fix~$2^n$ vertices~$u_1^n, \dots, u_{2^n}^n$ and~$2^{n+1}$ vertices~$v_1^n, \dots, v_{2^{n+1}}^n$ all of which are distinct.
    We then identify~$u_i^{n+1}$ with~$v_i^n$ for all~$n \in \N$ and $i \leq 2^{n+1}$.
    Furthermore, we pick a ray~$R^n = r_0^n r_1^n r_2^n \dots$ for every~$n \in \N$ such that these rays are disjoint from each other and from the graph constructed before.
    Then for every~$m \in \N$ and~$n \le m$, we add an edge joining~$r_m^n$ and~$w^m := u_1^{m+1} = v_1^m$.
    This completes the construction of the graph~$G$.

    Each of the~$K_n$
    induces a $2^{n+2}$-tangle~$P_n$ in~$G$.
    It is easy to check that the tight separation~$s_n$ with separator~$S_n := \{v_1^i \mid i < n\} \cup \{v_j^n \mid j \le 2^{n+1}\}$ efficiently distinguishes~$P_n$ and~$P_{n+1}$.
    Moreover, the nested set~$N = \{s_n \mid n \in \N\}$ efficiently distinguishes the set~$\cP = \{P_n \mid n \in \N\}$ of (pre-)tangles in~$G$.
    
    Now let~$\vs_n$ be the orientation of~$s_n$ with~$\vs_n \in P_{n+1}$.
    Then~$(\vs_n)_{n \in \N}$ is a strictly increasing sequence in~$\vN$, and we write~$\vs = (A, B)$ for its limit separation.
    We have~$B \setminus A = \bigcup_{n \in \N} V(R^n)$.
    Hence, the sequence~$(\vs_n)_{n \in \N}$ is non-exhaustive, and~$G[B \setminus A]$ is the disjoint union of the rays~$R^n$.
    In particular, $G[B \setminus A]$ does not contain an infinite family of equivalent rays.
\end{example}

The above~\cref{ex:RaysNotEquivalent} can be adapted to show that there need not even exist infinitely many rays in~$G[B \setminus A]$ which are equivalent in~$G[B]$.
Instead of joining the rays $R_n$ as described above, we join each $R_n$ to the set $A \cap B = \{v^0_1, v^1_1, v^2_1, \dots\}$ cofinally in such a way that each $v^i_1$ is joined to precisely $i$ of the rays and every two rays~$R^n$ and~$R^m$ share only finitely many neighbours in~$A \cap B$. 
Now $s_n$ chosen as above is again a tight separation which efficiently distinguishes the tangles $P_n$ and $P_{n+1}$ induced by the respective copies of $K^{2^{n+4}}$ and $K^{2^{(n+1)+4}}$.
Now every two disjoint rays in $G[B \setminus A]$ are not equivalent, as desired.
We omit the detailed construction here.

\section*{Acknowledgements}

\noindent We thank Jan Kurkofka for an insightful discussion concerning the proof of~\cref{lem:LimitSepHasInfiniteOrder} and Nathan Bowler for his idea how to prove~\cref{prop:ThicknessBeyondLimit}. 
We also thank the reviewers for their valuable comments that greatly improved the presentation of some arguments and fixed a few mistakes. Especially, we are grateful for Reviewer 1's shorter and more intuitive proof of \cref{lem:FiniteOrderCrossLimit} and Reviewer 2's input regarding the presentation of \cref{app:InfRaysBeyondLimit}.

The first author gratefully acknowledges support by doctoral scholarships of the Studienstiftung des deutschen Volkes and the Cusanuswerk -- Bisch\"{o}fliche Studienf\"{o}rderung.
The second author gratefully acknowledges support by a doctoral scholarship of the Studienstiftung des deutschen Volkes.

\bibliographystyle{amsplain}
\bibliography{collective_adapted}

\end{document}